\documentclass[11pt]{article}
\usepackage{amsmath}
\usepackage{amssymb}
\usepackage{amsfonts}
\usepackage{hyperref}
\usepackage[dvips]{graphicx}

\setcounter{MaxMatrixCols}{10}

\newtheorem{theorem}{Theorem}

\newtheorem{axiom}[theorem]{Axiom}

\newtheorem{conjecture}[theorem]{Conjecture}
\newtheorem{corollary}[theorem]{Corollary}

\newtheorem{definition}[theorem]{Definition}
\newtheorem{example}[theorem]{Example}
\newtheorem{exercise}[theorem]{Exercise}
\newtheorem{lemma}[theorem]{Lemma}

\newtheorem{proposition}[theorem]{Proposition}
\newtheorem{remark}[theorem]{Remark}

\newenvironment{proof}[1][Proof]{\noindent\textbf{#1.} }{\ \rule{0.5em}{0.5em}}
\renewcommand{\theequation}{\thesection.\arabic{equation}}

\typeout{TCILATEX Macros for Scientific Word 5.0 <13 Feb 2003>.}
\typeout{NOTICE:  This macro file is NOT proprietary and may be 
freely copied and distributed.}
\makeatletter

\ifx\pdfoutput\relax\let\pdfoutput=\undefined\fi
\newcount\msipdfoutput
\ifx\pdfoutput\undefined
\else
 \ifcase\pdfoutput
 \else 
    \msipdfoutput=1
    \ifx\paperwidth\undefined
    \else
      \ifdim\paperheight=0pt\relax
      \else
        \pdfpageheight\paperheight
      \fi
      \ifdim\paperwidth=0pt\relax
      \else
        \pdfpagewidth\paperwidth
      \fi
    \fi
  \fi  
\fi

%

%
\newcount\@hour\newcount\@minute\chardef\@x10\chardef\@xv60
\def\tcitime{
\def\@time{%
  \@minute\time\@hour\@minute\divide\@hour\@xv
  \ifnum\@hour<\@x 0\fi\the\@hour:%
  \multiply\@hour\@xv\advance\@minute-\@hour
  \ifnum\@minute<\@x 0\fi\the\@minute
  }}%


\def\x@hyperref#1#2#3{%
   \catcode`\~ = 12
   \catcode`\$ = 12
   \catcode`\_ = 12
   \catcode`\# = 12
   \catcode`\& = 12
   \y@hyperref{#1}{#2}{#3}%
}

\def\y@hyperref#1#2#3#4{%
   #2\ref{#4}#3
   \catcode`\~ = 13
   \catcode`\$ = 3
   \catcode`\_ = 8
   \catcode`\# = 6
   \catcode`\& = 4
}

\@ifundefined{hyperref}{\let\hyperref\x@hyperref}{}
\@ifundefined{msihyperref}{\let\msihyperref\x@hyperref}{}

\@ifundefined{qExtProgCall}{\def\qExtProgCall#1#2#3#4#5#6{\relax}}{}
%
%
%
%
\def\QCTOpt[#1]#2{%
  \def\QCTOptB{#1}
  \def\QCTOptA{#2}
}
\def\QCTNOpt#1{%
  \def\QCTOptA{#1}
  \let\QCTOptB\empty
}
\def\Qct{%
  \@ifnextchar[{%
    \QCTOpt}{\QCTNOpt}
}
\def\QCBOpt[#1]#2{%
  \def\QCBOptB{#1}%
  \def\QCBOptA{#2}%
}
\def\QCBNOpt#1{%
  \def\QCBOptA{#1}%
  \let\QCBOptB\empty
}
\def\Qcb{%
  \@ifnextchar[{%
    \QCBOpt}{\QCBNOpt}%
}
\def\PrepCapArgs{%
  \ifx\QCBOptA\empty
    \ifx\QCTOptA\empty
      {}%
    \else
      \ifx\QCTOptB\empty
        {\QCTOptA}%
      \else
        [\QCTOptB]{\QCTOptA}%
      \fi
    \fi
  \else
    \ifx\QCBOptA\empty
      {}%
    \else
      \ifx\QCBOptB\empty
        {\QCBOptA}%
      \else
        [\QCBOptB]{\QCBOptA}%
      \fi
    \fi
  \fi
}
\newcount\GRAPHICSTYPE
\GRAPHICSTYPE=\z@
\def\GRAPHICSPS#1{%
 \ifcase\GRAPHICSTYPE
   \special{ps: #1}%
 \or
   \special{language "PS", include "#1"}%
 \fi
}%
%
%
%

\def\graffile#1#2#3#4{%
    \bgroup
	   \@inlabelfalse
       \leavevmode
       \@ifundefined{bbl@deactivate}{\def~{\string~}}{\activesoff}%
        \raise -#4 \BOXTHEFRAME{%
           \hbox to #2{\raise #3\hbox to #2{\null #1\hfil}}}%
    \egroup
}%
%
\def\draftbox#1#2#3#4{%
 \leavevmode\raise -#4 \hbox{%
  \frame{\rlap{\protect\tiny #1}\hbox to #2%
   {\vrule height#3 width\z@ depth\z@\hfil}%
  }%
 }%
}%
\newcount\@msidraft
\@msidraft=\z@
\let\nographics=\@msidraft
\newif\ifwasdraft
\wasdraftfalse

\def\GRAPHIC#1#2#3#4#5{%
   \ifnum\@msidraft=\@ne\draftbox{#2}{#3}{#4}{#5}%
   \else\graffile{#1}{#3}{#4}{#5}%
   \fi
}
\def\addtoLaTeXparams#1{%
    \edef\LaTeXparams{\LaTeXparams #1}}%
%

\newif\ifBoxFrame \BoxFramefalse
\newif\ifOverFrame \OverFramefalse
\newif\ifUnderFrame \UnderFramefalse

\def\BOXTHEFRAME#1{%
   \hbox{%
      \ifBoxFrame
         \frame{#1}%
      \else
         {#1}%
      \fi
   }%
}

\def\doFRAMEparams#1{\BoxFramefalse\OverFramefalse\UnderFramefalse\readFRAMEparams#1\end}%
\def\readFRAMEparams#1{%
 \ifx#1\end%
  \let\next=\relax
  \else
  \ifx#1i\dispkind=\z@\fi
  \ifx#1d\dispkind=\@ne\fi
  \ifx#1f\dispkind=\tw@\fi
  \ifx#1t\addtoLaTeXparams{t}\fi
  \ifx#1b\addtoLaTeXparams{b}\fi
  \ifx#1p\addtoLaTeXparams{p}\fi
  \ifx#1h\addtoLaTeXparams{h}\fi
  \ifx#1X\BoxFrametrue\fi
  \ifx#1O\OverFrametrue\fi
  \ifx#1U\UnderFrametrue\fi
  \ifx#1w
    \ifnum\@msidraft=1\wasdrafttrue\else\wasdraftfalse\fi
    \@msidraft=\@ne
  \fi
  \let\next=\readFRAMEparams
  \fi
 \next
 }%
%

\def\IFRAME#1#2#3#4#5#6{%
      \bgroup
      \let\QCTOptA\empty
      \let\QCTOptB\empty
      \let\QCBOptA\empty
      \let\QCBOptB\empty
      #6%
      \parindent=0pt
      \leftskip=0pt
      \rightskip=0pt
      \setbox0=\hbox{\QCBOptA}%
      \@tempdima=#1\relax
      \ifOverFrame
          \typeout{This is not implemented yet}%
          \show\HELP
      \else
         \ifdim\wd0>\@tempdima
            \advance\@tempdima by \@tempdima
            \ifdim\wd0 >\@tempdima
               \setbox1 =\vbox{%
                  \unskip\hbox to \@tempdima{\hfill\GRAPHIC{#5}{#4}{#1}{#2}{#3}\hfill}%
                  \unskip\hbox to \@tempdima{\parbox[b]{\@tempdima}{\QCBOptA}}%
               }%
               \wd1=\@tempdima
            \else
               \textwidth=\wd0
               \setbox1 =\vbox{%
                 \noindent\hbox to \wd0{\hfill\GRAPHIC{#5}{#4}{#1}{#2}{#3}\hfill}\\%
                 \noindent\hbox{\QCBOptA}%
               }%
               \wd1=\wd0
            \fi
         \else
            \ifdim\wd0>0pt
              \hsize=\@tempdima
              \setbox1=\vbox{%
                \unskip\GRAPHIC{#5}{#4}{#1}{#2}{0pt}%
                \break
                \unskip\hbox to \@tempdima{\hfill \QCBOptA\hfill}%
              }%
              \wd1=\@tempdima
           \else
              \hsize=\@tempdima
              \setbox1=\vbox{%
                \unskip\GRAPHIC{#5}{#4}{#1}{#2}{0pt}%
              }%
              \wd1=\@tempdima
           \fi
         \fi
         \@tempdimb=\ht1
         \advance\@tempdimb by -#2
         \advance\@tempdimb by #3
         \leavevmode
         \raise -\@tempdimb \hbox{\box1}%
      \fi
      \egroup%
}%
%
\def\DFRAME#1#2#3#4#5{%
  \vspace\topsep
  \hfil\break
  \bgroup
     \leftskip\@flushglue
	 \rightskip\@flushglue
	 \parindent\z@
	 \parfillskip\z@skip
     \let\QCTOptA\empty
     \let\QCTOptB\empty
     \let\QCBOptA\empty
     \let\QCBOptB\empty
	 \vbox\bgroup
        \ifOverFrame 
           #5\QCTOptA\par
        \fi
        \GRAPHIC{#4}{#3}{#1}{#2}{\z@}%
        \ifUnderFrame 
           \break#5\QCBOptA
        \fi
	 \egroup
  \egroup
  \vspace\topsep
  \break
}%
%
\def\FFRAME#1#2#3#4#5#6#7{%
  \@ifundefined{floatstyle}
    {
     \begin{figure}[#1]%
    }
    {
	 \ifx#1h
      \begin{figure}[H]%
	 \else
      \begin{figure}[#1]%
	 \fi
	}
  \let\QCTOptA\empty
  \let\QCTOptB\empty
  \let\QCBOptA\empty
  \let\QCBOptB\empty
  \ifOverFrame
    #4
    \ifx\QCTOptA\empty
    \else
      \ifx\QCTOptB\empty
        \caption{\QCTOptA}%
      \else
        \caption[\QCTOptB]{\QCTOptA}%
      \fi
    \fi
    \ifUnderFrame\else
      \label{#5}%
    \fi
  \else
    \UnderFrametrue%
  \fi
  \begin{center}\GRAPHIC{#7}{#6}{#2}{#3}{\z@}\end{center}%
  \ifUnderFrame
    #4
    \ifx\QCBOptA\empty
      \caption{}%
    \else
      \ifx\QCBOptB\empty
        \caption{\QCBOptA}%
      \else
        \caption[\QCBOptB]{\QCBOptA}%
      \fi
    \fi
    \label{#5}%
  \fi
  \end{figure}%
 }%
%
%
%
%
%
\newcount\dispkind%

\def\makeactives{
  \catcode`\"=\active
  \catcode`\;=\active
  \catcode`\:=\active
  \catcode`\'=\active
  \catcode`\~=\active
}
\bgroup
   \makeactives
   \gdef\activesoff{%
      \def"{\string"}%
      \def;{\string;}%
      \def:{\string:}%
      \def'{\string'}%
      \def~{\string~}%
    }
\egroup

\def\FRAME#1#2#3#4#5#6#7#8{%
 \bgroup
 \ifnum\@msidraft=\@ne
   \wasdrafttrue
 \else
   \wasdraftfalse%
 \fi
 \def\LaTeXparams{}%
 \dispkind=\z@
 \def\LaTeXparams{}%
 \doFRAMEparams{#1}%
 \ifnum\dispkind=\z@\IFRAME{#2}{#3}{#4}{#7}{#8}{#5}\else
  \ifnum\dispkind=\@ne\DFRAME{#2}{#3}{#7}{#8}{#5}\else
   \ifnum\dispkind=\tw@
    \edef\@tempa{\noexpand\FFRAME{\LaTeXparams}}%
    \@tempa{#2}{#3}{#5}{#6}{#7}{#8}%
    \fi
   \fi
  \fi
  \ifwasdraft\@msidraft=1\else\@msidraft=0\fi{}%
  \egroup
 }%
%

\def\TEXUX#1{"texux"}

%
%
%
%
%
%
%
%
\def\func#1{\mathop{\rm #1}\nolimits}%
%

%
\long\def\QQQ#1#2{%
     \long\expandafter\def\csname#1\endcsname{#2}}%
\@ifundefined{QTP}{\def\QTP#1{}}{}
\@ifundefined{QEXCLUDE}{\def\QEXCLUDE#1{}}{}
\@ifundefined{Qlb}{}{}
\@ifundefined{Qlt}{}{}
\long\def\QQA#1#2{}%
\def\QTR#1#2{{\csname#1\endcsname {#2}}}%
\def\EXPAND#1[#2]#3{}%
\def\NOEXPAND#1[#2]#3{}%
\def\LaTeXparent#1{}%
\def\ChildStyles#1{}%
\def\ChildDefaults#1{}%
\def\QTagDef#1#2#3{}%

\@ifundefined{correctchoice}{}{}
\@ifundefined{HTML}{\def\HTML#1{\relax}}{}
\@ifundefined{TCIIcon}{\def\TCIIcon#1#2#3#4{\relax}}{}
\if@compatibility
  \typeout{Not defining UNICODE  U or CustomNote commands for LaTeX 2.09.}
\else
  \providecommand{\UNICODE}[2][]{\protect\rule{.1in}{.1in}}
  \providecommand{\U}[1]{\protect\rule{.1in}{.1in}}
  
\fi

\@ifundefined{lambdabar}{
      
   }{}

%
\@ifundefined{StyleEditBeginDoc}{}{}
%
\def\QQfnmark#1{\footnotemark}

%
%
\@ifundefined{TCIMAKEINDEX}{}{\makeindex}%
%
\@ifundefined{abstract}{%
 \def\abstract{%
  \if@twocolumn
   \section*{Abstract (Not appropriate in this style!)}%
   \else \small 
   \begin{center}{\bf Abstract\vspace{-.5em}\vspace{\z@}}\end{center}%
   \quotation 
   \fi
  }%
 }{%
 }%
\@ifundefined{endabstract}{\def\endabstract
  {\if@twocolumn\else\endquotation\fi}}{}%
\@ifundefined{maketitle}{\def\maketitle#1{}}{}%
\@ifundefined{affiliation}{\def\affiliation#1{}}{}%
\@ifundefined{proof}{}{}%
\@ifundefined{endproof}{}{}%
\@ifundefined{newfield}{\def\newfield#1#2{}}{}%
\@ifundefined{chapter}{\def\chapter#1{\par(Chapter head:)#1\par }%
 \newcount\c@chapter}{}%
\@ifundefined{part}{\def\part#1{\par(Part head:)#1\par }}{}%
\@ifundefined{section}{\def\section#1{\par(Section head:)#1\par }}{}%
\@ifundefined{subsection}{\def\subsection#1%
 {\par(Subsection head:)#1\par }}{}%
\@ifundefined{subsubsection}{\def\subsubsection#1%
 {\par(Subsubsection head:)#1\par }}{}%
\@ifundefined{paragraph}{\def\paragraph#1%
 {\par(Subsubsubsection head:)#1\par }}{}%
\@ifundefined{subparagraph}{\def\subparagraph#1%
 {\par(Subsubsubsubsection head:)#1\par }}{}%
\@ifundefined{therefore}{}{}%
\@ifundefined{backepsilon}{}{}%
\@ifundefined{yen}{}{}%
\@ifundefined{registered}{%
   \def\registered{\relax\ifmmode{}\r@gistered
                    \else$\m@th\r@gistered$\fi}%
 \def\r@gistered{^{\ooalign
  {\hfil\raise.07ex\hbox{$\scriptstyle\rm\text{R}$}\hfil\crcr
  \mathhexbox20D}}}}{}%
\@ifundefined{Eth}{}{}%
\@ifundefined{eth}{}{}%
\@ifundefined{Thorn}{}{}%
\@ifundefined{thorn}{}{}%
%
\@ifundefined{degree}{}{}%
%
\newdimen\theight
\@ifundefined{Column}{\def\Column{%
 \vadjust{\setbox\z@=\hbox{\scriptsize\quad\quad tcol}%
  \theight=\ht\z@\advance\theight by \dp\z@\advance\theight by \lineskip
  \kern -\theight \vbox to \theight{%
   \rightline{\rlap{\box\z@}}%
   \vss
   }%
  }%
 }}{}%
\@ifundefined{qed}{\def\qed{%
 \ifhmode\unskip\nobreak\fi\ifmmode\ifinner\else\hskip5\p@\fi\fi
 \hbox{\hskip5\p@\vrule width4\p@ height6\p@ depth1.5\p@\hskip\p@}%
 }}{}%
\@ifundefined{cents}{}{}%
\@ifundefined{tciLaplace}{}{}%
\@ifundefined{tciFourier}{}{}%
\@ifundefined{textcurrency}{}{}%
\@ifundefined{texteuro}{}{}%
\@ifundefined{euro}{}{}%
\@ifundefined{textfranc}{}{}%
\@ifundefined{textlira}{}{}%
\@ifundefined{textpeseta}{}{}%
\@ifundefined{miss}{\def\miss{\hbox{\vrule height2\p@ width 2\p@ depth\z@}}}{}%
\@ifundefined{vvert}{}{}
\@ifundefined{tcol}{\def\tcol#1{{\baselineskip=6\p@ \vcenter{#1}} \Column}}{}%
\@ifundefined{dB}{}{}
\@ifundefined{mB}{}{}
\@ifundefined{nB}{}{}
\@ifundefined{note}{}{}%
\def\newfmtname{LaTeX2e}
%
\ifx\fmtname\newfmtname
  \DeclareOldFontCommand{\rm}{\normalfont\rmfamily}{\mathrm}
  \DeclareOldFontCommand{\sf}{\normalfont\sffamily}{\mathsf}
  \DeclareOldFontCommand{\tt}{\normalfont\ttfamily}{\mathtt}
  \DeclareOldFontCommand{\bf}{\normalfont\bfseries}{\mathbf}
  \DeclareOldFontCommand{\it}{\normalfont\itshape}{\mathit}
  \DeclareOldFontCommand{\sl}{\normalfont\slshape}{\@nomath\sl}
  \DeclareOldFontCommand{\sc}{\normalfont\scshape}{\@nomath\sc}
\fi

%

\def\alpha{{\Greekmath 010B}}%
\def\beta{{\Greekmath 010C}}%
\def\gamma{{\Greekmath 010D}}%
\def\delta{{\Greekmath 010E}}%
\def\epsilon{{\Greekmath 010F}}%
\def\zeta{{\Greekmath 0110}}%
\def\eta{{\Greekmath 0111}}%
\def\theta{{\Greekmath 0112}}%
\def\iota{{\Greekmath 0113}}%
\def\kappa{{\Greekmath 0114}}%
\def\lambda{{\Greekmath 0115}}%
\def\mu{{\Greekmath 0116}}%
\def\nu{{\Greekmath 0117}}%
\def\xi{{\Greekmath 0118}}%
\def\pi{{\Greekmath 0119}}%
\def\rho{{\Greekmath 011A}}%
\def\sigma{{\Greekmath 011B}}%
\def\tau{{\Greekmath 011C}}%
\def\upsilon{{\Greekmath 011D}}%
\def\phi{{\Greekmath 011E}}%
\def\chi{{\Greekmath 011F}}%
\def\psi{{\Greekmath 0120}}%
\def\omega{{\Greekmath 0121}}%
\def\varepsilon{{\Greekmath 0122}}%
\def\vartheta{{\Greekmath 0123}}%
\def\varpi{{\Greekmath 0124}}%
\def\varrho{{\Greekmath 0125}}%
\def\varsigma{{\Greekmath 0126}}%
\def\varphi{{\Greekmath 0127}}%

\def\nabla{{\Greekmath 0272}}
\def\FindBoldGroup{%
   {\setbox0=\hbox{$\mathbf{x\global\edef\theboldgroup{\the\mathgroup}}$}}%
}

\def\Greekmath#1#2#3#4{%
    \if@compatibility
        \ifnum\mathgroup=\symbold
           \mathchoice{\mbox{\boldmath$\displaystyle\mathchar"#1#2#3#4$}}%
                      {\mbox{\boldmath$\textstyle\mathchar"#1#2#3#4$}}%
                      {\mbox{\boldmath$\scriptstyle\mathchar"#1#2#3#4$}}%
                      {\mbox{\boldmath$\scriptscriptstyle\mathchar"#1#2#3#4$}}%
        \else
           \mathchar"#1#2#3#4%
        \fi 
    \else 
        \FindBoldGroup
        \ifnum\mathgroup=\theboldgroup 
           \mathchoice{\mbox{\boldmath$\displaystyle\mathchar"#1#2#3#4$}}%
                      {\mbox{\boldmath$\textstyle\mathchar"#1#2#3#4$}}%
                      {\mbox{\boldmath$\scriptstyle\mathchar"#1#2#3#4$}}%
                      {\mbox{\boldmath$\scriptscriptstyle\mathchar"#1#2#3#4$}}%
        \else
           \mathchar"#1#2#3#4%
        \fi     	    
	  \fi}

\newif\ifGreekBold  \GreekBoldfalse
\let\SAVEPBF=\pbf
\def\pbf{\GreekBoldtrue\SAVEPBF}%

\@ifundefined{theorem}{\newtheorem{theorem}{Theorem}}{}
\@ifundefined{lemma}{\newtheorem{lemma}[theorem]{Lemma}}{}
\@ifundefined{corollary}{\newtheorem{corollary}[theorem]{Corollary}}{}
\@ifundefined{conjecture}{}{}
\@ifundefined{proposition}{\newtheorem{proposition}[theorem]{Proposition}}{}
\@ifundefined{axiom}{}{}
\@ifundefined{remark}{}{}
\@ifundefined{example}{}{}
\@ifundefined{exercise}{}{}
\@ifundefined{definition}{\newtheorem{definition}{Definition}}{}

\@ifundefined{mathletters}{%
  \newcounter{equationnumber}  
  \def\mathletters{%
     \addtocounter{equation}{1}
     \edef\@currentlabel{\theequation}%
     \setcounter{equationnumber}{\c@equation}
     \setcounter{equation}{0}%
     \edef\theequation{\@currentlabel\noexpand\alph{equation}}%
  }
  
}{}

\@ifundefined{BibTeX}{%
    \def\BibTeX{{\rm B\kern-.05em{\sc i\kern-.025em b}\kern-.08em
                 T\kern-.1667em\lower.7ex\hbox{E}\kern-.125emX}}}{}%
\@ifundefined{AmS}%
    {\def\AmS{{\protect\usefont{OMS}{cmsy}{m}{n}%
                A\kern-.1667em\lower.5ex\hbox{M}\kern-.125emS}}}{}%
\@ifundefined{AmSTeX}{}{}%
%

\def\@@eqncr{\let\@tempa\relax
    \ifcase\@eqcnt \def\@tempa{& & &}\or \def\@tempa{& &}%
      \else \def\@tempa{&}\fi
     \@tempa
     \if@eqnsw
        \iftag@
           \@taggnum
        \else
           \@eqnnum\stepcounter{equation}%
        \fi
     \fi
     \global\tag@false
     \global\@eqnswtrue
     \global\@eqcnt\z@\cr}

\def\TCItag{\@ifnextchar*{\@TCItagstar}{\@TCItag}}
\def\@TCItag#1{%
    \global\tag@true
    \global\def\@taggnum{(#1)}%
    \global\def\@currentlabel{#1}}
\def\@TCItagstar*#1{%
    \global\tag@true
    \global\def\@taggnum{#1}%
    \global\def\@currentlabel{#1}}
%
%
%
%
%
%
%
%
%
%
%
%
%
%
%
%
%
%
%
%
%
%
%
%
%
%
%
%
%
%
%
%
%
%
%
%
%
%
%
%
%
%
%
%
%
%
%
%
%
%
%
%
%
%
%
%
%
%

\if@compatibility\else
  \RequirePackage{amsmath}
\fi

\def\ExitTCILatex{\makeatother }

\bgroup
\ifx\ds@amstex\relax
   \message{amstex already loaded}\aftergroup\ExitTCILatex
\else
   \@ifpackageloaded{amsmath}%
      {\if@compatibility\message{amsmath already loaded}\fi\aftergroup\ExitTCILatex}
      {}
   \@ifpackageloaded{amstex}%
      {\if@compatibility\message{amstex already loaded}\fi\aftergroup\ExitTCILatex}
      {}
   \@ifpackageloaded{amsgen}%
      {\if@compatibility\message{amsgen already loaded}\fi\aftergroup\ExitTCILatex}
      {}
\fi
\egroup


\typeout{TCILATEX defining AMS-like constructs in LaTeX 2.09 COMPATIBILITY MODE}
%
%
\let\DOTSI\relax
\def\RIfM@{\relax\ifmmode}%
\def\FN@{\futurelet\next}%
\newcount\intno@
\def\iint{\DOTSI\intno@\tw@\FN@\ints@}%
\def\iiint{\DOTSI\intno@\thr@@\FN@\ints@}%
\def\iiiint{\DOTSI\intno@4 \FN@\ints@}%
\def\idotsint{\DOTSI\intno@\z@\FN@\ints@}%
\def\ints@{\findlimits@\ints@@}%
\newif\iflimtoken@
\newif\iflimits@
\def\findlimits@{\limtoken@true\ifx\next\limits\limits@true
 \else\ifx\next\nolimits\limits@false\else
 \limtoken@false\ifx\ilimits@\nolimits\limits@false\else
 \ifinner\limits@false\else\limits@true\fi\fi\fi\fi}%
\def\multint@{\int\ifnum\intno@=\z@\intdots@                          
 \else\intkern@\fi                                                    
 \ifnum\intno@>\tw@\int\intkern@\fi                                   
 \ifnum\intno@>\thr@@\int\intkern@\fi                                 
 \int}
\def\multintlimits@{\intop\ifnum\intno@=\z@\intdots@\else\intkern@\fi
 \ifnum\intno@>\tw@\intop\intkern@\fi
 \ifnum\intno@>\thr@@\intop\intkern@\fi\intop}%
\def\intic@{%
    \mathchoice{\hskip.5em}{\hskip.4em}{\hskip.4em}{\hskip.4em}}%
\def\negintic@{\mathchoice
 {\hskip-.5em}{\hskip-.4em}{\hskip-.4em}{\hskip-.4em}}%
\def\ints@@{\iflimtoken@                                              
 \def\ints@@@{\iflimits@\negintic@
   \mathop{\intic@\multintlimits@}\limits                             
  \else\multint@\nolimits\fi                                          
  \eat@}
 \else                                                                
 \def\ints@@@{\iflimits@\negintic@
  \mathop{\intic@\multintlimits@}\limits\else
  \multint@\nolimits\fi}\fi\ints@@@}%
\def\intkern@{\mathchoice{\!\!\!}{\!\!}{\!\!}{\!\!}}%
\def\plaincdots@{\mathinner{\cdotp\cdotp\cdotp}}%
\def\intdots@{\mathchoice{\plaincdots@}%
 {{\cdotp}\mkern1.5mu{\cdotp}\mkern1.5mu{\cdotp}}%
 {{\cdotp}\mkern1mu{\cdotp}\mkern1mu{\cdotp}}%
 {{\cdotp}\mkern1mu{\cdotp}\mkern1mu{\cdotp}}}%
%
%
%
\def\RIfM@{\relax\protect\ifmmode}
\def\text{\RIfM@\expandafter\text@\else\expandafter\mbox\fi}
\let\nfss@text\text
\def\text@#1{\mathchoice
   {\textdef@\displaystyle\f@size{#1}}%
   {\textdef@\textstyle\tf@size{\firstchoice@false #1}}%
   {\textdef@\textstyle\sf@size{\firstchoice@false #1}}%
   {\textdef@\textstyle \ssf@size{\firstchoice@false #1}}%
   \glb@settings}

\def\textdef@#1#2#3{\hbox{{%
                    \everymath{#1}%
                    \let\f@size#2\selectfont
                    #3}}}
\newif\iffirstchoice@
\firstchoice@true
%
%
\def\Let@{\relax\iffalse{\fi\let\\=\cr\iffalse}\fi}%
\def\vspace@{\def\vspace##1{\crcr\noalign{\vskip##1\relax}}}%
\def\multilimits@{\bgroup\vspace@\Let@
 \baselineskip\fontdimen10 \scriptfont\tw@
 \advance\baselineskip\fontdimen12 \scriptfont\tw@
 \lineskip\thr@@\fontdimen8 \scriptfont\thr@@
 \lineskiplimit\lineskip
 \vbox\bgroup\ialign\bgroup\hfil$\m@th\scriptstyle{##}$\hfil\crcr}%
\def\Sb{_\multilimits@}%
\def\endSb{\crcr\egroup\egroup\egroup}%
\def\Sp{^\multilimits@}%

%
%
%
\newdimen\ex@
\ex@.2326ex
\def\rightarrowfill@#1{$#1\m@th\mathord-\mkern-6mu\cleaders
 \hbox{$#1\mkern-2mu\mathord-\mkern-2mu$}\hfill
 \mkern-6mu\mathord\rightarrow$}%
\def\leftarrowfill@#1{$#1\m@th\mathord\leftarrow\mkern-6mu\cleaders
 \hbox{$#1\mkern-2mu\mathord-\mkern-2mu$}\hfill\mkern-6mu\mathord-$}%
\def\leftrightarrowfill@#1{$#1\m@th\mathord\leftarrow
\mkern-6mu\cleaders
 \hbox{$#1\mkern-2mu\mathord-\mkern-2mu$}\hfill
 \mkern-6mu\mathord\rightarrow$}%
\def\overrightarrow{\mathpalette\overrightarrow@}%
\def\overrightarrow@#1#2{\vbox{\ialign{##\crcr\rightarrowfill@#1\crcr
 \noalign{\kern-\ex@\nointerlineskip}$\m@th\hfil#1#2\hfil$\crcr}}}%

\def\overleftarrow{\mathpalette\overleftarrow@}%
\def\overleftarrow@#1#2{\vbox{\ialign{##\crcr\leftarrowfill@#1\crcr
 \noalign{\kern-\ex@\nointerlineskip}$\m@th\hfil#1#2\hfil$\crcr}}}%
\def\overleftrightarrow{\mathpalette\overleftrightarrow@}%
\def\overleftrightarrow@#1#2{\vbox{\ialign{##\crcr
   \leftrightarrowfill@#1\crcr
 \noalign{\kern-\ex@\nointerlineskip}$\m@th\hfil#1#2\hfil$\crcr}}}%
\def\underrightarrow{\mathpalette\underrightarrow@}%
\def\underrightarrow@#1#2{\vtop{\ialign{##\crcr$\m@th\hfil#1#2\hfil
  $\crcr\noalign{\nointerlineskip}\rightarrowfill@#1\crcr}}}%

\def\underleftarrow{\mathpalette\underleftarrow@}%
\def\underleftarrow@#1#2{\vtop{\ialign{##\crcr$\m@th\hfil#1#2\hfil
  $\crcr\noalign{\nointerlineskip}\leftarrowfill@#1\crcr}}}%
\def\underleftrightarrow{\mathpalette\underleftrightarrow@}%
\def\underleftrightarrow@#1#2{\vtop{\ialign{##\crcr$\m@th
  \hfil#1#2\hfil$\crcr
 \noalign{\nointerlineskip}\leftrightarrowfill@#1\crcr}}}%

\def\qopnamewl@#1{\mathop{\operator@font#1}\nlimits@}
\let\nlimits@\displaylimits
\def\setboxz@h{\setbox\z@\hbox}

\def\varlim@#1#2{\mathop{\vtop{\ialign{##\crcr
 \hfil$#1\m@th\operator@font lim$\hfil\crcr
 \noalign{\nointerlineskip}#2#1\crcr
 \noalign{\nointerlineskip\kern-\ex@}\crcr}}}}

 \def\rightarrowfill@#1{\m@th\setboxz@h{$#1-$}\ht\z@\z@
  $#1\copy\z@\mkern-6mu\cleaders
  \hbox{$#1\mkern-2mu\box\z@\mkern-2mu$}\hfill
  \mkern-6mu\mathord\rightarrow$}
\def\leftarrowfill@#1{\m@th\setboxz@h{$#1-$}\ht\z@\z@
  $#1\mathord\leftarrow\mkern-6mu\cleaders
  \hbox{$#1\mkern-2mu\copy\z@\mkern-2mu$}\hfill
  \mkern-6mu\box\z@$}

\def\projlim{\qopnamewl@{proj\,lim}}
\def\injlim{\qopnamewl@{inj\,lim}}
\def\varinjlim{\mathpalette\varlim@\rightarrowfill@}
\def\varprojlim{\mathpalette\varlim@\leftarrowfill@}
\def\varliminf{\mathpalette\varliminf@{}}
\def\varliminf@#1{\mathop{\underline{\vrule\@depth.2\ex@\@width\z@
   \hbox{$#1\m@th\operator@font lim$}}}}
\def\varlimsup{\mathpalette\varlimsup@{}}
\def\varlimsup@#1{\mathop{\overline
  {\hbox{$#1\m@th\operator@font lim$}}}}

%
%
%
%
%
%
\begingroup \catcode `|=0 \catcode `[= 1
\catcode`]=2 \catcode `\{=12 \catcode `\}=12
\catcode`\\=12 
|gdef|@alignverbatim#1\end{align}[#1|end[align]]
|gdef|@salignverbatim#1\end{align*}[#1|end[align*]]

|gdef|@alignatverbatim#1\end{alignat}[#1|end[alignat]]
|gdef|@salignatverbatim#1\end{alignat*}[#1|end[alignat*]]

|gdef|@xalignatverbatim#1\end{xalignat}[#1|end[xalignat]]
|gdef|@sxalignatverbatim#1\end{xalignat*}[#1|end[xalignat*]]

|gdef|@gatherverbatim#1\end{gather}[#1|end[gather]]
|gdef|@sgatherverbatim#1\end{gather*}[#1|end[gather*]]

|gdef|@gatherverbatim#1\end{gather}[#1|end[gather]]
|gdef|@sgatherverbatim#1\end{gather*}[#1|end[gather*]]

|gdef|@multilineverbatim#1\end{multiline}[#1|end[multiline]]
|gdef|@smultilineverbatim#1\end{multiline*}[#1|end[multiline*]]

|gdef|@arraxverbatim#1\end{arrax}[#1|end[arrax]]
|gdef|@sarraxverbatim#1\end{arrax*}[#1|end[arrax*]]

|gdef|@tabulaxverbatim#1\end{tabulax}[#1|end[tabulax]]
|gdef|@stabulaxverbatim#1\end{tabulax*}[#1|end[tabulax*]]

|endgroup

\def\align{\@verbatim \frenchspacing\@vobeyspaces \@alignverbatim
You are using the "align" environment in a style in which it is not defined.}

\@namedef{align*}{\@verbatim\@salignverbatim
You are using the "align*" environment in a style in which it is not defined.}
\expandafter\let\csname endalign*\endcsname =\endtrivlist

\def\alignat{\@verbatim \frenchspacing\@vobeyspaces \@alignatverbatim
You are using the "alignat" environment in a style in which it is not defined.}

\@namedef{alignat*}{\@verbatim\@salignatverbatim
You are using the "alignat*" environment in a style in which it is not defined.}
\expandafter\let\csname endalignat*\endcsname =\endtrivlist

\def\xalignat{\@verbatim \frenchspacing\@vobeyspaces \@xalignatverbatim
You are using the "xalignat" environment in a style in which it is not defined.}

\@namedef{xalignat*}{\@verbatim\@sxalignatverbatim
You are using the "xalignat*" environment in a style in which it is not defined.}
\expandafter\let\csname endxalignat*\endcsname =\endtrivlist

\def\gather{\@verbatim \frenchspacing\@vobeyspaces \@gatherverbatim
You are using the "gather" environment in a style in which it is not defined.}

\@namedef{gather*}{\@verbatim\@sgatherverbatim
You are using the "gather*" environment in a style in which it is not defined.}
\expandafter\let\csname endgather*\endcsname =\endtrivlist

\def\multiline{\@verbatim \frenchspacing\@vobeyspaces \@multilineverbatim
You are using the "multiline" environment in a style in which it is not defined.}

\@namedef{multiline*}{\@verbatim\@smultilineverbatim
You are using the "multiline*" environment in a style in which it is not defined.}
\expandafter\let\csname endmultiline*\endcsname =\endtrivlist

\def\arrax{\@verbatim \frenchspacing\@vobeyspaces \@arraxverbatim
You are using a type of "array" construct that is only allowed in AmS-LaTeX.}

\def\tabulax{\@verbatim \frenchspacing\@vobeyspaces \@tabulaxverbatim
You are using a type of "tabular" construct that is only allowed in AmS-LaTeX.}

\@namedef{arrax*}{\@verbatim\@sarraxverbatim
You are using a type of "array*" construct that is only allowed in AmS-LaTeX.}
\expandafter\let\csname endarrax*\endcsname =\endtrivlist

\@namedef{tabulax*}{\@verbatim\@stabulaxverbatim
You are using a type of "tabular*" construct that is only allowed in AmS-LaTeX.}
\expandafter\let\csname endtabulax*\endcsname =\endtrivlist


 \def\endequation{%
     \ifmmode\ifinner 
      \iftag@
        \addtocounter{equation}{-1} 
        $\hfil
           \displaywidth\linewidth\@taggnum\egroup \endtrivlist
        \global\tag@false
        \global\@ignoretrue   
      \else
        $\hfil
           \displaywidth\linewidth\@eqnnum\egroup \endtrivlist
        \global\tag@false
        \global\@ignoretrue 
      \fi
     \else   
      \iftag@
        \addtocounter{equation}{-1} 
        \eqno \hbox{\@taggnum}
        \global\tag@false%
        $$\global\@ignoretrue
      \else
        \eqno \hbox{\@eqnnum}
        $$\global\@ignoretrue
      \fi
     \fi\fi
 } 

 \newif\iftag@ \tag@false
 
 \def\TCItag{\@ifnextchar*{\@TCItagstar}{\@TCItag}}
 \def\@TCItag#1{%
     \global\tag@true
     \global\def\@taggnum{(#1)}%
     \global\def\@currentlabel{#1}}
 \def\@TCItagstar*#1{%
     \global\tag@true
     \global\def\@taggnum{#1}%
     \global\def\@currentlabel{#1}}

  \@ifundefined{tag}{
     \def\tag{\@ifnextchar*{\@tagstar}{\@tag}}
     \def\@tag#1{%
         \global\tag@true
         \global\def\@taggnum{(#1)}}
     \def\@tagstar*#1{%
         \global\tag@true
         \global\def\@taggnum{#1}}
  }{}

%
%
%
%
%

\makeatother

\setlength{\textheight}{22 cm}
\setlength{\textwidth}{16 cm}
\setlength{\topmargin}{-1cm}
\setlength{\oddsidemargin}{0.5 cm}
\setlength{\evensidemargin}{0.5 cm}

\begin{document}

\title{Deformations of $G_{2}$-structures with torsion}
\author{Sergey Grigorian \\
Simons Center for Geometry and Physics\\
Stony Brook University\\
Stony Brook, NY 11794\\
USA}
\maketitle

\begin{abstract}
We consider non-infinitesimal deformations of $G_{2}$-structures on $7$%
-dimensional manifolds and derive an exact expression for the torsion of the
deformed $G_{2}$-structure. We then specialize to a case when the
deformation is defined by a vector $v$ and we explicitly derive the
expressions for the different torsion components of the new $G_{2}$%
-structure in terms of the old torsion components and derivatives of $v$. In
particular this gives a set of differential equations for the vector $v$
which have to be satisfied for a transition between $G_{2}$-structures with
particular torsions. For some specific torsion classes we find that these
equations have no solutions.
\end{abstract}

\section{Introduction}

\setcounter{equation}{0}Seven-dimensional manifolds with $G_{2}$-structure
have been studied for more than 40 years. Already in 1969, Alfred Gray
studied vector cross products on manifolds \cite{Gray-VCP}, which on $7$%
-manifolds do actually correspond to $G_{2}$-structures. Later on, Fern\'{a}%
ndez and Gray classified the possible torsion classes of $G_{2}$-structures 
\cite{FernandezGray}. The concept of $G_{2}$-structures provides a
classification for a large class of $7$-manifolds. In fact, it is well-known
that a $7$-manifolds admits a $G_{2}$-structure if and only if the first two
Stiefel-Whitney classes $w_{1}$ and $w_{2}$ vanish \cite{FernandezGray,
FriedrichNPG2}. Alternatively, a $7$-manifold admits a $G_{2}$-structure if
and only if it is orientable and admits a spin structure. A very important
special case of a $G_{2}$-structure is when the torsion vanishes. This
implies that the holonomy group lies in $G_{2}$. In Section \ref{secg2struct}
we give a more precise definition and an overview of the properties of $G_{2}
$-structures.

Suppose we are given a $7$-manifold that admits a $G_{2}$-structure, we can
ask the question - $G_{2}$-structures of which torsion classes exist on it?
This is of course a very difficult question, and it is still not clear how
to approach this. However, we could start with some given $G_{2}$-structure,
deform it and then require that the new $G_{2}$-structure lies in some
particular torsion class. This is precisely what we attempt in this paper.
In Section \ref{sectorsdeform} we first derive an expression for the $G_{2}$%
-structure torsion for a general (non-infinitesimal) deformation, and then
in Section \ref{seclam7deform}, we specialize to a particular type of
deformation - deformations that are defined by a vector (that is, the $G_{2}$
invariant $3$-form is deformed by a $3$-form lying in the $7$-dimensional
component of $\Lambda ^{3}$). In this case, we obtain an explicit expression
for the torsion of the deformed $G_{2}$-structure in terms of the old $G_{2}$%
-structure, its torsion and the vector which defines the deformation. We
then proceed to show that a deformation of this type takes a torsion-free $%
G_{2}$-structure to another torsion-free $G_{2}$-structure if and only if
the vector that defines the deformation is parallel. Moreover we also show
that on closed, compact manifolds there are no such deformations from strict
torsion classes $W_{1}$, $W_{7}$, $W_{1}\oplus W_{7}$ to the vanishing
torsion class $W_{0}$, and vice versa.

Such deformations of $G_{2}$-structures have been first considered by
Karigiannis in \cite{karigiannis-2005-57}, where he wrote down the deformed
metric and Hodge star operation, and indeed asked the question whether it is
possible to deform a $G_{2}$-structure to a strictly smaller torsion class.
This paper aims to give a partial answer to this question. Here we are
mainly concerned with non-infinitesimal deformations, but infinitesimal
deformations and flows of $G_{2}$-structures, and in particular properties
of the moduli space of manifolds with $G_{2}$ holonomy have also been
studied by Karigiannis \cite{karigiannis-2005-57, karigiannis-2007},
Karigiannis and Leung \cite{karigiannis-2007a} and by Grigorian and Yau \cite%
{GrigorianG2Review, GrigorianYau1}.

\section{$G_{2}$-structures}

\setcounter{equation}{0}\label{secg2struct}The 14-dimensional group $G_{2}$
is the smallest of the five exceptional Lie groups and is closely related to
the octonions. In particular, $G_{2}$ can be defined as the automorphism
group of the octonion algebra. The restriction of octonion multiplication to
just the imaginary octonions defines a vector cross product on $V=\mathbb{R}%
^{7}$ and vice versa. Moreover, the Euclidean inner product on $V$ can also
be defined in terms of octonion multiplication. The group that preserves the
vector cross product is precisely $G_{2}$ and since it preserves the inner
product as well, we can see that it is a subgroup of $SO\left( 7\right) $.
For more on the relationship between octonions and $G_{2}$, see \cite%
{BaezOcto, GrigorianG2Review}.The structure constants of the vector cross
product define a particular $3$-form on $\mathbb{R}^{7}$, hence $G_{2}$ can
alternatively be defined in the following way.

\begin{definition}
Let $\left( e^{1},e^{2},...,e^{7}\right) $ be a basis for $V^{\ast }$, and
denote $e^{i}\wedge e^{j}\wedge e^{k}$ by $e^{ijk}$. Then define $\varphi
_{0}$ to be the $3$-form on $\mathbb{R}^{7}$ given by 
\begin{equation}
\varphi _{0}=e^{123}+e^{145}+e^{167}+e^{246}-e^{257}-e^{347}-e^{356}.
\label{phi0def}
\end{equation}%
Then $G_{2}$ is defined as the subgroup of $GL\left( 7,\mathbb{R}\right) $
which preserves $\varphi _{0}$.
\end{definition}

Suppose for some $3$-form $\varphi $ on $V$ we define a bilinear form by 
\begin{equation}
B_{\varphi }\left( u,v\right) =\frac{1}{6}\left( u\lrcorner \varphi \right)
\wedge \left( v\lrcorner \varphi \right) \wedge \varphi  \label{Bphi}
\end{equation}%
Here the symbol $\lrcorner $ denotes contraction of a vector with the
differential form: 
\begin{equation*}
\left( u\lrcorner \varphi \right) _{mn}=u^{a}\varphi _{amn}.
\end{equation*}%
Note that we will also use this symbol for contractions of differential
forms using the metric. So for a $p$-form $\alpha $ and a $\left( p+q\right) 
$-form $\beta $, for $q\geq 0$, 
\begin{equation}
\left( \alpha \lrcorner \beta \right) _{b_{1}...b_{q}}=\alpha
^{a_{1}...a_{p}}\beta _{a_{1}...a_{p}b_{1}...b_{q}}  \label{inpdef}
\end{equation}%
where the indices on $\alpha $ are raised using the metric.

Following Hitchin (\cite{Hitchin:2000jd}), $B_{\varphi }$ is a symmetric
bilinear form on $V$ with values in the one-dimensional space $\Lambda
^{7}V^{\ast }$. Hence it defines a linear map $K_{\varphi }:V\longrightarrow
V^{\ast }\times \Lambda ^{7}V^{\ast }.$ Then taking the determinant we get $%
\det K_{\varphi }\in \left( \Lambda ^{7}V^{\ast }\right) ^{9}$, so if this
does not vanish, we choose a positive root - $\left( \det K_{\varphi
}\right) ^{\frac{1}{9}}\in \Lambda ^{7}V^{\ast }$. Then we obtain an inner
product on $V$%
\begin{equation}
g_{\varphi }\left( u,v\right) =B_{\varphi }\left( u,v\right) \left( \det
K_{\varphi }\right) ^{-\frac{1}{9}}  \label{gphi1}
\end{equation}%
and the volume form of this inner product is then $\left( \det K_{\varphi
}\right) ^{\frac{1}{9}}$. In components we can rewrite this as 
\begin{equation}
\left( g_{\varphi }\right) _{ab}=\left( \det s\right) ^{-\frac{1}{9}}s_{ab}.
\label{metricdefdirect}
\end{equation}%
with 
\begin{equation}
s_{ab}=\frac{1}{144}\varphi _{amn}\varphi _{bpq}\varphi _{rst}\hat{%
\varepsilon}^{mnpqrst}  \label{sabdef}
\end{equation}%
where $\hat{\varepsilon}^{mnpqrst}$ is the alternating symbol with $\hat{%
\varepsilon}^{12...7}=+1$.

Applying (\ref{Bphi}) to $\varphi _{0}$ as defined by (\ref{phi0def}), we
recover the standard Euclidean metric on $V$:%
\begin{equation}
g_{0}=\left( e^{1}\right) ^{2}+...+\left( e^{7}\right) ^{2}.  \label{g0def}
\end{equation}%
As we know, the stabilizer of $\varphi _{0}$ in $GL\left( 7,\mathbb{R}%
\right) $ is $G_{2}$, which is $14$-dimensional. Since $GL\left( 7,\mathbb{R}%
\right) $ is $49$-dimensional, we find that the orbit of $\varphi _{0}$ in $%
\Lambda ^{3}V^{\ast }$ has dimension $49-14=35=\dim \Lambda ^{3}V^{\ast }$.
Hence the orbit of $\varphi _{0}$ is an open subset $\Lambda _{+}^{3}$ $%
\subset \Lambda ^{3}V^{\ast }$.

\begin{definition}
\label{defpos3form}Let $V$ be a $7$-dimensional real vector space. Then a $3$%
-form $\varphi $ is said to be \emph{positive }if lies in the $GL\left( 7,%
\mathbb{R}\right) $ orbit of $\varphi _{0}$.
\end{definition}

In fact, in $\Lambda ^{3}V^{\ast }$ there are two open orbits of $GL\left( 7,%
\mathbb{R}\right) $ \cite{Bryant-1987}. The second open orbit consists of $3 
$-forms for which the metric defined by (\ref{gphi1}) has indefinite
signature $\left( 4,3\right) $, and the corresponding stabilizer is the
so-called \emph{split }$G_{2}$. The $3$-form that it stabilizes can be
obtained by changing the minus signs to plus signs in the expression (\ref%
{phi0def}) for $\varphi _{0}$. The existence of these open orbits gives a
notion of a \emph{non-degenerate} $3$-form on $V$ - that is, a $3$-form
which lies in one of the open orbits \cite{Hitchin:2000jd}. Moreover, it
turns out that non-degeneracy of a $3$-form is equivalent to non-degeneracy
of the corresponding metric. Thus if the determinant of the metric (\ref%
{metricdefdirect}), or equivalently $\det \left( s_{ab}\right) $ for $s_{ab}$
in (\ref{sabdef}), is non-zero, then the $3$-form is in one of the open
orbits. If moreover, the metric is positive-definite, then the $3$-form is
positive.

Now, given a $n$-dimensional manifold $M$, a $G$-structure on $M$ for some
Lie subgroup $G$ of $GL\left( n,\mathbb{R}\right) $ is a reduction of the
frame bundle $F$ over $M$ to a principal subbundle $P$ with fibre $G$. The
concept of a $G$-structure gives a convenient way of encoding different
geometric structures. For example, an $O\left( n\right) $-structure is a
reduction of the frame bundle to a subbundle with fibre $O\left( n\right) .$
This defines an orthonormal frame\emph{\ }at each point at $M$ and thus we
can define a Riemannian metric on $M$. Hence there is a $1$-$1$
correspondence between $O\left( n\right) $-structures and Riemannian
metrics. Similarly, an almost complex structure on a $2m$-dimensional
manifold $M$ is equivalent to a $GL\left( m,\mathbb{C}\right) $-structure.

A $G_{2}$-structure is then a reduction of the frame bundle on a $7$%
-dimensional manifold $M$ to a $G_{2}$ principal subbundle. It turns out
that there is a $1$-$1$ correspondence between $G_{2}$-structures and
positive $3$-forms on the manifold. Define the bundle of positive $3$-forms
on $M$ as the subset of $3$-forms $\varphi $ in $\Lambda ^{3}T^{\ast }M$
such that for every point $p$ in $M$, $\left. \varphi \right\vert _{p}\in
\Lambda ^{3}T_{p}^{\ast }M$ is a positive $3$-form in the sense of
Definition \ref{defpos3form}. Using the $G_{2}$ principal bundle we can then
define a positive $3$-form $\varphi $ on the whole manifold. Conversely,
suppose we are given a positive $3$-form. Then at each point $p$ the subset
of $GL\left( 7,\mathbb{R}\right) $ that identifies $\left. \varphi
\right\vert _{p}$ with $\varphi _{0}$ is isomorphic to $G_{2}.$ Overall the
whole manifold this will be a subset of the frame bundle $F$, and it is easy
to show that this does give a principal subbundle with fibre $G_{2}$, and
hence a $G_{2}$-structure.

Once we have a $G_{2}$-structure, since $G_{2}$ is a subgroup of $SO\left(
7\right) $, we can also define a Riemannian metric. More concretely, if $%
\varphi $ is the $3$-form which defines the $G_{2}$-structure, then using (%
\ref{gphi1}) we can define a corresponding metric $g$. Following Joyce (\cite%
{Joycebook}), we will adopt the following abuse of notation

\begin{definition}
Let $M$ be an oriented $7$-manifold. The pair $\left( \varphi ,g\right) $
for a positive $3$-form $\varphi $ and corresponding metric $g$ will be
referred to as a $G_{2}$-structure.
\end{definition}

Since a $G_{2}$-structure defines a metric, it also defines a Hodge star.
Thus we can construct another $G_{2}$-invariant object - the $4$-form $\ast
\varphi $. Since the Hodge star is defined by the metric, which in turn is
defined by $\varphi $, the $4$-form $\ast \varphi $ depends non-linearly on $%
\varphi $. For convenience we will usually denote $\ast \varphi $ by $\psi $%
. On $\mathbb{R}^{7}$, when $\varphi $ is given by its canonical form $%
\varphi _{0}$ (\ref{phi0def}), $\psi $ takes the following canonical form%
\begin{equation}
\psi _{0}=e^{4567}+e^{2367}+e^{2345}+e^{1357}-e^{1346}-e^{1256}-e^{1247}.
\label{sphi0def}
\end{equation}%
By considering the canonical forms $\varphi _{0}$ and $\psi _{0}$, we can
write down various contraction identities for a $G_{2}$-structure $\left(
\varphi ,g\right) $ and its corresponding $4$-form $\psi $ \cite%
{bryant-2003,GrigorianYau1,karigiannis-2007}.

\begin{proposition}
\label{propcontractions}The $3$-form $\varphi $ and the corresponding $4$%
-form $\psi $ satisfy the following identities: 
\begin{subequations}%
\label{contids} 
\begin{eqnarray}
\varphi _{abc}\varphi _{mn}^{\ \ \ \ c} &=&g_{am}g_{bn}-g_{an}g_{bm}+\psi
_{abmn}  \label{phiphi1} \\
\varphi _{abc}\psi _{mnp}^{\ \ \ \ \ \ c} &=&3\left( g_{a[m}\varphi
_{np]b}-g_{b[m}\varphi _{np]a}\right)  \label{phipsi} \\
\psi _{abcd}\psi ^{mnpq} &=&24\delta _{a}^{[m}\delta _{b}^{n}\delta
_{c}^{p}\delta _{d}^{q]}+72\psi _{\lbrack ab}^{\ \ \ \ [mn}\delta
_{c}^{p}\delta _{d]}^{q]}-16\varphi _{\lbrack abc}\varphi ^{\ [mnp}\delta
_{d]}^{q]}  \label{psipsi0}
\end{eqnarray}%
\end{subequations}%
where $\left[ m\ n\ p\right] $ denotes antisymmetrization of indices and $%
\delta _{a}^{b}$ is the Kronecker delta, with $\delta _{b}^{a}=1$ if $a=b$
and $0$ otherwise.
\end{proposition}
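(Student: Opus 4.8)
The plan is to treat the three identities in (\ref{contids}) as pointwise algebraic statements. Fix a point $p\in M$. By the definition of a $G_2$-structure there is a coframe at $p$ in which $\varphi|_p=\varphi_0$ as in (\ref{phi0def}) and $g|_p=g_0=\delta$ as in (\ref{g0def}); since $\psi=\ast\varphi$ is determined by $\varphi$ and $g$, in the same coframe $\psi|_p=\psi_0$ as in (\ref{sphi0def}). Every term appearing in (\ref{phiphi1})--(\ref{psipsi0}) is a tensor, so each identity is independent of the chosen coframe. It therefore suffices to establish the three identities for the constant forms $\varphi_0,\psi_0$ with $g=\delta$ on $\mathbb{R}^7$; the statement at $p$ follows, and $p$ is arbitrary.

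I would prove the fundamental identity (\ref{phiphi1}) first, since the other two reduce to it. The tensor $\varphi_{abc}\varphi_{mn}{}^{c}$ is antisymmetric in $(a,b)$ and in $(m,n)$ and symmetric under interchange of the two pairs, so as a $G_2$-invariant object it must be a linear combination of the only invariant tensors carrying these symmetries: the $\Lambda^2$-identity tensor $g_{am}g_{bn}-g_{an}g_{bm}$ and the four-form $\psi_{abmn}$ (a lone $\varphi$ cannot appear, by rank and index count). Writing $\varphi_{abc}\varphi_{mn}{}^{c}=\alpha(g_{am}g_{bn}-g_{an}g_{bm})+\beta\,\psi_{abmn}$, I fix the constants by contraction: contracting with $g^{bn}$ gives $\varphi_{abc}\varphi_{m}{}^{bc}=6g_{am}$ on the left (the $\psi$-trace drops out), forcing $\alpha=1$, while evaluating a single component on which the metric terms vanish gives $\beta=1$; the overall scale is consistent with $\varphi_{abc}\varphi^{abc}=42$. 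One may instead read off $\alpha=\beta=1$ directly from (\ref{phi0def}) and (\ref{sphi0def}).

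The remaining two identities follow by the same recipe. For (\ref{phipsi}), the tensor $\varphi_{abc}\psi_{mnp}{}^{c}$ is totally antisymmetric in $(m,n,p)$, and the only $G_2$-invariant tensor of that symmetry type built from $g$ and $\varphi$ is the displayed combination $g_{a[m}\varphi_{np]b}-g_{b[m}\varphi_{np]a}$; a single contraction (say with $g^{am}$, reducing to (\ref{phiphi1})) fixes the coefficient $3$. For (\ref{psipsi0}), the tensor $\psi_{abcd}\psi^{mnpq}$ is totally antisymmetric in each index group and decomposes into exactly the three displayed invariant pieces — the $\Lambda^4$-identity term, the mixed $\psi\delta$ term, and the $\varphi\varphi$ term — so I write it as an unknown combination of these and determine the three coefficients by three independent contractions, evaluating the resulting traces with (\ref{phiphi1}) and $\varphi_{abc}\varphi^{mbc}=6\delta^m_a$.

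The main obstacle is the bookkeeping in (\ref{psipsi0}): pinning down the three integers $24$, $72$, $16$ requires several independent contractions and meticulous tracking of the antisymmetrization brackets and their combinatorial normalizations, since the $\psi\psi$ contraction expands into many terms and the conventions for $\delta^{[m}_a\cdots\delta^{q]}_d$ must be matched consistently on both sides. The representation-theoretic step that isolates the allowed invariant tensors is routine; it is the constant-fixing, and keeping the antisymmetrizers consistent across the equality, that is error-prone, so I would cross-check the final coefficients against a direct evaluation in the standard frame (\ref{phi0def})--(\ref{sphi0def}).
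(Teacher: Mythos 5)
Your strategy is correct, and it is genuinely different from what the paper does: the paper contains \emph{no} proof of Proposition \ref{propcontractions} at all --- it prefaces the statement with the remark that the identities can be written down ``by considering the canonical forms $\varphi_0$ and $\psi_0$'' and otherwise delegates to the cited references. Your proposal supplies the missing argument in two steps: the pointwise reduction to the standard coframe (which is exactly what the paper's remark gestures at), and then, in place of a brute-force check of components, a representation-theoretic reduction. The latter is sound: by Schur's lemma and the decompositions (\ref{l2decom})--(\ref{l4decom}), the spaces of $G_2$-equivariant maps $\Lambda^2\to\Lambda^2$, $\Lambda^3\to\Lambda^2$ and $\Lambda^4\to\Lambda^4$ have dimensions $2$, $1$ and $3$ respectively --- precisely the number of independent terms displayed in (\ref{phiphi1}), (\ref{phipsi}) and (\ref{psipsi0}) --- so each left-hand side is forced to be a combination of the displayed tensors and only the constants need computing. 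What your route buys is a conceptual proof with minimal computation; what the paper's (implicit, outsourced) route buys is that nothing beyond arithmetic in the frame (\ref{phi0def}), (\ref{sphi0def}) has to be trusted.

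Two concrete corrections to your constant-fixing step, both in (\ref{phipsi}). First, your parenthetical recipe --- contract with $g^{am}$, ``reducing to (\ref{phiphi1})'' --- does not work: $g^{am}\varphi_{abc}\psi_{mnp}{}^{c}$ is a two-index $\varphi$--$\psi$ contraction, about which (\ref{phiphi1}) says nothing. Contract instead over the antisymmetric pair with $\varphi^{ab}{}_{d}$: the double trace of (\ref{phiphi1}) gives $\varphi^{ab}{}_{d}\varphi_{abc}=6g_{dc}$, so the left side becomes $6\psi_{mnpd}$, while the right side, expanded via (\ref{phiphi1}), becomes $6\psi_{dmnp}=-6\psi_{mnpd}$. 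Second --- and this is why the canonical-frame cross-check you describe as optional bookkeeping is in fact essential --- that computation, or a single component evaluation (take $a=1$, $b=2$, $(m,n,p)=(2,6,7)$: the left side is $\varphi_{123}\psi_{2673}=+1$, while the displayed right side equals $-1$), shows that with the paper's own $\varphi_0$, $\psi_0$ and with the contracted index in the \emph{last} slot of $\psi$, the coefficient in (\ref{phipsi}) comes out as $-3$, not $+3$; the identity holds as printed only when the contracted index sits in the first slot, since $\psi^{c}{}_{mnp}=-\psi_{mnp}{}^{c}$. Equations (\ref{phiphi1}) and (\ref{psipsi0}) do check out against (\ref{phi0def}), (\ref{sphi0def}). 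So carry your plan through, but do the cross-check: it is what pins the constants down, and it exposes a sign-convention discrepancy in the statement itself that pure invariant-counting can never see.
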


The above identities can be of course further contracted - the details can
be found in \cite{GrigorianYau1,karigiannis-2007}. These identities and
their contractions are crucial whenever any calculations involving $\varphi $
and $\psi $ have to be done.

For a general $G$-structure, the spaces of $p$-forms decompose according to
irreducible representations of $G$. Given a $G_{2}$-structure, we have the
following decomposition of $p$-forms:%
\begin{subequations}%

\begin{eqnarray}
\Lambda ^{1} &=&\Lambda _{7}^{1}  \label{l1decom} \\
\Lambda ^{2} &=&\Lambda _{7}^{2}\oplus \Lambda _{14}^{2}  \label{l2decom} \\
\Lambda ^{3} &=&\Lambda _{1}^{3}\oplus \Lambda _{7}^{3}\oplus \Lambda
_{27}^{3}  \label{l3decom} \\
\Lambda ^{4} &=&\Lambda _{1}^{4}\oplus \Lambda _{7}^{4}\oplus \Lambda
_{27}^{4}  \label{l4decom} \\
\Lambda ^{5} &=&\Lambda _{7}^{5}\oplus \Lambda _{14}^{5}  \label{l5decom} \\
\Lambda ^{6} &=&\Lambda _{7}^{6}  \label{l6decom}
\end{eqnarray}

\end{subequations}%
The subscripts denote the dimension of representation and components which
correspond to the same representation are isomorphic to each other. We have
the following characterization of the various components \cite%
{Bryant-1987,bryant-2003}:

\begin{proposition}
Let $M$ be a $7$-manifold with a $G_{2}$-structure $\left( \varphi ,g\right) 
$. Then the components of spaces of $2$-, $3$-, $4$-, and $5$-forms are
given by

\begin{eqnarray*}
\Lambda _{7}^{2} &=&\left\{ \alpha \lrcorner \varphi \text{: }\alpha \in
\Lambda _{7}^{1}\right\} \\
\Lambda _{14}^{2} &=&\left\{ \omega \in \Lambda ^{2}\text{: }\left( \omega
_{ab}\right) \in \mathfrak{g}_{2}\right\} =\left\{ \omega \in \Lambda ^{2}%
\text{: }\omega \lrcorner \varphi =0\right\} \\
\Lambda _{1}^{3} &=&\left\{ f\varphi \text{: }f\in C^{\infty }\left(
M\right) \right\} \\
\Lambda _{7}^{3} &=&\left\{ \alpha \lrcorner \psi \text{: }\alpha \in
\Lambda _{7}^{1}\right\} \\
\Lambda _{27}^{3} &=&\left\{ \chi \in \Lambda ^{3}:\chi
_{abc}=h_{[a}^{d}\varphi _{bc]d}\text{ for }h_{ab}~\text{traceless, symmetric%
}\right\} \\
\Lambda _{1}^{4} &=&\left\{ f\psi \text{: }f\in C^{\infty }\left( M\right)
\right\} \\
\Lambda _{7}^{4} &=&\left\{ \alpha \wedge \varphi \text{: }\alpha \in
\Lambda _{7}^{1}\right\} \\
\Lambda _{27}^{4} &=&\left\{ \chi \in \Lambda ^{4}:\chi
_{abcd}=h_{[a}^{e}\psi _{bcd]e}\text{ for }h_{ab}~\text{traceless, symmetric}%
\right\} \\
\Lambda _{7}^{5} &=&\left\{ \alpha \wedge \psi \text{: }\alpha \in \Lambda
_{7}^{1}\right\} \\
\Lambda _{14}^{5} &=&\left\{ \omega \wedge \varphi \text{: }\omega \in
\Lambda _{14}^{2}\text{ }\right\}
\end{eqnarray*}
\end{proposition}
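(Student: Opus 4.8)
The plan is to prove each equality by exhibiting the right-hand side as a $G_{2}$-invariant subspace of the correct dimension and then invoking the abstract decompositions (\ref{l2decom})--(\ref{l5decom}). The key structural remark is that within each $\Lambda ^{p}$ the summands are non-isomorphic irreducibles of \emph{pairwise distinct} dimension ($\{7,14\}$ for $\Lambda ^{2}$ and $\Lambda ^{5}$, $\{1,7,27\}$ for $\Lambda ^{3}$ and $\Lambda ^{4}$), and in each case no listed dimension is a sum of the others. By complete reducibility every $G_{2}$-invariant subspace is a sub-sum of the components, so a $G_{2}$-invariant subspace whose dimension equals one of the subscripts $d_{i}$ must coincide with the unique component $\Lambda ^{p}_{d_{i}}$. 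Hence for each proposed subspace $S$ it suffices to check (i) that $S$ is $G_{2}$-invariant and (ii) that $\dim S$ equals its advertised subscript.

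Invariance is immediate in every case, since each $S$ is the image of a $G_{2}$-equivariant linear map built solely from the invariant tensors $g$, $\varphi $ and $\psi =\ast \varphi $. Explicitly, $\alpha \mapsto \alpha \lrcorner \varphi $, $\alpha \mapsto \alpha \lrcorner \psi $, $\alpha \mapsto \alpha \wedge \varphi $ and $\alpha \mapsto \alpha \wedge \psi $ are equivariant maps out of $\Lambda _{7}^{1}$; $f\mapsto f\varphi $ and $f\mapsto f\psi $ are equivariant out of the trivial module; $h\mapsto h_{[a}^{d}\varphi _{bc]d}$ and $h\mapsto h_{[a}^{e}\psi _{bcd]e}$ are equivariant out of the space of symmetric traceless matrices $\mathrm{Sym}^{2}_{0}$; and $\omega \mapsto \omega \wedge \varphi $ is equivariant out of $\Lambda _{14}^{2}$. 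Because $g$, $\varphi $, $\psi $ are $G_{2}$-invariant, each image is automatically a $G_{2}$-submodule, so only the dimension counts remain.

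The dimension counts reduce to injectivity, using the contraction identities of Proposition \ref{propcontractions}. For the maps out of $\Lambda _{7}^{1}$, contracting (\ref{phiphi1}) gives $\varphi _{abc}\varphi _{m}{}^{bc}=6g_{am}$, whence $\left\vert \alpha \lrcorner \varphi \right\vert ^{2}=6\left\vert \alpha \right\vert ^{2}$ and similarly for $\psi $; so these maps are injective and their images are $7$-dimensional. For $\Lambda _{14}^{2}$ I would read off the kernel of $\omega \mapsto \omega \lrcorner \varphi $ from (\ref{phiphi1}), find it $14$-dimensional, and then identify it with $\mathfrak{g}_{2}$ by verifying that the single condition $\omega ^{ab}\varphi _{abc}=0$ is equivalent to $\omega $ annihilating $\varphi $ as a derivation, $\omega _{a}^{d}\varphi _{dbc}+\omega _{b}^{d}\varphi _{adc}+\omega _{c}^{d}\varphi _{abd}=0$. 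For the two $27$-dimensional subspaces I would note that a general matrix splits as $h=\tfrac{1}{7}(\mathrm{tr}\,h)\,g+h_{0}+\omega $ into trace, symmetric-traceless and antisymmetric parts, matching $\Lambda ^{3}=\Lambda _{1}^{3}\oplus \Lambda _{27}^{3}\oplus \Lambda _{7}^{3}$; tracelessness and symmetry of $h$ kill the $\Lambda _{1}^{3}$ and $\Lambda _{7}^{3}$ pieces, and injectivity follows by contracting $\chi _{abc}$ back against $\varphi $ to recover $h$, again via (\ref{phiphi1}).

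Finally, the four top-degree statements follow from the lower-degree ones via the Hodge star, a $G_{2}$-equivariant isometry with $\ast \varphi =\psi $ and $\ast \psi =\varphi $ (the latter because $\ast ^{2}=\mathrm{id}$ on $\Lambda ^{3}$ in seven Riemannian dimensions). Using the standard identity $\alpha \lrcorner (\ast \beta )=\pm \ast (\alpha \wedge \beta )$ for a $1$-form $\alpha $, one gets $\ast (\alpha \lrcorner \psi )=\pm \,\alpha \wedge \varphi $ and $\ast (\alpha \lrcorner \varphi )=\pm \,\alpha \wedge \psi $, so $\ast $ carries $\Lambda _{7}^{3}$ and $\Lambda _{7}^{2}$ onto the stated descriptions of $\Lambda _{7}^{4}$ and $\Lambda _{7}^{5}$; likewise $\ast \{f\varphi \}=\{f\psi \}$ gives $\Lambda _{1}^{4}$, while the remaining $\Lambda _{27}^{4}$ and $\Lambda _{14}^{5}$ cases follow either from $\ast $ or directly from the same invariance-plus-dimension argument. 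I expect the main obstacle to be the two genuinely computational points: confirming that the contraction characterization of $\mathfrak{g}_{2}=\Lambda _{14}^{2}$ agrees with the derivation characterization, and establishing injectivity of $h\mapsto h_{[a}^{d}\varphi _{bc]d}$ on symmetric traceless matrices, both of which require careful manipulation of the identities in Proposition \ref{propcontractions}.
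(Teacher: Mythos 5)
The paper does not actually prove this Proposition: it is quoted as known, with the characterization attributed to \cite{Bryant-1987,bryant-2003}, so there is no internal argument to compare yours against. Your strategy is the standard representation-theoretic one and it is sound: each right-hand side is the image (or kernel) of a map built equivariantly from $g$, $\varphi $, $\psi $, so it is a $G_{2}$-submodule; within each $\Lambda ^{p}$ the irreducible summands of (\ref{l2decom})--(\ref{l6decom}) have pairwise distinct dimensions and no dimension is a sum of the others, so by complete reducibility and Schur's lemma an invariant subspace of dimension $7$, $14$, $1$ or $27$ must \emph{be} the corresponding summand; hence only equivariance plus a dimension (injectivity) count is needed, and your injectivity checks via the contractions of Proposition \ref{propcontractions} (e.g.\ $\varphi _{abc}\varphi _{m}^{\ \ bc}=6g_{am}$ from (\ref{phiphi1}), and recovery of $h$ from $\chi _{mn(a}\varphi _{b)}^{\ \ mn}$, which is exactly the paper's $\pi _{27}$ projection formula) are the right ones. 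What this buys is self-containedness: everything reduces to identities the paper already assumes, whereas the paper's citation route avoids the computations entirely. Two refinements to your execution plan. First, for $\Lambda _{14}^{2}$ do not try to verify the equivalence of $\omega \lrcorner \varphi =0$ with the derivation condition by direct index manipulation in both directions: the implication (derivation $\Rightarrow $ contraction) is a one-line trace, and the converse should come from dimension counting --- $\mathfrak{g}_{2}$ is $14$-dimensional by definition of $G_{2}$, it sits inside the $14$-dimensional kernel of $\omega \mapsto \omega \lrcorner \varphi $, hence equals it. Second, your list of computational obstacles omits one of the same flavor: $\Lambda _{14}^{5}=\left\{ \omega \wedge \varphi :\omega \in \Lambda _{14}^{2}\right\} $ needs injectivity of $\omega \mapsto \omega \wedge \varphi $ on $\Lambda _{14}^{2}$, and since $\Lambda _{14}^{2}$ is irreducible it suffices to exhibit a single $\omega \in \Lambda _{14}^{2}$ with $\omega \wedge \varphi \neq 0$ (for $\varphi _{0}$, take $\omega =e^{23}-e^{67}$: one checks $\omega \lrcorner \varphi _{0}=0$ while $\omega \wedge \varphi _{0}$ contains the term $e^{12345}$). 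Neither point undermines the approach; both are routine.
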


In particular, we see that the $7$-dimensional component of $\Lambda ^{3}$
is defined by $1$-forms (or equivalently, vectors), and the $27$-dimensional
component is given by traceless symmetric tensors. For convenience, and
following \cite{bryant-2003}, we will adopt the following notation for the
map from symmetric tensors into $\Lambda ^{3}$:%
\begin{equation}
\mathrm{i}_{\varphi }:\mathrm{Sym}^{2}\left( V^{\ast }\right)
\longrightarrow \Lambda ^{3}\ \ \text{given by }\mathrm{i}_{\varphi }\left(
h\right) _{abc}=h_{[a}^{d}\varphi _{bc]d}
\end{equation}%
and similarly, for the map from symmetric tensors into $\Lambda ^{4}$:%
\begin{equation}
\mathrm{i}_{\psi }:\mathrm{Sym}^{2}\left( V^{\ast }\right) \longrightarrow
\Lambda ^{4}\ \ \text{given by }\mathrm{i}_{\psi }\left( h\right)
_{abcd}=h_{[a}^{e}\psi _{bcd]e}.
\end{equation}

It is sometimes useful to be able to find projections of given $p$-form onto
the different components. Here we collect some of these results \cite%
{karigiannis-2007, GrigorianYau1}:

\begin{proposition}
\label{proplamb2proj}Suppose $\omega $ is a $2$-form. Then the projections $%
\pi _{7}\left( \omega \right) $ and $\pi _{14}\left( \omega \right) $ onto $%
\Lambda _{7}^{2}$ and $\Lambda _{14}^{2}$, respectively, are given by 
\begin{subequations}%
\begin{eqnarray}
\pi _{7}\left( \omega \right) &=&\alpha \lrcorner \varphi \ \ \text{where }%
\alpha =\frac{1}{6}\omega \lrcorner \varphi \\
\pi _{14}\left( \omega \right) &=&\frac{2}{3}\omega -\frac{1}{6}\omega
\lrcorner \psi
\end{eqnarray}%
\end{subequations}%
\end{proposition}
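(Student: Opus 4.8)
The plan is to exploit the uniqueness of the decomposition $\omega = \pi_7(\omega) + \pi_{14}(\omega)$ together with the two characterizations supplied by the preceding proposition: every element of $\Lambda_7^2$ has the form $\alpha\lrcorner\varphi$ for some $1$-form $\alpha$, while every element of $\Lambda_{14}^2$ is annihilated by contraction with $\varphi$. So I would write $\pi_7(\omega)=\alpha\lrcorner\varphi$ and contract the entire decomposition with $\varphi$. The $\Lambda_{14}^2$ term then drops out by definition, leaving the identity $\omega\lrcorner\varphi=(\alpha\lrcorner\varphi)\lrcorner\varphi$. Everything reduces to evaluating the right-hand side as a multiple of $\alpha$.

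First I would establish the double contraction $\varphi_{amn}\varphi_c{}^{mn}=6g_{ac}$. This follows by contracting the identity (\ref{phiphi1}) over one further index pair: the two metric terms give $7g_{ac}-g_{ac}=6g_{ac}$, and the $\psi$ term vanishes because $\psi$ is totally antisymmetric and the contraction runs over two of its slots. Using the cyclic symmetry of $\varphi$ to line up the indices, $(\alpha\lrcorner\varphi)\lrcorner\varphi$ becomes $\alpha^a\varphi_a{}^{mn}\varphi_{mnc}=6\alpha_c$. Hence $\omega\lrcorner\varphi=6\alpha$, i.e. $\alpha=\tfrac16\,\omega\lrcorner\varphi$, which is exactly the stated formula for $\pi_7$.

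For $\pi_{14}$ I would simply subtract, $\pi_{14}(\omega)=\omega-\pi_7(\omega)=\omega-\tfrac16(\omega\lrcorner\varphi)\lrcorner\varphi$, and now compute the double contraction of $\omega$ directly. Applying (\ref{phiphi1}) to $\omega^{bc}\varphi_{bc}{}^{a}\varphi_{amn}$ yields $\omega^{bc}(g_{bm}g_{cn}-g_{bn}g_{cm}+\psi_{bcmn})$; the two metric terms combine, by the antisymmetry of $\omega$, to $2\omega_{mn}$, while the last term is precisely $(\omega\lrcorner\psi)_{mn}$. Thus $(\omega\lrcorner\varphi)\lrcorner\varphi=2\omega+\omega\lrcorner\psi$, and substituting back gives $\pi_{14}(\omega)=\tfrac23\omega-\tfrac16\,\omega\lrcorner\psi$, as claimed.

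I do not expect a genuine obstacle here: once the contraction identities are set up, the computation is routine, and the only real care needed is bookkeeping — tracking the antisymmetry of $\omega$ to produce the factor of $2$, and using the cyclic symmetry of $\varphi$ so that the summed slots match the form in which (\ref{phiphi1}) is written. As a consistency check I would verify that the proposed $\pi_{14}(\omega)$ really lies in $\Lambda_{14}^2$, that is $(\pi_{14}(\omega))\lrcorner\varphi=0$; this is immediate from the relation $(\beta\lrcorner\varphi)\lrcorner\varphi=6\beta$ applied to $\beta=\omega\lrcorner\varphi$, which simultaneously confirms that the two formulas are mutually consistent and that $\pi_7(\omega)+\pi_{14}(\omega)$ recovers $\omega$.
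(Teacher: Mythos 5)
Your argument is correct, and every computation in it checks out: contracting the identity (\ref{phiphi1}) once more with the metric gives $\varphi _{abc}\varphi _{m}^{\ \ bc}=6g_{am}$ (the $\psi$ term dying by total antisymmetry), which forces $\alpha =\frac{1}{6}\omega \lrcorner \varphi$ once the $\Lambda _{14}^{2}$ part is annihilated by contraction with $\varphi$; and the identity $\left( \omega \lrcorner \varphi \right) \lrcorner \varphi =2\omega +\omega \lrcorner \psi$, again read off from (\ref{phiphi1}) together with the antisymmetry of $\omega$, turns $\omega -\pi _{7}\left( \omega \right)$ into the stated formula for $\pi _{14}\left( \omega \right)$. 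One caveat on the comparison: the paper does not actually prove this proposition, it merely collects it from the cited references, so there is no in-paper proof to measure yours against. The closest analogue is the paper's proof of Proposition \ref{proplamb5proj} (the $5$-form projections), and your method is the same in spirit as that one: write the form in its typed decomposition, contract with the invariant forms, and use the contraction identities of Proposition \ref{propcontractions} to isolate each component. Your closing consistency check, namely that $\pi _{14}\left( \omega \right) \lrcorner \varphi =0$ via $\left( \beta \lrcorner \varphi \right) \lrcorner \varphi =6\beta$ applied to the $1$-form $\beta =\omega \lrcorner \varphi$, is a worthwhile addition, since it verifies membership in $\Lambda _{14}^{2}$ directly rather than leaving it implicit in the uniqueness of the decomposition.
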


\begin{proposition}
Suppose $\chi $ is a $3$-form. Then the projections $\pi _{1}\left( \chi
\right) $, $\pi _{7}\left( \chi \right) $ and $\pi _{27}\left( \chi \right) $
onto $\Lambda _{1}^{3}$, $\Lambda _{7}^{3}$ and $\Lambda _{27}^{3}$,
respectively, are given by 
\begin{subequations}%
\begin{eqnarray}
\pi _{1}\left( \chi \right) &=&a\varphi \ \text{where }a=\frac{1}{42}\chi
\lrcorner \varphi \  \\
\pi _{7}\left( \chi \right) &=&\omega \lrcorner \psi \ \text{where }\omega =-%
\frac{1}{24}\chi \lrcorner \psi \  \\
\pi _{27}\left( \chi \right) &=&\mathrm{i}_{\varphi }\left( h\right) \ \text{%
where }h_{ab}=\frac{3}{4}\chi _{mn(a}\varphi _{b)}^{\ \ mn}-\frac{3}{28}%
\left( \chi \lrcorner \varphi \right) g_{ab}\ 
\end{eqnarray}%
\end{subequations}%
. Similarly, if $\chi $ is a $4$-form the corresponding $4$-form, the
projections are 
\begin{subequations}%
\begin{eqnarray}
\pi _{1}\left( \chi \right) &=&a\psi \ \text{where }a=\frac{1}{168}\chi
\lrcorner \psi \  \\
\pi _{7}\left( \chi \right) &=&\omega \wedge \varphi \ \text{where }\omega =-%
\frac{1}{24}\varphi \lrcorner \chi \  \\
\pi _{27}\left( \chi \right) &=&\mathrm{i}_{\psi }\left( h\right) \ \text{%
where }h_{ab}=-\frac{1}{3}\chi _{mnp(a}\psi _{b)}^{\ \ mnp}+\frac{1}{21}%
\left( \chi \lrcorner \psi \right) g_{ab}\ 
\end{eqnarray}%
\end{subequations}%
.
\end{proposition}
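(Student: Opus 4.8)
The plan is to use the fact that the three summands of $\Lambda^{3}=\Lambda_{1}^{3}\oplus\Lambda_{7}^{3}\oplus\Lambda_{27}^{3}$ are pairwise non-isomorphic irreducible $G_{2}$-representations, so that they are mutually orthogonal and each $\pi_{k}$ is the orthogonal projection onto its summand. I would attach to each projection a natural $G_{2}$-equivariant ``detecting'' contraction: $\chi\mapsto\chi\lrcorner\varphi\in\mathbb{R}$ for $\pi_{1}$, $\chi\mapsto\chi\lrcorner\psi\in\Lambda^{1}=\Lambda_{7}^{1}$ for $\pi_{7}$, and $\chi\mapsto\chi_{mn(a}\varphi_{b)}^{\ \ mn}\in\mathrm{Sym}^{2}V^{\ast}$ for $\pi_{27}$. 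The key simplification is Schur's lemma: since $\mathrm{Sym}^{2}V^{\ast}\cong\mathbb{R}\oplus\Lambda_{27}$ contains no copy of the $7$, and the trivial representation, $\Lambda_{7}$ and $\Lambda_{27}$ are mutually non-isomorphic, every contraction between non-isomorphic isotypic pieces vanishes automatically. Writing a general $3$-form as $\chi=a\varphi+\beta\lrcorner\psi+\mathrm{i}_{\varphi}(h)$ with $\beta$ a $1$-form and $h$ symmetric traceless, the whole problem reduces to pinning down the isotypic-diagonal constants, and for these I would use only the identities of Proposition \ref{propcontractions} and their traces.

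For $\pi_{1}$, contracting (\ref{phiphi1}) on both pairs of free indices gives $\varphi_{abc}\varphi^{abc}=42$, so $\chi\lrcorner\varphi=42a$ and $a=\tfrac{1}{42}\chi\lrcorner\varphi$. For $\pi_{7}$, the relevant constant comes from the three-index contraction $\psi_{abce}\psi^{abcd}=24\delta_{e}^{d}$ (implied by (\ref{psipsi0}) and consistent with $\psi_{abcd}\psi^{abcd}=168$); evaluating $\chi\lrcorner\psi$ on $\chi=\beta\lrcorner\psi$ and cycling the contracted index of $\psi$ past the free one produces $(\beta\lrcorner\psi)\lrcorner\psi=-24\beta$, whence $\beta=-\tfrac{1}{24}\chi\lrcorner\psi$, which is the stated $\omega$. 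The lone minus sign here is the one place a sign is easy to drop, so I would track the index order explicitly.

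The substantive step is $\pi_{27}$. I would expand $\mathrm{i}_{\varphi}(h)_{mna}=h_{[m}^{\ e}\varphi_{na]e}$ into its three cyclic terms and contract with $\varphi_{b}^{\ \ mn}$, applying (\ref{phiphi1}) to each term. Every metric piece collapses via $\varphi_{amn}\varphi_{b}^{\ \ mn}=6g_{ab}$ and $\mathrm{tr}(h)=0$, while every $\psi$ piece dies because it contracts the symmetric $h$ against two antisymmetric slots of $\psi$; the three terms contribute $-h_{ab}$, $-h_{ab}$ and $6h_{ab}$, giving $\mathrm{i}_{\varphi}(h)_{mn(a}\varphi_{b)}^{\ \ mn}=\tfrac{4}{3}h_{ab}$. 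Since the $\Lambda_{1}^{3}$ piece contributes $6a\,g_{ab}$ (trivial-to-trivial) and the $\Lambda_{7}^{3}$ piece contributes nothing (Schur), solving $\chi_{mn(a}\varphi_{b)}^{\ \ mn}=6a\,g_{ab}+\tfrac{4}{3}h_{ab}$ for $h$ and inserting $6a=\tfrac{1}{7}\chi\lrcorner\varphi$ yields exactly $h_{ab}=\tfrac{3}{4}\chi_{mn(a}\varphi_{b)}^{\ \ mn}-\tfrac{3}{28}(\chi\lrcorner\varphi)g_{ab}$. I would then check directly that this $h$ is symmetric and traceless, so that $\mathrm{i}_{\varphi}(h)$ genuinely lies in $\Lambda_{27}^{3}$.

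Finally, the $4$-form formulas follow by the identical argument with $\psi$ replacing $\varphi$ throughout, now using $\psi_{abcd}\psi^{abcd}=168$ for the coefficient $\tfrac{1}{168}$ and the analogous symmetrized triple contraction with $\psi$ for $-\tfrac{1}{24}$ and the pair $-\tfrac{1}{3},\tfrac{1}{21}$; more economically, one applies the Hodge star to the three $3$-form projections, since $\ast$ is a $G_{2}$-isometry intertwining $\Lambda^{3}$ with $\Lambda^{4}$ summand by summand. I expect the only real obstacle to be bookkeeping rather than ideas: keeping the index orderings and the single sign straight through the triple use of (\ref{phiphi1}) in the $\pi_{27}$ computation, and confirming that each apparently mixed contraction truly vanishes rather than merely being assumed to by orthogonality.
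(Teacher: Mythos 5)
Your strategy is sound, and a preliminary remark on the comparison: the paper does not actually prove this proposition at all --- it is collected from \cite{karigiannis-2007,GrigorianYau1} --- so the only in-paper benchmark is the proof of the $5$-form analogue, Proposition \ref{proplamb5proj}, which proceeds exactly as you do: expand the form in its isotypic pieces with unknown coefficients, hit it with a detecting contraction, and fix the constants via Proposition \ref{propcontractions}. Your $3$-form half is correct in every detail. The constants check out: $\varphi _{abc}\varphi ^{abc}=42$; the sign in $\left( \beta \lrcorner \psi \right) \lrcorner \psi =-24\beta $ is right and comes, as you say, from $\psi _{abcm}=-\psi _{mabc}$; and the three cyclic terms of $\mathrm{i}_{\varphi }\left( h\right) _{mna}\varphi _{b}^{\ \ mn}$ do contribute $-h_{ab}$, $-h_{ab}$, $6h_{ab}$ (the $\psi $-terms dying against the symmetric $h$), so that $\chi _{mn(a}\varphi _{b)}^{\ \ mn}=6a\,g_{ab}+\frac{4}{3}h_{ab}$, which inverts to the stated $h_{ab}$. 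The Schur argument correctly disposes of all cross-terms, and is cleaner than checking each one by brute force.

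The shortfall is the $4$-form half. ``The identical argument with $\psi $ replacing $\varphi $ throughout'' is not what happens, and transporting the $3$-form computation mechanically would produce wrong signs and constants. Two things change. First, a cyclic reordering of four indices is odd, so $\psi _{mnpa}\psi _{b}^{\ \ mnp}=-\psi _{amnp}\psi _{b}^{\ \ mnp}=-24g_{ab}$, whereas in the $3$-form case $\varphi _{mna}\varphi _{b}^{\ \ mn}=+6g_{ab}$; indeed the stated formula for $h_{ab}$ is only consistent if the contraction $\chi _{mnp(a}\psi _{b)}^{\ \ mnp}$ is read with both free indices occupying the same slot --- test it on $\chi =\psi $, where $h$ must vanish. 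Second, the diagonal constant on the $\mathbf{27}$ changes sign and magnitude: a computation of the same style as yours gives $\mathrm{i}_{\psi }\left( h\right) _{(a|mnp|}\psi _{b)}^{\ \ mnp}=-3h_{ab}$, not $+\frac{4}{3}h_{ab}$, and it is this $-3$ that produces the pair $\left( -\frac{1}{3},+\frac{1}{21}\right) $. Also, the detecting contraction for the $\mathbf{7}$ part of a $4$-form is $\varphi \lrcorner \chi $ --- a triple contraction against $\varphi $, with $\left( \varphi \lrcorner \left( \alpha \wedge \varphi \right) \right) _{d}=\left( 6+6+6-42\right) \alpha _{d}=-24\alpha _{d}$ --- not a contraction ``with $\psi $'' as you describe. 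Your alternative route through the Hodge star is legitimate but not free either: it needs the interchange formulas $\ast \left( \beta \lrcorner \psi \right) =-\beta ^{\flat }\wedge \varphi $ and $\ast \,\mathrm{i}_{\varphi }\left( h\right) =-\frac{4}{3}\mathrm{i}_{\psi }\left( h\right) $ (equation (\ref{chi427}) of the paper), which cost computations of the same size. So: the method is right and the $3$-form half is a complete proof, but the $4$-form half must actually be carried out rather than asserted, and the places where it differs from the $3$-form case are precisely the sign-sensitive spots you yourself flagged.
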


\begin{proposition}
\label{proplamb5proj}Suppose $\eta $ is a $5$-form. Then the projections $%
\pi _{7}\left( \eta \right) $ and $\pi _{14}\left( \eta \right) $ onto $%
\Lambda _{7}^{5}$ and $\Lambda _{14}^{5}$, respectively, are given by 
\begin{subequations}%
\begin{eqnarray}
\pi _{7}\left( \eta \right) &=&\alpha \wedge \psi \ \ \text{where }\alpha =%
\frac{1}{72}\psi \lrcorner \eta \\
\pi _{14}\left( \eta \right) &=&\omega \wedge \varphi \ \text{where }\omega =%
\frac{1}{9}\varphi \lrcorner \eta -\frac{1}{36}\left( \varphi \lrcorner \eta
\right) \lrcorner \psi
\end{eqnarray}%
\end{subequations}%
.
\end{proposition}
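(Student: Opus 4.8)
The plan is to recover the two projections by contracting a general $5$-form $\eta$ against the invariant forms $\psi$ and $\varphi$, using only the decomposition (\ref{l5decom}), the contraction identities of Proposition \ref{propcontractions}, and the already-established $2$-form projections of Proposition \ref{proplamb2proj}. By (\ref{l5decom}) I write $\eta=\alpha\wedge\psi+\omega\wedge\varphi$ with $\alpha\in\Lambda^1$ and $\omega\in\Lambda_{14}^2$ (so that $\omega\lrcorner\varphi=0$); the whole task is to extract $\alpha$ and $\omega$ from $\eta$.

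To get $\alpha$, I contract with the $4$-form $\psi$, obtaining the $1$-form $\psi\lrcorner\eta$. Since $\psi\lrcorner(-)$ is $G_{2}$-equivariant and $\Lambda^1=\Lambda_7^1$ carries only the $7$-dimensional representation, Schur's lemma forces $\psi\lrcorner(\omega\wedge\varphi)=0$, so only the $\Lambda_7^5$ part contributes. Expanding $\psi^{bcde}(\alpha\wedge\psi)_{bcdef}=5\,\psi^{bcde}\alpha_{[b}\psi_{cdef]}$ and repeatedly applying the self-contractions of $\psi$ that follow from (\ref{psipsi0})---in particular the full trace $\psi_{abcd}\psi^{abcd}=168$ and the three-index contraction $\psi^{bcde}\psi_{bcdf}=24\delta^e_{\ f}$---collapses the five resulting terms to $72\alpha_f$. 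Hence $\alpha=\tfrac{1}{72}\psi\lrcorner\eta$, which is the first claimed formula.

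To get $\omega$, I contract instead with the $3$-form $\varphi$, obtaining the $2$-form $\varphi\lrcorner\eta\in\Lambda^2$. Again by equivariance, the $\Lambda_7^5$ summand $\alpha\wedge\psi$ maps into $\Lambda_7^2$ (a multiple of $\alpha\lrcorner\varphi$), while the $\Lambda_{14}^5$ summand maps into $\Lambda_{14}^2$, so $\varphi\lrcorner(\omega\wedge\varphi)=c\,\omega$ for a single constant $c$. Applying the projector $\pi_{14}$ therefore annihilates the $\alpha$-contribution and returns $c\,\omega$; quoting $\pi_{14}(\mu)=\tfrac{2}{3}\mu-\tfrac{1}{6}\mu\lrcorner\psi$ from Proposition \ref{proplamb2proj} gives $\tfrac{1}{6}\pi_{14}(\varphi\lrcorner\eta)=\tfrac{1}{9}\varphi\lrcorner\eta-\tfrac{1}{36}(\varphi\lrcorner\eta)\lrcorner\psi$, which reproduces the stated expression for $\omega$ exactly when $c=6$.

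The hard part will be pinning down $c=6$ in $\varphi\lrcorner(\omega\wedge\varphi)=6\omega$. Here I expand $\varphi^{abc}(\omega\wedge\varphi)_{abcde}=10\,\varphi^{abc}\omega_{[ab}\varphi_{cde]}$, a sum of ten antisymmetrized terms, and reduce them using $\varphi_{abc}\varphi_{mn}^{\ \ \ c}=g_{am}g_{bn}-g_{an}g_{bm}+\psi_{abmn}$ from (\ref{phiphi1}), the trace $\varphi_{abc}\varphi^{abc}=42$, the defining property $\omega\lrcorner\varphi=0$, and the eigenvalue relation $\omega\lrcorner\psi=-2\omega$ valid on $\Lambda_{14}^2$ (itself read off from Proposition \ref{proplamb2proj}). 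Organizing the antisymmetrizations so that these identities combine to the clean coefficient $6$ is the one genuinely delicate computation; every other step is forced by the representation theory, so I expect the constants to drop out cleanly once the bookkeeping is done carefully.
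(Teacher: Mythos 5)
Your proposal is correct, and it splits naturally into a half that matches the paper and a half that does not. The $\pi _{7}$ part is essentially the paper's own argument: both of you write $\eta =\alpha \wedge \psi +\omega \wedge \varphi $ and contract with $\psi $ to isolate $72\alpha $; the only difference is that you kill the cross term $\psi \lrcorner \left( \omega \wedge \varphi \right) $ by Schur's lemma, while the paper computes it explicitly as $24\varphi _{m}^{\ \ ab}\omega _{ab}$ and invokes $\omega \lrcorner \varphi =0$. The $\pi _{14}$ part is where you genuinely diverge. The paper routes through the Hodge dual: it establishes $\varphi \lrcorner \eta =3\left( \ast \eta \right) \lrcorner \psi =12\pi _{7}\left( \ast \eta \right) -6\pi _{14}\left( \ast \eta \right) $ in (\ref{eta5star}), solves for $\pi _{14}\left( \ast \eta \right) $ using Proposition \ref{proplamb2proj}, and converts back to a $5$-form via $\ast \left( \mu \wedge \varphi \right) =-\mu $ for $\mu \in \Lambda _{14}^{2}$, so that $\pi _{14}\left( \eta \right) =-\pi _{14}\left( \ast \eta \right) \wedge \varphi $. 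You instead stay with $\varphi \lrcorner \eta $ itself, use equivariance to see that the $\alpha \wedge \psi $ summand lands in $\Lambda _{7}^{2}$ (hence dies under $\pi _{14}$) and that $\varphi \lrcorner \left( \omega \wedge \varphi \right) =c\,\omega $, and then pin down $c=6$ by direct contraction. Your deferred bookkeeping does close, and more easily than you fear: in the ten-term expansion of $\varphi ^{abc}\left( \omega \wedge \varphi \right) _{abcde}$, the three terms with both $\omega $-indices contracted against $\varphi $ vanish by $\omega \lrcorner \varphi =0$, the six mixed terms each give $-6\omega _{de}$ by the double contraction $\varphi _{abc}\varphi _{m}^{\ \ bc}=6g_{am}$ (a trace of (\ref{phiphi1})), and the final term gives $42\omega _{de}$, so $c=42-36=6$; the eigenvalue relation $\omega \lrcorner \psi =-2\omega $ that you listed is not actually needed. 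As for what each approach buys: the paper's route avoids any long expansion by recycling the known action of $\lrcorner \psi $ on $2$-forms, at the price of $\varepsilon $-tensor and Hodge-star manipulations, while yours is more elementary and self-contained, staying entirely at the level of the contraction identities of Proposition \ref{propcontractions} plus representation theory, at the price of one explicit ten-term computation.
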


\begin{proof}
Consider 
\begin{equation}
\eta =\alpha \wedge \psi +\omega \wedge \varphi  \label{eta5}
\end{equation}%
where $\alpha $ is a $1$-form and $\omega \in \Lambda _{14}^{2}$. Then, 
\begin{equation}
\left( \psi \lrcorner \eta \right) _{m}=5\psi ^{abcd}\alpha _{\lbrack a}\psi
_{bcd]m}+10\psi ^{abcd}\omega _{\lbrack ab}\varphi _{cd]m}
\end{equation}%
Using the contractions between $\varphi $ and $\psi $ from Proposition \ref%
{propcontractions}, we find that 
\begin{eqnarray*}
\left( \psi \lrcorner \eta \right) _{m} &=&72\alpha _{m}+24\varphi _{m}^{\ \
ab}\omega _{ab} \\
&=&72\alpha _{m}
\end{eqnarray*}%
since $\omega \lrcorner \varphi =0$ for $\omega \in \Lambda _{14}^{2}$.
Hence we get the $\pi _{7}$ projection.

Now for $\eta $ as in (\ref{eta5}), from definition of the Hodge star, we
have, 
\begin{eqnarray}
\left( \varphi \lrcorner \eta \right) _{ab} &=&\frac{1}{2}\varepsilon
_{abcde}^{\ \ \ \ \ \ \ \ \ mn}\left( \ast \eta \right) _{mn}\varphi ^{abc} 
\notag \\
&=&3\left( \ast \eta \lrcorner \psi \right) _{ab}  \notag \\
&=&12\left( \pi _{7}\ast \eta \right) _{ab}-6\left( \pi _{14}\ast \eta
\right) _{ab}  \label{eta5star}
\end{eqnarray}%
In particular, from Proposition \ref{proplamb2proj}, 
\begin{equation*}
\pi _{14}\left( \varphi \lrcorner \eta \right) =\frac{2}{3}\left( \varphi
\lrcorner \eta \right) -\frac{1}{6}\left( \varphi \lrcorner \eta \right)
\lrcorner \psi
\end{equation*}%
and so, 
\begin{equation}
\pi _{14}\left( \ast \eta \right) =-\frac{1}{9}\left( \varphi \lrcorner \eta
\right) +\frac{1}{36}\left( \varphi \lrcorner \eta \right) \lrcorner \psi
\end{equation}%
However, 
\begin{eqnarray*}
\ast \left( \pi _{14}\left( \ast \eta \right) \wedge \varphi \right) _{mn}
&=&\frac{1}{12}\varepsilon _{mn}^{\ \ \ \ \ abcde}\pi _{14}\left( \ast \eta
\right) _{ab}\varphi _{cde} \\
&=&\frac{1}{2}\left( \pi _{14}\left( \ast \eta \right) \lrcorner \psi
\right) _{mn} \\
&=&-\pi _{14}\left( \ast \eta \right) _{mn}
\end{eqnarray*}%
Hence, 
\begin{equation}
\pi _{14}\left( \eta \right) =-\pi _{14}\left( \ast \eta \right) \wedge
\varphi .
\end{equation}
\end{proof}

\section{$G_{2}$-structure torsion}

\setcounter{equation}{0}As before, suppose $M$ is a $7$-dimensional manifold
with a $G_{2}$-structure $\left( \varphi ,g\right) $. The metric $g$ defines
a reduction of the frame bundle to a principal $SO\left( 7\right) $%
-subbundle $Q$, that is, a subbundle of oriented orthonormal frames. The
metric also defines a Levi-Civita connection $\nabla $ on the tangent bundle 
$TM$, and hence on $F$. However, the $G_{2}$-invariant $3$-form $\varphi $
reduces the orthonormal bundle further to a principal $G_{2}$-subbundle $Q$.
We can then pull back the Levi-Civita connection to $Q$. On $Q$ we can
uniquely decompose $\nabla $ as 
\begin{equation}
\nabla =\bar{\nabla}+\mathcal{T}  \label{tors}
\end{equation}%
where $\bar{\nabla}$ is a $G_{2}$-compatible \emph{canonical connection }$%
\bar{\nabla}$ on $P$, taking values in $\mathfrak{g}_{2}\subset \mathfrak{so}%
\left( 7\right) $, while $\mathcal{T}$ is a $1$-form taking values in $%
\mathfrak{g}_{2}^{\perp }\subset \mathfrak{so}\left( 7\right) $. This $1$%
-form $\mathcal{T}$ is known as the $\emph{intrinsic}$ \emph{torsion} of the 
$G_{2}$-structure. The intrinsic torsion is precisely the obstruction to the
Levi-Civita connection being $G_{2}$-compatible. Note that $\mathfrak{so}%
\left( 7\right) $ splits according to $G_{2}$ representations as 
\begin{equation*}
\mathfrak{so}\left( 7\right) \cong \Lambda ^{2}V\cong \Lambda _{7}^{2}\oplus
\Lambda _{14}^{2}
\end{equation*}%
but $\Lambda _{14}^{2}\cong \mathfrak{g}_{2}$, so the complement $\mathfrak{g%
}_{2}^{\perp }\cong \Lambda _{7}^{2}\cong V$. Hence $\mathcal{T}$ can be
represented by a tensor $T_{ab}$ which lies in $W\cong V\otimes V$. Now,
since $\varphi $ is $G_{2}$-invariant, it is $\bar{\nabla}$-parallel, so the
torsion is determined by $\nabla \varphi $.

Following \cite{karigiannis-2007}, consider the $3$-form $\nabla _{X}\varphi 
$ for some vector field $X$. It is easy to see 
\begin{equation}
\nabla _{X}\varphi \in \Lambda _{7}^{3}  \label{torsphi37}
\end{equation}%
and thus overall, 
\begin{equation}
\nabla \varphi \in \Lambda _{7}^{1}\otimes \Lambda _{7}^{3}\cong W.
\label{torsphiW}
\end{equation}%
Thus $\nabla \varphi $ lies in the same space as $T_{ab}$ and thus
completely determines it. Given (\ref{torsphiW}), we can write 
\begin{equation}
\nabla _{a}\varphi _{bcd}=T_{a}^{\ \ e}\psi _{ebcd}  \label{fulltorsion}
\end{equation}%
where $T_{ab}$ is the \emph{full torsion tensor}. From this we can also
write 
\begin{equation}
T_{a}^{\ m}=\frac{1}{24}\left( \nabla _{a}\varphi _{bcd}\right) \psi ^{mbcd}.
\label{tamphipsi}
\end{equation}%
This $2$-tensor fully defines $\nabla \varphi $ since pointwise, it has 49
components and the space $W$ is also 49-dimensional (pointwise). In general
we can split $T_{ab}$ into \emph{torsion components }as 
\begin{equation}
T=\tau _{1}g+\tau _{7}\lrcorner \varphi +\tau _{14}+\tau _{27}
\label{torsioncomps}
\end{equation}%
where $\tau _{1}$ is a function, and gives the $\mathbf{1}$ component of $T$%
. We also have $\tau _{7}$, which is a $1$-form and hence gives the $\mathbf{%
7}$ component, and, $\tau _{14}\in \Lambda _{14}^{2}$ gives the $\mathbf{14}$
component and $\tau _{27}$ is traceless symmetric, giving the $\mathbf{27}$
component. Note that the normalization of these components is different from 
\cite{karigiannis-2007}. Hence we can split $W$ as 
\begin{equation}
W=W_{1}\oplus W_{7}\oplus W_{14}\oplus W_{27}.  \label{Wsplit}
\end{equation}%
Originally the torsion of $G_{2}$-structures was studied by Fern\'{a}ndez
and Gray \cite{FernandezGray}, and their analysis revealed that there are in
fact a total of 16 torsion classes of $G_{2}$-structures. Later on,
Karigiannis reproduced their results using simple computational arguments 
\cite{karigiannis-2007}.The 16 torsion classes arise as the subsets of $W$
which $\nabla \varphi $ belongs to.

Note that our notation differs from Fern\'{a}ndez and Gray. Our $\tau _{1}$
corresponds to their $\tau _{0}$, $\tau _{7}$ corresponds to $\tau _{4},$ $%
\tau _{14}$ corresponds to $\tau _{2}$ and $\tau _{27}$ corresponds to $\tau
_{3}$.

Moreover, as shown in \cite{karigiannis-2007}, the torsion components $\tau
_{i}$ relate directly to the expression for $d\varphi $ and $d\psi $. In
fact, in our notation, 
\begin{subequations}%
\label{dptors} 
\begin{eqnarray}
d\varphi &=&4\tau _{1}\psi -3\tau _{7}\wedge \varphi -\ast \tau _{27}
\label{dphitors} \\
d\psi &=&-4\tau _{7}\wedge \psi -2\ast \tau _{14}.  \label{dpsitors}
\end{eqnarray}
\end{subequations}%
Similarly to (\ref{fulltorsion}), we can express the covariant derivative of 
$\psi $ in terms of $T$.

\begin{lemma}
Given a $G_{2}$-structure defined by $3$-form $\varphi $, with torsion $%
T_{a}^{\ \ m}$ given by (\ref{tamphipsi}), the covariant derivative of the
corresponding $4$-form $\psi $ is given by 
\begin{equation}
\nabla _{a}\psi _{bcde}=-4T_{a[b}\varphi _{cde]}  \label{psitorsion}
\end{equation}
\end{lemma}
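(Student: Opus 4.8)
The plan is to prove the formula by differentiating a purely algebraic relation between $\psi$, $\varphi$ and $g$, exploiting that $\psi$ is determined pointwise by $\varphi$ and the (parallel) metric. Rearranging the first contraction identity (\ref{phiphi1}) of Proposition \ref{propcontractions} gives
\[
\psi_{bcde}=\varphi_{bcf}\,\varphi_{de}^{\ \ f}-g_{bd}g_{ce}+g_{be}g_{cd}.
\]
Applying the Levi-Civita connection $\nabla_a$ and using $\nabla g=0$, every metric term drops out, so only the two $\varphi\,\nabla\varphi$ terms survive. Substituting the defining relation (\ref{fulltorsion}), namely $\nabla_a\varphi_{bcd}=T_a^{\ e}\psi_{ebcd}$, into both factors yields
\[
\nabla_a\psi_{bcde}=T_a^{\ g}\left(\psi_{gbcf}\,\varphi_{de}^{\ \ f}+\varphi_{bcf}\,\psi_{gde}^{\ \ \ f}\right),
\]
which reduces the problem to a single algebraic identity for the bracketed $\varphi$--$\psi$ contraction.

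Next I would evaluate each of the two contractions with the mixed identity (\ref{phipsi}). Rewriting $\psi_{gbcf}\varphi_{de}^{\ \ f}=\varphi_{def}\psi_{gbc}^{\ \ \ f}$ and matching indices to (\ref{phipsi}), and likewise for $\varphi_{bcf}\psi_{gde}^{\ \ \ f}$, each term becomes a sum of products of one metric factor and one copy of $\varphi$, antisymmetrized over the index $g$ together with two of $b,c,d,e$. Contracting the whole expression against $T_a^{\ g}$ and then antisymmetrizing the free indices $b,c,d,e$ (which is automatic on the left-hand side, since $\psi$ is a $4$-form) should collapse the four antisymmetrized groups into a single term proportional to $T_{a[b}\varphi_{cde]}$.

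The main obstacle is precisely this last combinatorial step: expanding the three-index antisymmetrizations produced by (\ref{phipsi}), bookkeeping the six permutations in each, and verifying that after the outer antisymmetrization over $bcde$ and contraction against the \emph{unsymmetric} tensor $T_a^{\ g}$, the metric-trace pieces cancel and the surviving coefficient is exactly $-4$. I expect the cleanest way to control signs is to impose the antisymmetrization over $bcde$ from the outset and track each term's contribution to $T_{a[b}\varphi_{cde]}$, rather than trying to simplify the bracket in isolation.

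As an independent check --- and a shorter route --- one can instead use that the Hodge star commutes with $\nabla$ (because both $g$ and the volume form are $\nabla$-parallel), so $\nabla_a\psi=\ast(\nabla_a\varphi)$. Since $\nabla_a\varphi=\alpha\lrcorner\psi$ with $\alpha_e=T_{ae}$ lies in $\Lambda_7^3$, and $\ast$ maps $\Lambda_7^3$ isomorphically onto $\Lambda_7^4=\{\alpha\wedge\varphi\}$, $G_2$-equivariance (Schur's lemma, both spaces being the $7$-dimensional irreducible representation) forces $\ast(\alpha\lrcorner\psi)=\lambda\,\alpha\wedge\varphi$ for a universal constant $\lambda$. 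Evaluating on the canonical forms $\varphi_0,\psi_0$ with $\alpha=e^1$ then fixes $\lambda=-1$, giving $\nabla_a\psi_{bcde}=-(T_a\wedge\varphi)_{bcde}=-4T_{a[b}\varphi_{cde]}$, in agreement with the direct computation.
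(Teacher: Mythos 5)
Your primary route is essentially the paper's own proof: the paper also differentiates the contraction identity (\ref{phiphi1}), uses $\nabla g=0$, substitutes (\ref{fulltorsion}), and then invokes ``contraction identities between $\varphi$ and $\psi$'' to land on (\ref{psitorsion}) --- and, like you, it leaves the final antisymmetrization bookkeeping implicit, so you have not lost anything relative to the paper by not expanding the $T_a^{\ g}\left(\psi_{gbcf}\,\varphi_{de}^{\ \ f}+\varphi_{bcf}\,\psi_{gde}^{\ \ \ f}\right)$ term explicitly. What is genuinely different, and in fact stronger, is your second argument: since $g$ and the volume form are parallel, $\nabla_a\psi=\ast\left(\nabla_a\varphi\right)$, and since $\nabla_a\varphi=\alpha\lrcorner\psi\in\Lambda_{7}^{3}$ with $\alpha_e=T_{ae}$, equivariance of $\ast$ together with Schur's lemma (both $\Lambda_{7}^{3}$ and $\Lambda_{7}^{4}$ being copies of the $7$-dimensional irreducible representation, which is of real type) forces $\ast\left(\alpha\lrcorner\psi\right)=\lambda\,\alpha\wedge\varphi$ for a universal constant; evaluating at $\varphi_0$, $\psi_0$ with $\alpha=e^1$ gives $e_1\lrcorner\psi_0=e^{357}-e^{346}-e^{256}-e^{247}$, whose Hodge dual is $-e^{1246}+e^{1257}+e^{1347}+e^{1356}=-e^1\wedge\varphi_0$, so $\lambda=-1$ and $\nabla_a\psi_{bcde}=-4T_{a[b}\varphi_{cde]}$ follows from $\left(\alpha\wedge\varphi\right)_{bcde}=4\alpha_{[b}\varphi_{cde]}$. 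This second route is complete as it stands and buys you freedom from index combinatorics entirely, at the price of invoking the commutation of $\ast$ with $\nabla$ and the irreducible decomposition; the paper's route (your first) stays inside the algebraic identities of Proposition \ref{propcontractions} but requires the sign-and-coefficient chase you correctly identify as the main obstacle. Either argument suffices; together they cross-check the sign, which is worth having given that this paper's sign convention for $\psi$ differs from some of the literature it cites.
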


\begin{proof}
Consider the identity (\ref{phiphi1}):%
\begin{equation*}
\varphi _{abc}\varphi _{\ \ mn}^{c}=g_{am}g_{bn}-g_{an}g_{bm}+\psi _{abmn}
\end{equation*}%
Applying the covariant derivative to both sides, we get 
\begin{equation*}
\nabla _{e}\psi _{abmn}=\left( \nabla _{e}\varphi _{abc}\right) \varphi _{\
\ mn}^{c}+\varphi _{abc}\left( \nabla _{e}\varphi _{\ \ mn}^{c}\right)
\end{equation*}%
Now using (\ref{fulltorsion}) and using contraction identities between $%
\varphi $ and $\psi $, we get (\ref{psitorsion}).
\end{proof}

Suppose $d\varphi =d\psi =0$. Then this means that all four torsion
components vanish and hence $T=0$, and as a consequence $\nabla \varphi =0$.
The converse is trivially true. This result is originally due to Fern\'{a}%
ndez and Gray \cite{FernandezGray}. Moreover, a $G_{2}$-structure is
torsion-free if and only if the holonomy of the corresponding metric is
contained in $G_{2}$ \cite{Joycebook}.

The torsion tensor $T_{ab}$ and hence the individual components $\tau _{1}$,$%
\tau _{7}$,$\tau _{14}$ and $\tau _{27}$ must also satisfy certain
differential conditions. For the exterior derivative $d$, $d^{2}=0$, so from
(\ref{dptors}), must have 
\begin{eqnarray*}
d\left( 4\tau _{1}\psi -3\tau _{7}\wedge \varphi -\ast \tau _{27}\right) &=&0
\\
d\left( 4\tau _{7}\wedge \psi +2\ast \tau _{14}\right) &=&0
\end{eqnarray*}%
Alternatively, note that we have 
\begin{eqnarray*}
\left( d^{2}\varphi \right) _{abcde} &=&20\nabla _{\lbrack a}\nabla
_{b}\varphi _{cde]} \\
&=&20\nabla _{\lbrack a}T_{b}^{\ \ f}\psi _{|f|cde]}
\end{eqnarray*}%
and 
\begin{eqnarray*}
\left( d^{2}\psi \right) _{abcdef} &=&30\nabla _{\lbrack a}\nabla _{b}\psi
_{cdef]} \\
&=&30\nabla _{\lbrack a}T_{bc}\varphi _{def]}
\end{eqnarray*}%
So in particular, we get conditions 
\begin{subequations}%
\label{dtabcond} 
\begin{eqnarray}
\nabla _{\lbrack a}T_{b}^{\ \ f}\psi _{|f|cde]} &=&0 \\
\nabla _{\lbrack a}T_{bc}\varphi _{def]} &=&0
\end{eqnarray}%
\end{subequations}%
From these, we get the following conditions.

\begin{proposition}
\label{proptabconds}The torsion tensor $T_{ab}$ of a $G_{2}$-structure $%
\varphi $ satisfies the following consistency conditions

\begin{enumerate}
\item 
\begin{equation}
\varphi ^{abc}T_{bc}T_{am}^{\ \ }-T^{bd}T_{\ \ b}^{c}\varphi _{mdc}-\psi
_{m}^{\ \ abc}\nabla _{a}T_{bc}-\left( \func{Tr}T\right) \varphi _{m}^{\ \
ab}T_{ab}=0  \label{dtabcond1}
\end{equation}

\item 
\begin{equation}
\nabla _{m}\left( \func{Tr}T\right) -\nabla _{a}T_{m}^{\ \ \
a}-T_{mc}\varphi ^{abc}T_{ab}=0  \label{dtabcond2}
\end{equation}

\item 
\begin{eqnarray}
0 &=&-\varphi _{mn}^{\ \ \ \ \ c}\nabla _{c}\left( \func{Tr}T\right)
+6T_{a[m}\psi _{n]}^{\ \ abc}T_{bc}+2\left( \nabla _{a}T_{[m\left\vert
b\right\vert }^{\ \ \ }\right) \varphi _{n]}^{\ \ ab}  \label{dtabcond3} \\
&&+2\left( \nabla _{\lbrack m}T_{\left\vert ab\right\vert }\right) \varphi
_{n]}^{\ \ ab}+2\psi _{mnab}T^{ca}T_{bc}+2\left( \func{Tr}T\right) \psi
_{mn}^{\ \ \ \ \ ab}T_{ab}+  \notag \\
&&+\varphi _{mna}\varphi _{bcd}T^{cd}T^{ba}-2\varphi _{mna}\varphi
_{bcd}T^{cd}T^{ab}+2T_{[m}^{\ \ \ a}T_{\left\vert a\right\vert n]}-  \notag
\\
&&4\varphi _{\lbrack m}^{\ \ \ ab}\nabla _{|a}T_{b|n]}-2\left( \func{Tr}%
T\right) T_{\left[ mn\right] }+\varphi _{mn}^{\ \ \ \ a}\nabla _{b}T_{a}^{\
\ b}  \notag
\end{eqnarray}
\end{enumerate}
\end{proposition}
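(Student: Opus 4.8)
The plan is to extract the stated conditions by decomposing the two differential identities (\ref{dtabcond}) into their irreducible $G_2$ components. The first identity in (\ref{dtabcond}) is a $5$-form equation, living in $\Lambda^5 = \Lambda_7^5 \oplus \Lambda_{14}^5$, so it carries a $\mathbf{7}$ and a $\mathbf{14}$ piece; the second is a $6$-form equation, living in $\Lambda^6 = \Lambda_7^6 \cong V$, so it carries a single $\mathbf{7}$ piece. I therefore expect condition (\ref{dtabcond2}) to arise from the $6$-form identity, and conditions (\ref{dtabcond1}) and (\ref{dtabcond3}) to arise from the $\mathbf{7}$ and $\mathbf{14}$ projections of the $5$-form identity, respectively.

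First I would expand each identity by the Leibniz rule, separating the derivative of the torsion from the derivative of the structure forms. In the $5$-form identity, $\nabla_{[a}T_{b}^{\ f}\psi_{|f|cde]} = (\nabla_{[a}T_{b}^{\ f})\psi_{|f|cde]} + T_{[b}^{\ f}\nabla_{|a|}\psi_{|f|cde]}$, and I would substitute $\nabla_a\psi_{fcde} = -4T_{a[f}\varphi_{cde]}$ from (\ref{psitorsion}) to convert the last term into an expression quadratic in $T$. Likewise, in the $6$-form identity I would use $\nabla_a\varphi_{def} = T_a^{\ g}\psi_{gdef}$ from (\ref{fulltorsion}). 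This rewrites both identities as a sum of a \emph{linear} piece ($\nabla T$ times a structure form) and a \emph{quadratic} piece ($T\cdot T$ times a structure form).

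Next I would apply the projection formulas. For the $6$-form identity I would contract with $\psi$ to isolate its $\Lambda_7^6$ component; using the contraction identities of Proposition \ref{propcontractions}, the linear piece collapses to the divergence $\nabla_a T_m^{\ a}$ and trace $\nabla_m(\func{Tr}T)$ terms, while the quadratic piece produces $T_{mc}\varphi^{abc}T_{ab}$, giving (\ref{dtabcond2}). For the $5$-form identity I would use Proposition \ref{proplamb5proj}: contracting with $\psi$ over four indices gives the $\mathbf{7}$ component and, after reducing all $\psi\psi$ products to Kronecker deltas and single $\varphi$'s, yields (\ref{dtabcond1}) with its characteristic $\psi_m^{\ abc}\nabla_a T_{bc}$ term together with the quadratic terms $\varphi^{abc}T_{bc}T_{am}$, $T^{bd}T^{c}_{\ b}\varphi_{mdc}$ and $(\func{Tr}T)\varphi_m^{\ ab}T_{ab}$. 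Contracting instead with $\varphi$ over three indices and taking the $\Lambda_{14}$ part produces the antisymmetric condition (\ref{dtabcond3}).

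The main obstacle will be the $\mathbf{14}$ projection yielding (\ref{dtabcond3}). This step requires the cubic identity (\ref{psipsi0}) for $\psi\psi$, which generates a large number of terms, and one must carefully track all the antisymmetrizations in $m,n$, verifying that the symmetric-in-$(mn)$ contributions cancel so that the result genuinely lands in $\Lambda_{14}$. The many quadratic-in-$T$ terms such as $\varphi_{mna}\varphi_{bcd}T^{cd}T^{ba}$ and $\psi_{mnab}T^{ca}T_{bc}$, as well as the several $\nabla T$ terms antisymmetrized over $m,n$, all emerge from this reduction, and organizing them into the stated form is the bulk of the computation.
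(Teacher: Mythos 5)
Your overall strategy is the same as the paper's: expand the two identities in (\ref{dtabcond}) with the Leibniz rule, substitute (\ref{fulltorsion}) and (\ref{psitorsion}) to turn derivatives of the structure forms into quadratic-in-$T$ terms, and then split into irreducible pieces ($\Lambda _{7}^{5}\oplus \Lambda _{14}^{5}$ for the $5$-form identity, the automatic $\mathbf{7}$ for the $6$-form identity), using Propositions \ref{propcontractions} and \ref{proplamb5proj}. Your treatment of the $\mathbf{14}$ piece also agrees with the paper: (\ref{dtabcond3}) is indeed the $\Lambda _{14}^{5}$ projection of the $5$-form identity. However, your dictionary for the two $\mathbf{7}$ pieces is wrong, and the corresponding steps of your plan would fail as written. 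The $6$-form identity $\nabla _{\lbrack a}T_{bc}\varphi _{def]}=0$, dualized via $\ast \varphi =\psi $, has linear piece proportional to $\psi _{m}^{\ \ abc}\nabla _{a}T_{bc}$; it contains no divergence term $\nabla _{a}T_{m}^{\ \ a}$ and no trace-gradient term $\nabla _{m}\left( \func{Tr}T\right) $, because the indices of $T$ are contracted against the totally antisymmetric $\psi $ rather than against the derivative index. So the $6$-form identity yields (\ref{dtabcond1}), not (\ref{dtabcond2}) as you predict; the structures you claim it "collapses to" simply cannot arise from it.

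Correspondingly, the $\Lambda _{7}^{5}$ projection of the $5$-form identity does not yield (\ref{dtabcond1}) alone: the $\psi \psi $ contractions there produce \emph{both} derivative structures, and what comes out is the combination recorded in the paper as (\ref{deltabpsi2}), namely (\ref{dtabcond1}) plus twice (\ref{dtabcond2}). In the paper, condition (\ref{dtabcond2}) is only obtained at the very end by subtracting the $6$-form result (\ref{dtabcond1}) from this combination. The two routes carry the same information — the pair consisting of the $6$-form result and the $5$-form $\mathbf{7}$ projection spans the same two conditions — so your gap is repairable, but it requires correcting the predicted outcomes of both $\mathbf{7}$ projections and adding the final subtraction step; as written, neither of your two $\mathbf{7}$ computations would land on the equation you assign to it.
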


\begin{proof}
Let us first look at $\left( d^{2}\varphi \right) _{abcde}$ $=20\nabla
_{\lbrack a}T_{b}^{\ \ f}\psi _{|f|cde]}$. We have 
\begin{eqnarray*}
\nabla _{a}T_{b}^{\ \ f}\psi _{fcde} &=&(\nabla _{a}T_{b}^{\ \ f})\psi
_{fcde}+T_{b}^{\ \ f}\left( \nabla _{a}\psi _{fcde}\right) \\
&=&(\nabla _{a}T_{b}^{\ \ f})\psi _{fcde}-4T_{b}^{\ \ f}T_{[af}\varphi
_{cde]} \\
&=&(\nabla _{a}T_{b}^{\ \ f})\psi _{fcde}-\frac{4}{5}T_{b}^{\ \
f}T_{[a\left\vert f\right\vert }\varphi _{cde]}+\frac{4}{5}T_{b}^{\ \
f}T_{f[a}\varphi _{cde]}+\frac{12}{5}T_{b}^{\ \ f}T_{[ac}\varphi _{de]f}
\end{eqnarray*}%
Anti-symmetrizing we obtain%
\begin{equation}
\nabla _{\lbrack a}T_{b}^{\ \ f}\psi _{\left\vert f\right\vert cde]}=(\nabla
_{\lbrack a}T_{b}^{\ \ f})\psi _{\left\vert f\right\vert cde]}+\frac{4}{5}%
T_{[a}^{\ \ f}T_{b\left\vert f\right\vert }\varphi _{cde]}-\frac{4}{5}%
T_{[a}^{\ \ f}T_{\left\vert f\right\vert b}\varphi _{cde]}-\frac{12}{5}%
T_{[a}^{\ \ f}T_{bc}\varphi _{de]f}  \label{deltabpsi1}
\end{equation}%
Using Proposition \ref{proplamb5proj}, we find the projections of (\ref%
{deltabpsi1}) onto $\Lambda _{14}^{5}$ and $\Lambda _{7}^{5}$. Considering
the corresponding $2$-form in $\Lambda _{14}^{2}$ we obtain (\ref{dtabcond3}%
). From the $\Lambda _{7}^{5}$ component, we get 
\begin{eqnarray}
0 &=&2\nabla _{m}\left( \func{Tr}T\right) -2\nabla _{a}T_{m}^{\ \ \
a}-2T_{mc}\varphi ^{abc}T_{ab}+\varphi ^{abc}T_{bc}T_{am}^{\ \ }
\label{deltabpsi2} \\
&&-T^{bd}T_{\ \ b}^{c}\varphi _{mdc}-\psi _{m}^{\ \ abc}\nabla
_{a}T_{bc}-\left( \func{Tr}T\right) \varphi _{m}^{\ \ ab}T_{ab}  \notag
\end{eqnarray}%
However, let us now look at $\left( d^{2}\psi \right) _{abcdef}=30\nabla
_{\lbrack a}T_{bc}\varphi _{def]}$ . This is now a $6$-form, so taking the
Hodge star we get $1$-form and hence automatically another $\mathbf{7}$
component. From this we immediately obtain 
\begin{equation*}
\varphi ^{abc}T_{bc}T_{am}^{\ \ }-T^{bd}T_{\ \ b}^{c}\varphi _{mdc}-\psi
_{m}^{\ \ abc}\nabla _{a}T_{bc}-\left( \func{Tr}T\right) \varphi _{m}^{\ \
ab}T_{ab}=0
\end{equation*}%
that is, (\ref{dtabcond1}). Subtracting this condition from (\ref{deltabpsi2}%
), we obtain (\ref{dtabcond2}).
\end{proof}

The Ricci curvature of a $G_{2}$-structure manifold is determined by the
torsion tensor and its derivatives. General expressions for the Ricci
curvature has previously been given by Bryant in \cite{bryant-2003} and
Karigiannis in \cite{karigiannis-2007}. Our expression differs from the
expression in (\cite{karigiannis-2007}) due different sign convention for $%
\psi $. This also leads to a different sign for $T_{ab}$.

\begin{equation}
R_{ab}=\left( \nabla _{a}T_{nm}-\nabla _{n}T_{am}\right) \varphi _{\ \ \
b}^{nm}-T_{an}T_{\ b}^{n}+\func{Tr}\left( T\right) T_{ab}+T_{ac}T_{nm}\psi
_{\ \ \ \ \ b}^{nmc\ \ \ }  \label{torsricci}
\end{equation}%
This expression is a priori not symmetric in indices $a$ and $b$, as it
should be. However we can consider projections of the antisymmetric part of (%
\ref{torsricci}) into the $\mathbf{7}$ and $\mathbf{14}$ representations. It
turns out that the $\mathbf{7}$ component is a combination of (\ref%
{dtabcond1}) and (\ref{dtabcond2}), and hence vanishes. Similarly, the $%
\mathbf{14}$ component is proportional to (\ref{dtabcond3}), and so also
vanishes. Thus indeed, given the conditions (\ref{dtabcond1})-(\ref%
{dtabcond3}), the expression (\ref{torsricci}) for the Ricci curvature is
indeed symmetric. Therefore, the conditions in Proposition \ref{proptabconds}
are very important to ensure overall consistency.

Since we are interested in particular torsion classes, which are given by
torsion components, it is helpful to have conditions corresponding to (\ref%
{dtabcond1})-(\ref{dtabcond3}) in terms of individual torsion components $%
\tau _{1}$, $\tau _{7}$, $\tau _{14}$ and $\tau _{27}\,$.

\begin{proposition}
\label{PropTorsConds}Given the decomposition (\ref{torsioncomps}) of the
full torsion tensor $T_{ab}$ into components $\tau _{1}$, $\tau _{7}$, $\tau
_{14}$ and $\tau _{27}$, these components satisfy the following consistency
conditions:

\begin{enumerate}
\item 
\begin{equation}
\nabla _{a}\left( \tau _{14}\right) _{\ \ m}^{a}+2\varphi _{m}^{\ \ \
ab}\nabla _{a}\left( \tau _{7}\right) _{b}+4\left( \tau _{7}\right)
_{a}\left( \tau _{14}\right) _{\ \ m}^{a}=0  \label{dtabcond1c}
\end{equation}

\item 
\begin{equation}
\nabla _{m}\tau _{1}-\frac{1}{2}\varphi _{m}^{\ \ \ ab}\nabla _{a}\left(
\tau _{7}\right) _{b}-\frac{1}{6}\nabla _{a}\left( \tau _{27}\right) _{\ \
m}^{a}-\left( \tau _{7}\right) _{a}\left( \tau _{27}\right) _{\ \
m}^{a}-\left( \tau _{7}\right) _{m}\tau _{1}=0  \label{dtabcond2c}
\end{equation}

\item 
\begin{eqnarray}
0 &=&\varphi _{mna}\nabla _{b}\left( \tau _{27}\right) ^{ab}+6\nabla
_{a}\left( \tau _{27}\right) _{b[m}\varphi _{n]}^{\ \ ab}-24\tau _{1}\left(
\tau _{14}\right) _{mn}  \label{dtabcond3c} \\
&&-18\left( \frac{2}{3}\nabla _{\lbrack m}\left( \tau _{7}\right) _{n]}-%
\frac{1}{6}\psi _{mn}^{\ \ \ \ ab}\nabla _{a}\left( \tau _{7}\right)
_{b}\right)  \notag \\
&&-18\left( \frac{2}{3}\left( \tau _{14}\right) _{a[m}\left( \tau
_{27}\right) _{n]}^{\ \ a}-\frac{1}{6}\psi _{mn}^{\ \ \ \ ab}\left( \tau
_{14}\right) _{\ \ a}^{c}\left( \tau _{27}\right) _{bc}\right)  \notag
\end{eqnarray}
\end{enumerate}
\end{proposition}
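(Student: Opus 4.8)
The plan is to substitute the decomposition
$T_{ab}=\tau_{1}g_{ab}+\left(\tau_{7}\right)^{c}\varphi_{cab}+\left(\tau_{14}\right)_{ab}+\left(\tau_{27}\right)_{ab}$
into each of the three conditions (\ref{dtabcond1})--(\ref{dtabcond3}) of Proposition \ref{proptabconds}, and then to reduce every resulting term to the individual components by repeated use of the contraction identities of Proposition \ref{propcontractions}. Since $\tau_{7}\lrcorner\varphi$ and $\tau_{14}$ are antisymmetric while $\tau_{1}g$ and $\tau_{27}$ are symmetric, and since $\func{Tr}T=7\tau_{1}$, a number of terms either vanish or collapse immediately; in particular $\varphi^{abc}\left(\tau_{14}\right)_{bc}=0$ by the defining property $\tau_{14}\lrcorner\varphi=0$ of $\Lambda_{14}^{2}$, while $\varphi^{abc}\left(\tau_{7}\right)^{e}\varphi_{ebc}=6\left(\tau_{7}\right)^{a}$ from the double contraction of (\ref{phiphi1}). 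The one subtlety at the outset is that whenever a covariant derivative falls on the $\varphi$ inside $\tau_{7}\lrcorner\varphi$, one must rewrite $\nabla\varphi$ through (\ref{fulltorsion}), which produces additional quadratic-in-$T$ contributions.

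First I would treat the two $\mathbf{7}$-valued equations, which originate from the $\mathbf{7}$ conditions (\ref{dtabcond1}) (coming from $d^{2}\psi=0$) and (\ref{dtabcond2}). Substituting into (\ref{dtabcond2}), the trace term yields $7\nabla_{m}\tau_{1}$, while the divergence $\nabla_{a}T_{m}^{\ a}$ splits into $\nabla_{m}\tau_{1}$, a piece $\varphi_{m}^{\ ab}\nabla_{a}\left(\tau_{7}\right)_{b}$, the divergences $\nabla_{a}\left(\tau_{14}\right)_{\ m}^{a}$ and $\nabla_{a}\left(\tau_{27}\right)_{\ m}^{a}$, and the quadratic term from $\nabla\varphi$; the remaining quadratic term $T_{mc}\varphi^{abc}T_{ab}$ reduces via the two observations above to $6\tau_{1}\left(\tau_{7}\right)_{m}+6\left(\tau_{7}\right)^{c}\left(\tau_{14}\right)_{mc}+6\left(\tau_{7}\right)^{c}\left(\tau_{27}\right)_{mc}$. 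The reduction of (\ref{dtabcond1}) is organized the same way, using (\ref{phipsi}) to contract $\psi_{m}^{\ abc}\nabla_{a}T_{bc}$ and noting that the symmetric $\tau_{27}$ drops out against the antisymmetric $\psi$. The bookkeeping device is to track which irreducible piece of $T$ feeds each term: after the $\tau_{1}\left(\tau_{7}\right)_{m}$ cross-terms cancel, the $\tau_{7}$--$\tau_{14}$ content collects into (\ref{dtabcond1c}) and the $\tau_{1}$--$\tau_{7}$--$\tau_{27}$ content into (\ref{dtabcond2c}).

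For the $\mathbf{14}$-valued condition (\ref{dtabcond3c}) I would substitute the decomposition into (\ref{dtabcond3}) and then project the resulting $2$-form onto $\Lambda_{14}^{2}$ using $\pi_{14}\left(\omega\right)=\tfrac{2}{3}\omega-\tfrac{1}{6}\omega\lrcorner\psi$ from Proposition \ref{proplamb2proj}; this is exactly what produces the recurring $\tfrac{2}{3}\left(\cdot\right)_{[mn]}-\tfrac{1}{6}\psi_{mn}^{\ \ ab}\left(\cdot\right)_{ab}$ structure visible in the stated identity, both for the $\nabla\left(\tau_{7}\right)$ terms and for the $\tau_{14}$--$\tau_{27}$ terms. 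The derivative contributions give $\varphi_{mna}\nabla_{b}\left(\tau_{27}\right)^{ab}$, $\nabla_{a}\left(\tau_{27}\right)_{b[m}\varphi_{n]}^{\ ab}$ and $\nabla_{[m}\left(\tau_{7}\right)_{n]}$, while the quadratic pieces $T^{ca}T_{bc}$, $T_{[m}^{\ a}T_{|a|n]}$ and $\left(\func{Tr}T\right)T$ expand, through (\ref{phiphi1}) and (\ref{psipsi0}), into $\tau_{1}\left(\tau_{14}\right)_{mn}$ and the two $\tau_{14}$--$\tau_{27}$ combinations.

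The main obstacle will be this $\mathbf{14}$ computation. The quadratic-in-$T$ terms mix all four components, so each product such as $\left(\tau_{7}\lrcorner\varphi\right)\cdot\tau_{27}$ or $\tau_{14}\cdot\tau_{27}$ must be fully expanded with the $\varphi\varphi$, $\varphi\psi$ and $\psi\psi$ identities, after which one must correctly discard the $\mathbf{1}$, $\mathbf{7}$ and $\mathbf{27}$ parts and retain only the $\Lambda_{14}^{2}$ projection. Keeping the signs and antisymmetrization weights consistent through that projection---especially in the terms where $\nabla\varphi$ has itself been traded for torsion---is where errors are most likely to enter, and it is the step I would verify most carefully, for instance by cross-checking against the antisymmetric part of the Ricci identity (\ref{torsricci}).
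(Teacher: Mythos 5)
Your proposal is correct and follows essentially the same route the paper intends: Proposition \ref{PropTorsConds} is presented with no separate proof, precisely as the rewriting of conditions (\ref{dtabcond1})--(\ref{dtabcond3}) of Proposition \ref{proptabconds} in terms of the decomposition (\ref{torsioncomps}), which is exactly your substitution-and-reduction strategy, including the need to trade $\nabla\varphi$ for torsion via (\ref{fulltorsion}) when derivatives hit the $\tau_{7}\lrcorner\varphi$ piece. Your spot computations ($\func{Tr}T=7\tau_{1}$, $\varphi^{abc}T_{ab}=6\left(\tau_{7}\right)^{c}$, the vanishing of contractions of the symmetric pieces against $\psi$, and the $\pi_{14}$ origin of the recurring $\tfrac{2}{3}\left(\cdot\right)_{[mn]}-\tfrac{1}{6}\psi_{mn}^{\ \ ab}\left(\cdot\right)_{ab}$ structure in (\ref{dtabcond3c})) are all consistent with the paper's identities.
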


For some of the torsion classes, these conditions simplify, which we
summarize below.%
\begin{equation*}
\begin{tabular}{|l|l|l|}
\hline
\textbf{Torsion class} & \textbf{Condition} & \textbf{In coordinates} \\ 
\hline
${\small W}_{1}$ & ${\small d\tau }_{1}{\small =0}$ & ${\small \nabla }_{m}%
{\small \tau }_{1}{\small =0}$ \\ \hline
${\small W}_{7}$ & ${\small d\tau }_{7}{\small =0}$ & ${\small \nabla }%
_{[m}\left( \tau _{7}\right) _{n]}{\small =0}$ \\ \hline
${\small W}_{14}$ & ${\small d}^{\ast }{\small \tau }_{14}{\small =0}$ & $%
{\small \nabla }_{a}\left( \tau _{14}\right) _{\ \ m}^{a}{\small =0}$ \\ 
\hline
${\small W}_{27}$ & ${\small d}^{\ast }{\small i}_{3}\left( \tau
_{27}\right) {\small =0}$ & 
\begin{tabular}{l}
${\small \nabla }_{a}\left( \tau _{27}\right) _{\ \ m}^{a}{\small =0}$ \\ 
${\small \nabla }_{a}\left( \tau _{27}\right) _{b[m}{\small \varphi }%
_{n]}^{\ \ ab}{\small =0}$%
\end{tabular}
\\ \hline
${\small W}_{1}{\small \oplus W}_{7}$ & 
\begin{tabular}{l}
${\small \tau }_{7}{\small =}d\left( \log \tau _{1}\right) {\small \ }\text{%
if }{\small \tau }_{1}~$nowhere zero \\ 
${\small d\tau }_{7}{\small =0}\text{ and }{\small \tau }_{1}{\small =0\ }%
\text{otherwise}$%
\end{tabular}
&  \\ \hline
${\small W}_{1}{\small \oplus W}_{14}$ & $\tau _{1}=0${\small \ \ or }$\tau
_{14}=0$ &  \\ \hline
${\small W}_{1}{\small \oplus W}_{27}$ & $%
\begin{tabular}{l}
${\small \pi }_{7}\left( d^{\ast }i_{3}\left( \tau _{27}\right) \right) 
{\small =-}\frac{11}{3}\left( d\tau _{1}\right) {\small \lrcorner \varphi }$
\\ 
${\small \pi }_{14}\left( d^{\ast }i_{3}\left( \tau _{27}\right) \right) 
{\small =0}$%
\end{tabular}%
$ & $%
\begin{tabular}{l}
${\small \nabla }_{a}\left( \tau _{27}\right) _{\ \ m}^{a}{\small =6\nabla }%
_{m}{\small \tau }_{1}$ \\ 
${\small \nabla }_{a}\left( \tau _{27}\right) _{b[m}{\small \varphi }%
_{n]}^{\ \ ab}{\small +}\frac{1}{6}{\small \varphi }_{mna}{\small \nabla }%
_{b}\left( \tau _{27}\right) ^{ab}{\small =0}$%
\end{tabular}%
$ \\ \hline
${\small W}_{7}{\small \oplus W}_{14}$ & $%
\begin{tabular}{l}
${\small d\tau }_{7}{\small =0}$ \\ 
${\small d}^{\ast }{\small \tau }_{14}{\small =4\tau }_{7}{\small \lrcorner
\tau }_{14}$%
\end{tabular}%
$ & $%
\begin{tabular}{l}
${\small \nabla }_{[m}\left( \tau _{7}\right) _{n]}{\small =0}$ \\ 
${\small \nabla }_{a}\left( \tau _{14}\right) _{\ \ m}^{a}{\small +4}\left(
\tau _{7}\right) _{a}\left( \tau _{14}\right) _{\ \ m}^{a}{\small =0}$%
\end{tabular}%
$ \\ \hline
\end{tabular}%
\end{equation*}%
To obtain these conditions, we simply use the expressions (\ref{dtabcond1c})
to (\ref{dtabcond3c}), and set the relevant torsion components to zero. For
the class $W_{1}\oplus W_{7}$, the characterization that either $\tau _{7}$
is the gradient of $\log \tau _{1}$ or $\tau _{1}$ is zero everywhere was
given by Cleyton and Ivanov in (\cite{CleytonIvanovConf}).

\section{Deformations of $G_{2}$-structures}

\label{secdeform}\setcounter{equation}{0}Suppose we have a $G_{2}$-structure
on $M$ defined by the $3$-form $\varphi $. In \cite%
{GrigorianYau1,GrigorianG2Review} we considered small deformations of $G_{2}$%
-structures and then expanded related quantities such as the metric $g$, the
volume form $\sqrt{\det g}$ and the $4$-form $\psi $ up to a certain order
in the small parameter. We will now deduce some results about more general
deformations. Suppose we have a deformation for some $3$-form $\chi $ 
\begin{equation}
\varphi \longrightarrow \tilde{\varphi}=\varphi +\chi  \label{phideform1}
\end{equation}%
In \cite{GrigorianYau1,GrigorianG2Review} it was pointed out that
generically it is difficult to obtain a closed form expression for $\tilde{g}
$ and $\tilde{\psi}$. One of the challenges was to obtain a closed form
expression for $\det \tilde{g}$. However it turns out that there is an easy
way to do this, even if obtaining the full explicit expression is still
computationally challenging. Note that we will use upper indices with tilde
to denote indices raised with the deformed metric $\tilde{g}$.

\begin{lemma}
Given a deformation of $\varphi $ as in (\ref{phideform1}), the related
quantities $\tilde{g}$, $\tilde{\psi}$ and $\det g$ are given by: 
\begin{subequations}%
\label{quantdeform}

\begin{eqnarray}
\tilde{g}_{ab} &=&\left( \frac{\det g}{\det \tilde{g}}\right) ^{\frac{1}{2}%
}s_{ab}  \label{gtildedeform1} \\
\tilde{\psi} &=&\left( \frac{\det g}{\det \tilde{g}}\right) ^{\frac{5}{2}}%
\tilde{\eta}  \label{psideform1}
\end{eqnarray}%
\end{subequations}%
where 
\begin{equation}
s_{ab}=g_{ab}+\frac{1}{2}\chi _{mn(a}\varphi _{b)}^{\ \ mn}+\frac{1}{8}\chi
_{amn}\chi _{bpq}\psi ^{mnpq}+\frac{1}{24}\chi _{amn}\chi _{bpq}\left( \ast
\chi \right) ^{mnpq}  \label{sabgen}
\end{equation}%
Moreover, 
\begin{equation}
\tilde{\psi}^{\tilde{a}\tilde{b}\tilde{c}\tilde{d}}=\tilde{g}^{am}\tilde{g}%
^{bn}\tilde{g}^{cp}\tilde{g}^{dq}\tilde{\psi}_{mnpq}=\left( \frac{\det g}{%
\det \tilde{g}}\right) ^{\frac{1}{2}}\left( \psi ^{mnpq}+\ast \chi
^{mnpq}\right)  \label{psiraised}
\end{equation}
\end{lemma}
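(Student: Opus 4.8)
The plan is to reduce everything to Hitchin's pointwise construction of the metric, (\ref{metricdefdirect})--(\ref{sabdef}), applied to the deformed form $\tilde{\varphi}=\varphi+\chi$, and then to expand in powers of $\chi$. First I note that (\ref{metricdefdirect}) holds for \emph{any} positive $3$-form through its own induced metric: taking determinants of (\ref{metricdefdirect}) gives $\det s=(\det g)^{9/2}$, hence $(\det s)^{1/9}=(\det g)^{1/2}$ and therefore $s_{ab}=(\det g)^{1/2}g_{ab}$. Applying the same reasoning to $\tilde{\varphi}$ gives $\tilde{s}_{ab}=(\det\tilde{g})^{1/2}\tilde{g}_{ab}$, where $\tilde{s}_{ab}=\frac{1}{144}\tilde{\varphi}_{amn}\tilde{\varphi}_{bpq}\tilde{\varphi}_{rst}\hat{\varepsilon}^{mnpqrst}$. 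Writing $\hat{\varepsilon}^{mnpqrst}=(\det g)^{1/2}\varepsilon^{mnpqrst}$ with $\varepsilon$ the Levi-Civita tensor of the \emph{original} metric $g$, the claim (\ref{gtildedeform1})--(\ref{sabgen}) is equivalent to showing that $\frac{1}{144}\tilde{\varphi}_{amn}\tilde{\varphi}_{bpq}\tilde{\varphi}_{rst}\varepsilon^{mnpqrst}$ coincides with the tensor $s_{ab}$ of (\ref{sabgen}), all contractions now using $g$; dividing by $(\det\tilde{g})^{1/2}$ then produces the stated determinant ratio.

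The main computation is this expansion. Substituting $\tilde{\varphi}=\varphi+\chi$ and collecting by degree in $\chi$ (the expression is homogeneous cubic in $\tilde\varphi$), I would repeatedly use $\alpha_{rst}\varepsilon^{mnpqrst}=6(\ast\alpha)^{mnpq}$ for any $3$-form $\alpha$, so that $\varphi_{rst}\varepsilon^{mnpqrst}=6\psi^{mnpq}$ and $\chi_{rst}\varepsilon^{mnpqrst}=6(\ast\chi)^{mnpq}$. The degree-$0$ term is $\frac{1}{24}\varphi_{amn}\varphi_{bpq}\psi^{mnpq}$, which equals $g_{ab}$ by a contraction of the identities in Proposition \ref{propcontractions}. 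The degree-$3$ term is immediately $\frac{1}{24}\chi_{amn}\chi_{bpq}(\ast\chi)^{mnpq}$, matching the last term of (\ref{sabgen}). Each of degrees $1$ and $2$ has three orderings (according to which of the three factors carries $\chi$), and the content of the lemma is that, after symmetrisation in $a,b$, these recombine into the single terms displayed in (\ref{sabgen}).

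For degree $1$ the three terms reduce, using the $\varphi$--$\psi$ contraction (\ref{phipsi}), to $\frac{1}{2}\chi_{mn(a}\varphi_{b)}{}^{mn}$. For degree $2$ they must collapse to $\frac{1}{8}\chi_{amn}\chi_{bpq}\psi^{mnpq}$; here one ordering already gives $\frac{1}{24}\chi_{amn}\chi_{bpq}\psi^{mnpq}$ directly, while the other two produce cross terms of the schematic form $\varphi\,\chi\,(\ast\chi)$. The heart of the proof — and the step I expect to be the main obstacle — is showing that these cross terms recombine into twice the $\chi\chi\psi$ term; equivalently, that the sum of the three wedge products $(u\lrcorner\chi)\wedge(v\lrcorner\varphi)\wedge\chi$, $(u\lrcorner\varphi)\wedge(v\lrcorner\chi)\wedge\chi$ and $(u\lrcorner\chi)\wedge(v\lrcorner\chi)\wedge\varphi$ reduces to three copies of the last. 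This is a Hodge-duality rearrangement combined with the quadratic identity (\ref{phiphi1}), and it is where all the bookkeeping lives; the analogous rearrangement handles the degree-$1$ terms as well. Assembling the four pieces yields $\tilde{s}_{ab}=(\det g)^{1/2}s_{ab}$, i.e. (\ref{gtildedeform1}).

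Finally, for the $4$-form I would treat (\ref{psiraised}) first, since it is the clean statement. Using $\tilde{\psi}=\ast_{\tilde{g}}\tilde{\varphi}$ and the coordinate formula for the fully raised Hodge dual, $(\ast_h\alpha)^{abcd}=\frac{1}{6\sqrt{\det h}}\hat{\varepsilon}^{abcdefg}\alpha_{efg}$ with indices raised by $h$, taken at $h=\tilde{g}$ and $\alpha=\varphi+\chi$, and again converting $\hat{\varepsilon}=(\det g)^{1/2}\varepsilon$, gives $\tilde{\psi}^{\tilde{a}\tilde{b}\tilde{c}\tilde{d}}=(\det g/\det\tilde{g})^{1/2}(\psi^{mnpq}+(\ast\chi)^{mnpq})$, which is exactly (\ref{psiraised}); notably this step needs no $G_2$ contraction identities, only the conversion of the alternating symbol to the metric volume. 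Then (\ref{psideform1}) follows by lowering the four indices of (\ref{psiraised}) with $\tilde{g}$ and substituting (\ref{gtildedeform1}): each factor $\tilde{g}$ equals $(\det g/\det\tilde{g})^{1/2}$ times $s$, so the four factors, together with the $(\det g/\det\tilde{g})^{1/2}$ already present, raise the power to $5/2$, with $\tilde{\eta}_{mnpq}=s_{ma}s_{nb}s_{pc}s_{qd}(\psi^{abcd}+(\ast\chi)^{abcd})$.
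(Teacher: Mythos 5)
Your treatment of the two $4$-form statements (\ref{psideform1}) and (\ref{psiraised}) is correct and is essentially the paper's own argument: the paper writes $\tilde{\psi}_{abcd}=\frac{1}{3!}\left( \det \tilde{g}\right) ^{-\frac{1}{2}}\hat{\varepsilon}^{mnpqrst}\left( \varphi _{rst}+\chi _{rst}\right) \tilde{g}_{ma}\tilde{g}_{nb}\tilde{g}_{pc}\tilde{g}_{qd}$, recognizes the $\hat{\varepsilon}$-contraction as $\left( \det g\right) ^{\frac{1}{2}}\left( \psi ^{mnpq}+\ast \chi ^{mnpq}\right) $, substitutes (\ref{gtildedeform1}) to produce the power $\frac{5}{2}$, and obtains (\ref{psiraised}) by raising indices; you run the identical computation in the reverse order (raised indices first, then lowering with $\tilde{g}$), which changes nothing of substance. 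The genuine divergence is in the first half of the lemma: the paper gives no proof of (\ref{gtildedeform1})--(\ref{sabgen}) at all, quoting it from \cite{GrigorianYau1}, whereas you attempt to derive it. Your reduction is set up correctly: Hitchin's construction applied to $\varphi $ itself gives $s_{ab}=\left( \det g\right) ^{\frac{1}{2}}g_{ab}$ in the notation of (\ref{sabdef}), and the conversion $\hat{\varepsilon}^{mnpqrst}=\left( \det g\right) ^{\frac{1}{2}}\varepsilon ^{mnpqrst}$ correctly reduces the claim to the purely algebraic statement that $\frac{1}{144}\tilde{\varphi}_{amn}\tilde{\varphi}_{bpq}\tilde{\varphi}_{rst}\varepsilon ^{mnpqrst}$ equals the right-hand side of (\ref{sabgen}); your degree-$0$ and degree-$3$ terms are also right.

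However, there is a real gap at exactly the point you flag: the degree-$1$ and degree-$2$ recombinations are asserted, not proven, and they are the entire nontrivial content of (\ref{sabgen}). Concretely, what must be shown is that for an \emph{arbitrary} $3$-form $\chi $,
\begin{equation*}
\varphi _{amn}\varphi _{bpq}\left( \ast \chi \right) ^{mnpq}=4\chi
_{mn(a}\varphi _{b)}^{\ \ mn},\qquad \chi _{amn}\varphi _{bpq}\left( \ast
\chi \right) ^{mnpq}+\varphi _{amn}\chi _{bpq}\left( \ast \chi \right)
^{mnpq}=2\chi _{amn}\chi _{bpq}\psi ^{mnpq},
\end{equation*}
since only the two degree-$1$ orderings in which $\chi $ carries a free index follow from a contraction of (\ref{phipsi}), and these two identities are precisely what generate the coefficients $\frac{1}{2}$ and $\frac{1}{8}$ in (\ref{sabgen}). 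Neither identity appears in Proposition \ref{propcontractions}; both involve the Hodge dual of a general $3$-form, so they require expanding $\left( \ast \chi \right) ^{mnpq}=\frac{1}{3!}\chi _{rst}\varepsilon ^{rstmnpq}$ and contracting the resulting expressions via (\ref{phiphi1})--(\ref{psipsi0}) (or decomposing $\chi $ into its $\Lambda _{1}^{3}\oplus \Lambda _{7}^{3}\oplus \Lambda _{27}^{3}$ pieces and checking each). This is a substantial computation, not a routine rearrangement, and your proposal explicitly defers it. So, as written, you have fully proven the half of the lemma that the paper itself proves, and for the half that the paper outsources to \cite{GrigorianYau1} you have a correct and (I believe) completable strategy, but not a proof.
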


\begin{proof}
From \cite{GrigorianYau1} we know the expression (\ref{gtildedeform1}) with $%
\tilde{s}_{ab}$ as in (\ref{sabgen}). Now, 
\begin{eqnarray*}
\tilde{\psi}_{abcd} &=&\tilde{\ast}\left( \varphi +\chi \right) _{abcd} \\
&=&\frac{1}{3!}\frac{1}{\sqrt{\det \tilde{g}}}\hat{\varepsilon}%
^{mnpqrst}\left( \varphi _{rst}+\chi _{rst}\right) \tilde{g}_{ma}\tilde{g}%
_{nb}\tilde{g}_{pc}\tilde{g}_{qd}
\end{eqnarray*}%
Here $\tilde{\varepsilon}^{abcdrst}$ refers to the alternating symbol which
takes values $0$ and $\pm 1$. Hence, using (\ref{gtildedeform1}), we get 
\begin{eqnarray}
\tilde{\psi}_{abcd} &=&\left( \frac{\det g}{\det \tilde{g}}\right) ^{\frac{1%
}{2}}\left( \psi ^{mnpq}+\ast \chi ^{mnpq}\right) \tilde{g}_{ma}\tilde{g}%
_{nb}\tilde{g}_{pc}\tilde{g}_{qd}  \label{psitildedeform1a} \\
&=&\left( \frac{\det g}{\det \tilde{g}}\right) ^{\frac{5}{2}}\left( \psi
^{mnpq}+\ast \chi ^{mnpq}\right) s_{ma}s_{nb}s_{pc}s_{qd}  \notag
\end{eqnarray}%
which is precisely (\ref{psideform1}). Incidentally, by raising indices in (%
\ref{psitildedeform1a}) using $\tilde{g}$, we obtain (\ref{psiraised}).
\end{proof}

Another possible simplification is to use contraction formulae for $\tilde{%
\varphi}$ and $\tilde{\psi}$. Since $\tilde{\varphi}$ defines a $G_{2}$%
-structure, $\tilde{\varphi}$ and $\tilde{\psi}$ satisfy the same identities
as $\varphi $ and $\psi $ in Proposition \ref{propcontractions}. So, in
particular, we have%
\begin{eqnarray}
\tilde{\varphi}_{a}^{\ \ \tilde{b}\tilde{c}} &=&\frac{1}{4}\tilde{\varphi}%
_{amn}\tilde{\psi}^{\tilde{m}\tilde{n}\tilde{b}\tilde{c}}=\frac{1}{4}\left( 
\frac{\det g}{\det \tilde{g}}\right) ^{\frac{1}{2}}\left( \varphi
_{amn}+\chi _{amn}\right) \left( \psi ^{mnbc}+\ast \chi ^{mnbc}\right) 
\notag \\
&=&\frac{1}{4}\left( \frac{\det g}{\det \tilde{g}}\right) ^{\frac{1}{2}%
}\left( 4\varphi _{a}^{\ \ bc}+\varphi _{amn}\ast \chi ^{mnbc}+\chi
_{amn}\psi ^{mnbc}+\chi _{amn}\ast \chi ^{mnbc}\right)
\label{phitild2raised}
\end{eqnarray}%
This expression is very simple from computational point of view, since it
does not involve the quantity $s_{ab}$ (apart from the determinant factor).
In our example for $\chi \in \Lambda _{7}^{3}$, this becomes 
\begin{equation}
\tilde{\varphi}_{a}^{\ \ \tilde{b}\tilde{c}}=\left( 1+M\right) ^{-\frac{2}{3}%
}\left( \varphi _{a}^{\ bc}-\chi _{a}^{\ \ bc}+v^{b}v_{m}\varphi _{a\ \ \
}^{\ \ cm}-v^{c}v_{m}\varphi _{a\ \ \ \ }^{\ \ bm}\right)
\label{phitildpi7raised2}
\end{equation}%
We can use (\ref{phitild2raised}) together with other contraction identities
to get closed expressions for the inverse metric $\tilde{g}^{\tilde{a}\tilde{%
b}}$ and the determinant $\det \tilde{g}$.

\begin{proposition}
Given a deformation of $\varphi $ as in (\ref{phideform1}), and the
corresponding deformation of the metric (\ref{gtildedeform1}), the deformed
inverse metric is given by 
\begin{equation}
\tilde{g}^{\tilde{a}\tilde{m}}=\left( \frac{\det g}{\det \tilde{g}}\right)
\gamma ^{am}  \label{gtildinv}
\end{equation}%
where 
\begin{eqnarray}
\gamma ^{am} &=&g^{am}-\frac{1}{96}\ast \chi _{\ \ bcd}^{a}\ast \chi _{\ \ \
pqr}^{m}\varphi ^{bcp}\varphi ^{dqr}-\frac{1}{48}\ast \chi _{\ \
bcd}^{(a}\ast \chi _{\ \ \ pqr}^{m)}\varphi ^{bcp}\chi ^{dqr}
\label{gammaam} \\
&&-\frac{1}{48}\ast \chi _{\ \ bcd}^{(a}\psi _{\ \ \ pqr}^{m)}\chi
^{bcp}\chi ^{dqr}-\frac{1}{96}\psi _{\ \ bcd}^{a}\psi _{\ \ \ pqr}^{m}\chi
^{bcp}\chi ^{dqr}-\frac{1}{96}\ast \chi _{\ \ bcd}^{a}\ast \chi _{\ \ \
pqr}^{m}\chi ^{bcp}\chi ^{dqr}  \notag \\
&&-\frac{1}{4}\chi _{\ \ \ bc}^{(a}\varphi _{\ \ }^{m)bc}+\frac{1}{6}\ast
\chi _{\ \ bcd}^{(a}\varphi _{\ \ \ \ \ e}^{m)b}\chi ^{cde}+\frac{1}{12}\ast
\chi _{\ \ bcd}^{(a}\psi ^{m)bcd}+\frac{1}{12}g^{am}\chi ^{bcd}\varphi _{bcd}
\notag
\end{eqnarray}%
and 
\begin{equation}
\left( \frac{\det \tilde{g}}{\det g}\right) ^{\frac{3}{2}}=\frac{1}{7}\gamma
^{am}s_{am}  \label{detgtildgen}
\end{equation}%
for $s_{am}$ as in (\ref{sabgen}).
\end{proposition}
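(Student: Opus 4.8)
The plan is to establish the inverse-metric formula (\ref{gtildinv})--(\ref{gammaam}) first and then read off the determinant formula (\ref{detgtildgen}), since the two are tightly linked. Indeed, once (\ref{gtildinv}) is available, I would simply contract it with $s_{am}$: using $\tilde{g}_{am}=(\det g/\det \tilde{g})^{1/2}s_{am}$ (equivalently $s_{am}=(\det \tilde{g}/\det g)^{1/2}\tilde{g}_{am}$) together with $\tilde{g}^{\tilde{a}\tilde{m}}\tilde{g}_{am}=7$, one gets $7(\det \tilde{g}/\det g)^{1/2}=(\det g/\det \tilde{g})\gamma ^{am}s_{am}$, which rearranges exactly to $(\det \tilde{g}/\det g)^{3/2}=\frac{1}{7}\gamma ^{am}s_{am}$. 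So the entire content is the closed form for $\tilde{g}^{\tilde{a}\tilde{m}}$.

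For the inverse metric I would start from the purely algebraic $G_{2}$ identity, valid for \emph{any} positive $3$-form and its induced metric, namely $6g^{bc}=-\varphi _{a}^{\ bm}\varphi _{m}^{\ ca}$, which follows by lowering all indices and applying the contracted form of (\ref{phiphi1}). Because $\tilde{\varphi}$ is itself a $G_{2}$-structure, the same identity holds for it: $6\tilde{g}^{\tilde{b}\tilde{c}}=-\tilde{\varphi}_{a}^{\ \tilde{b}\tilde{m}}\tilde{\varphi}_{m}^{\ \tilde{c}\tilde{a}}$. The crucial point is the index placement: every repeated index occurs once as a subscript and once as a $\tilde{g}$-raised superscript, so this is a \emph{natural} contraction introducing no further factors of $\tilde{g}^{-1}$. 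This is precisely what avoids the circularity one meets if the free upper indices are instead produced by explicit inverse metrics (any such attempt collapses to a tautology).

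Next I would rewrite (\ref{phitild2raised}) in the equivalent product form $\tilde{\varphi}_{a}^{\ \tilde{b}\tilde{c}}=\frac{1}{4}(\det g/\det \tilde{g})^{1/2}(\varphi +\chi )_{amn}(\psi +\ast \chi )^{mnbc}$ --- the bracket in (\ref{phitild2raised}) is exactly this product once one uses the contraction $\varphi _{amn}\psi ^{mnbc}=4\varphi _{a}^{\ bc}$ from Proposition \ref{propcontractions}. Substituting this into both factors of $6\tilde{g}^{\tilde{b}\tilde{c}}=-\tilde{\varphi}_{a}^{\ \tilde{b}\tilde{m}}\tilde{\varphi}_{m}^{\ \tilde{c}\tilde{a}}$, the two half-powers of the determinant ratio combine to the single factor $(\det g/\det \tilde{g})$ claimed in (\ref{gtildinv}), leaving
\[
\gamma ^{bc}=-\tfrac{1}{96}(\varphi +\chi )_{apq}(\varphi +\chi )_{mrs}(\psi +\ast \chi )^{pqbm}(\psi +\ast \chi )^{rsca}.
\]
Expanding the two binomials $(\varphi +\chi )$ and $(\psi +\ast \chi )$ produces sixteen monomials. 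I would then reduce them with the identities of Proposition \ref{propcontractions} and their contractions: the terms in which a $\psi $ meets a $\varphi $ collapse (via $\varphi _{amn}\psi ^{mnbc}=4\varphi _{a}^{\ bc}$ and $\varphi _{pab}\varphi _{qcd}\psi ^{abcd}=24g_{pq}$) into the lower-order pieces $-\frac{1}{4}\chi _{\ bc}^{(a}\varphi ^{m)bc}$, $\frac{1}{6}\ast \chi \,\varphi \,\chi $, $\frac{1}{12}\ast \chi ^{(a}\psi ^{m)bcd}$ and $\frac{1}{12}g^{am}\chi ^{bcd}\varphi _{bcd}$, while the pure $\psi \psi $ and pure $\ast \chi \ast \chi $ combinations yield the quartic terms of (\ref{gammaam}); as a check, the $\chi =0$ specialization reduces to $g^{am}$ through $-\frac{1}{96}(4\varphi _{a}^{\ bm})(4\varphi _{m}^{\ ca})=-\frac{1}{6}\varphi _{a}^{\ bm}\varphi _{m}^{\ ca}=g^{bc}$.

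The main obstacle is exactly this last bookkeeping: tracking all sixteen contractions with the correct signs and symmetrizations, and verifying that the mixed $\psi $--$\varphi $ monomials reorganize into precisely the stated lower-degree terms with coefficients $-\frac{1}{4},\frac{1}{6},\frac{1}{12},\frac{1}{12}$. Pinning down the correct natural-contraction identity for the inverse metric (so that no uncontrolled $\tilde{g}^{-1}$ or $s^{-1}$ reappears, and the determinant power comes out as $(\det g/\det \tilde{g})^{1}$ rather than a half-integer power times $s$-factors) is the one genuinely non-routine idea; after that, the computation, though long, is entirely mechanical, and the determinant formula follows by the trace argument of the first paragraph.
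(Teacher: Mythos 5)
Your proposal is correct and follows essentially the same route as the paper: both invoke the contracted identity $\tilde{\varphi}_{c}^{\ \ \tilde{a}\tilde{b}}\tilde{\varphi}_{b}^{\ \tilde{m}\tilde{c}}=-6\tilde{g}^{\tilde{a}\tilde{m}}$ applied to the deformed $G_{2}$-structure, substitute (\ref{phitild2raised}) so that the two half-powers of $\det g/\det\tilde{g}$ combine into a single factor, expand the resulting product via the contraction identities of Proposition \ref{propcontractions} to obtain $\gamma^{am}$, and then deduce (\ref{detgtildgen}) from the trace relation $\tilde{g}^{\tilde{a}\tilde{m}}\tilde{g}_{am}=7$ together with $\tilde{g}_{am}=\left(\det g/\det\tilde{g}\right)^{1/2}s_{am}$. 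The only differences are presentational (you state the trace argument first, and you add the useful $\chi=0$ consistency check), so there is nothing to correct.
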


\begin{proof}
We have the following $G_{2}$-structure contraction identity for $\tilde{%
\varphi}$: 
\begin{equation*}
\tilde{\varphi}_{abc}\tilde{\varphi}_{m}^{\ \ \ \tilde{b}\tilde{c}}=6\tilde{g%
}_{am}
\end{equation*}%
Hence, 
\begin{equation*}
\tilde{\varphi}_{c\ \ \ }^{\ \ \tilde{a}\tilde{b}}\tilde{\varphi}_{b\ }^{\ 
\tilde{m}\tilde{c}}=-6\tilde{g}^{\tilde{a}\tilde{m}}
\end{equation*}%
From (\ref{phitild2raised}), we thus have 
\begin{eqnarray*}
\tilde{g}^{\tilde{a}\tilde{m}} &=&-\frac{1}{6}\tilde{\varphi}_{c\ \ \ }^{\ \ 
\tilde{a}\tilde{b}}\tilde{\varphi}_{b\ }^{\ \tilde{m}\tilde{c}} \\
&=&-\frac{1}{96}\left( \frac{\det g}{\det \tilde{g}}\right) \left( 4\varphi
_{c}^{\ \ ab}+\varphi _{cmn}\ast \chi ^{mnab}+\chi _{cmn}\psi ^{mnab}+\chi
_{cmn}\ast \chi ^{mnab}\right) \times \\
&&\left( 4\varphi _{b}^{\ \ mc}+\varphi _{bpq}\ast \chi ^{pqmc}+\chi
_{bpq}\psi ^{pqmc}+\chi _{bpq}\ast \chi ^{pqmc}\right)
\end{eqnarray*}%
Expanding this, and simplifying using $G_{2}$ contraction identities, we get
(\ref{gtildinv}). Now note that 
\begin{equation*}
\tilde{g}^{\tilde{a}\tilde{m}}\tilde{g}_{am}=\left( \frac{\det g}{\det 
\tilde{g}}\right) ^{\frac{1}{2}}\tilde{g}^{\tilde{a}\tilde{m}}s_{am}=7
\end{equation*}%
with $s_{am}$ given by (\ref{sabgen}). Hence, 
\begin{equation*}
\left( \frac{\det \tilde{g}}{\det g}\right) ^{\frac{3}{2}}=\frac{1}{7}\gamma
^{am}s_{am}.
\end{equation*}
\end{proof}

The simplest deformation would be one where $\chi $ lies in $\Lambda
_{1}^{3} $, and is hence proportional to $\varphi $. So suppose 
\begin{equation}
\chi =\left( f^{3}-1\right) \varphi .  \label{chiconf}
\end{equation}%
This way, 
\begin{equation*}
\tilde{\varphi}=f^{3}\varphi .
\end{equation*}%
From (\ref{sabgen}), we get 
\begin{equation}
s_{ab}=f^{9}g_{ab}.  \label{sabconf}
\end{equation}%
Therefore, from (\ref{gtildedeform1}), 
\begin{equation*}
\tilde{g}_{ab}=\left( \frac{\det g}{\det \tilde{g}}\right) ^{\frac{1}{2}%
}f^{9}g_{ab}.
\end{equation*}%
Taking the determinant on both sides, we find that 
\begin{equation}
\det \tilde{g}=f^{14}\det g.  \label{detgconf}
\end{equation}%
So in fact, 
\begin{eqnarray}
\tilde{g}_{ab} &=&f^{2}g_{ab}  \label{gabconf} \\
\tilde{g}^{ab} &=&f^{-2}g^{ab}
\end{eqnarray}%
and so this defines a conformation transformation. We can also then show
that 
\begin{equation}
\tilde{\psi}=f^{4}\psi .  \label{psiconf}
\end{equation}

The next simplest case is when $\chi =v\lrcorner \psi \in \Lambda _{7}^{3}$.
Then $\ast \chi =-v^{\flat }\wedge \varphi $. From this we can obtain $%
s_{ab} $ and as it was shown by Karigiannis in \cite{karigiannis-2005-57}, 
\begin{equation}
s_{ab}=\left( 1+M\right) g_{ab}-v_{a}v_{b}  \label{sab7}
\end{equation}%
where $M=\left\vert v\right\vert ^{2}$ with the norm taken using $g_{ab}$.
Also, using the expressions for $\chi $ and $\ast \chi $ we can now get $%
\gamma ^{ab}$ from (\ref{gammaam}):%
\begin{equation}
\gamma ^{ab}=\left( 1+M\right) \left( g^{ab}+v^{a}v^{b}\right)
\label{gammamup7}
\end{equation}%
Substituting this into (\ref{detgtildgen}), we get: 
\begin{equation}
\left( \frac{\det \tilde{g}}{\det g}\right) ^{\frac{3}{2}}=\left( 1+M\right)
^{2}  \label{p7detg}
\end{equation}%
and therefore, 
\begin{subequations}%
\label{p7gabbasics} 
\begin{eqnarray}
\tilde{g}_{ab} &=&\left( \frac{\det g}{\det \tilde{g}}\right) ^{\frac{1}{2}%
}s_{ab}=\left( 1+M\right) ^{-\frac{2}{3}}\left( \left( 1+M\right)
g_{ab}-v_{a}v_{b}\right)  \label{p7gabdef} \\
\tilde{g}^{ab} &=&\left( \frac{\det g}{\det \tilde{g}}\right) \gamma
^{am}=\left( 1+M\right) ^{-\frac{1}{3}}\left( g^{mu}+v^{m}v^{u}\right)
\label{p7gabdefinv}
\end{eqnarray}
\end{subequations}%
As expected, these are precisely the results obtained in \cite%
{karigiannis-2005-57}. However the method used here does not depend on the
particular form of $s_{ab}$ and hence theoretically is applicable in the
case when $\chi \in \Lambda _{27}^{3}$ as well.

In fact, when $\chi \in \Lambda _{27}^{3}$, we can write it as 
\begin{equation}
\chi _{abc}=h_{[a}^{\ \ d}\varphi _{bc]d}  \label{chi27}
\end{equation}%
for some traceless, symmetric $h_{ab}$. Then it can be shown that 
\begin{equation}
\ast \chi _{abcd}=-\frac{4}{3}h_{\ [a}^{e}\psi _{\left\vert e\right\vert
bcd]}.  \label{chi427}
\end{equation}%
Now we can substitute both $\chi $ and $\ast \chi $ into the expression (\ref%
{sabgen}) for $s_{ab}$, and after some manipulations get%
\begin{eqnarray}
s_{ab} &=&g_{ab}+\frac{2}{3}h_{ab}+\frac{2}{9}h_{a}^{\ \ c}h_{cb}-\frac{1}{18%
}\func{Tr}\left( h^{2}\right) g_{ab}-\frac{1}{18}\varphi _{amn}\varphi
_{bpq}h^{mn}h^{nq}  \label{chi27sab} \\
&&-\frac{1}{27}\varphi _{amn}\varphi _{bpq}h^{mr}h_{\ \ r}^{p}h^{nq}+\frac{1%
}{81}\func{Tr}\left( h^{3}\right) g_{ab}  \notag
\end{eqnarray}%
This expression for $s_{ab}$ is already rather complicated, and so even
getting an explicit closed expression for $\left( \frac{\det \tilde{g}}{\det
g}\right) $ becomes a very tough task, which is beyond the scope of this
paper.

Now let us consider what happens to the Levi-Civita connection of the
deformed metric.

\begin{lemma}
\label{lemdelconn}Given a deformation of $\varphi $ as in (\ref{phideform1}%
), and the corresponding deformation of the metric (\ref{gtildedeform1}),
the components of the Levi-Civita connection $\tilde{\Gamma}_{a\ \ \ c}^{\ \
b}$ corresponding to the new metric $\tilde{g}$ are given by 
\begin{equation*}
\tilde{\Gamma}_{a\ \ \ c}^{\ \ b}=\Gamma _{a\ \ \ c}^{\ \ b}+\delta \Gamma
_{a\ \ \ c}^{\ \ b}
\end{equation*}%
where%
\begin{eqnarray}
\delta \Gamma _{a\ \ \ c}^{\ \ b} &=&\frac{1}{2}\left( \frac{\det g}{\det 
\tilde{g}}\right) ^{\frac{1}{2}}\left( \tilde{g}^{\tilde{b}\tilde{d}}\left(
\nabla _{c}s_{ad}+\nabla _{a}s_{cd}-\nabla _{d}s_{ac}\right) \right.
\label{deltagamma} \\
&&\left. -\frac{1}{9}\left( \delta _{a}^{b}\delta _{c}^{e}+\delta
_{c}^{b}\delta _{a}^{e}-\tilde{g}_{ac}\tilde{g}^{\tilde{b}\tilde{e}}\right) 
\tilde{g}^{\tilde{m}\tilde{n}}\nabla _{e}s_{mn}\right)  \notag
\end{eqnarray}%
for $s_{ab}$ given by (\ref{sabgen}).
\end{lemma}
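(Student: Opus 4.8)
The plan is to use the standard fact that the difference of the Levi-Civita connections of two metrics on the same manifold is a tensor, obtained from the Koszul formula by replacing ordinary derivatives with covariant derivatives of a background connection. Writing $\nabla$ for the Levi-Civita connection of $g$, I would first establish
\[
\tilde{\Gamma}_{a\ \ c}^{\ \ b}-\Gamma_{a\ \ c}^{\ \ b}=\tfrac12\,\tilde{g}^{\tilde{b}\tilde{d}}\left(\nabla_a\tilde{g}_{cd}+\nabla_c\tilde{g}_{ad}-\nabla_d\tilde{g}_{ac}\right).
\]
This follows by writing each $\partial\tilde{g}$ appearing in $\tilde{\Gamma}$ as $\nabla\tilde{g}$ plus Christoffel corrections of $g$; upon symmetrizing over the three terms the correction terms collapse, using the symmetry of $\Gamma$ and of $\tilde{g}$, to exactly $\Gamma_{a\ \ c}^{\ \ b}$, leaving the claimed tensorial remainder.

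Next I would substitute the deformed metric. Setting $\lambda=(\det g/\det\tilde{g})^{1/2}$ so that $\tilde{g}_{ab}=\lambda s_{ab}$ by (\ref{gtildedeform1}), I split
\[
\nabla_a\tilde{g}_{cd}=(\nabla_a\lambda)\,s_{cd}+\lambda\,\nabla_a s_{cd},
\]
and similarly for the other two terms. The $\lambda\nabla s$ pieces immediately produce the first group $\tfrac12\lambda\,\tilde{g}^{\tilde{b}\tilde{d}}(\nabla_c s_{ad}+\nabla_a s_{cd}-\nabla_d s_{ac})$ in the statement. The crux is then to evaluate $\nabla_a\lambda$. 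Taking determinants of $\tilde{g}_{ab}=\lambda s_{ab}$ gives $\lambda^9=\det g/\det s$, hence $\nabla_a\log\lambda=\tfrac19(\nabla_a\log\det g-\nabla_a\log\det s)$. Since $\nabla$ is metric-compatible with $g$ we have $\nabla_a\log\det g=g^{mn}\nabla_a g_{mn}=0$, while $\nabla_a\log\det s=s^{mn}\nabla_a s_{mn}=\lambda\,\tilde{g}^{\tilde{m}\tilde{n}}\nabla_a s_{mn}$ using $s^{mn}=\lambda\,\tilde{g}^{\tilde{m}\tilde{n}}$. Thus
\[
\nabla_a\lambda=-\tfrac19\lambda^2\,\tilde{g}^{\tilde{m}\tilde{n}}\nabla_a s_{mn}.
\]

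Finally I would feed this back and simplify the three $\nabla\lambda$-terms using the contractions $\tilde{g}^{\tilde{b}\tilde{d}}s_{cd}=\lambda^{-1}\delta_c^b$, $\tilde{g}^{\tilde{b}\tilde{d}}s_{ad}=\lambda^{-1}\delta_a^b$ and $s_{ac}=\lambda^{-1}\tilde{g}_{ac}$. After summation the three terms combine with a common factor $-\tfrac{1}{18}\lambda\,\tilde{g}^{\tilde{m}\tilde{n}}\nabla_e s_{mn}$, and collecting the index structures yields precisely $-\tfrac12\lambda\cdot\tfrac19(\delta_a^b\delta_c^e+\delta_c^b\delta_a^e-\tilde{g}_{ac}\tilde{g}^{\tilde{b}\tilde{e}})\,\tilde{g}^{\tilde{m}\tilde{n}}\nabla_e s_{mn}$, matching the second group. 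The only real obstacle is the bookkeeping: pinning down the coefficient of $\nabla_a\lambda$ from the determinant identity and distributing the three raised-index contractions so that the trace-type structure $(\delta_a^b\delta_c^e+\delta_c^b\delta_a^e-\tilde{g}_{ac}\tilde{g}^{\tilde{b}\tilde{e}})$ emerges cleanly; everything else is routine symmetrization.
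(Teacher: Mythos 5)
Your proposal is correct and takes essentially the same route as the paper: the same tensorial formula for the difference of the two Levi--Civita connections, the same splitting of $\nabla_a\tilde{g}_{cd}$ into a conformal-factor term plus $\lambda\nabla_a s_{cd}$, and the same determinant-derivative identity, which you obtain from Jacobi's formula while the paper derives it by expanding the determinant ratio explicitly with epsilon symbols (the identical computation in different notation). The one step to tidy up is that $\nabla_a\log\det g$ and $\nabla_a\log\det s$ are not separately meaningful scalars (each Jacobi expression is off by the Christoffel trace $2\Gamma_{a\ \ c}^{\ \ c}$), but these corrections cancel in the difference $\nabla_a\log\left(\det s/\det g\right)=s^{mn}\nabla_a s_{mn}$, so your key formula $\nabla_a\lambda=-\tfrac{1}{9}\lambda^{2}\,\tilde{g}^{\tilde{m}\tilde{n}}\nabla_a s_{mn}$ agrees exactly with the paper's intermediate result.
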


\begin{proof}
As it is well known, the components of the Levi-Civita connection are given
by 
\begin{equation}
\Gamma _{a\ \ \ c}^{\ \ b}=\frac{1}{2}g^{bd}\left(
g_{da,c}+g_{dc,a}-g_{ac,d}\right)  \label{LCconn1}
\end{equation}%
and hence for the modified metric, we have%
\begin{equation}
\tilde{\Gamma}_{a\ \ \ c}^{\ \ b}=\frac{1}{2}\tilde{g}^{\tilde{b}\tilde{d}%
}\left( \tilde{g}_{da,c}+\tilde{g}_{dc,a}-\tilde{g}_{ac,d}\right)
\label{LCconn2}
\end{equation}%
The difference between the two connections $\delta \Gamma _{a\ \ \ c}^{\ \
b} $ is then given by 
\begin{equation}
\delta \Gamma _{a\ \ \ c}^{\ \ b}=\tilde{\Gamma}_{a\ \ \ c}^{\ \ b}-\Gamma
_{a\ \ \ c}^{\ \ b}=\frac{1}{2}\tilde{g}^{\tilde{b}\tilde{d}}\left( \nabla
_{c}\tilde{g}_{ad}+\nabla _{a}\tilde{g}_{cd}-\nabla _{d}\tilde{g}_{ac}\right)
\label{LCconndiff}
\end{equation}%
So consider $\nabla _{c}\tilde{g}_{ad}$. Let 
\begin{equation*}
\gamma =\left( \frac{\det \tilde{g}}{\det g}\right) ^{\frac{3}{2}}
\end{equation*}%
then, from (\ref{gtildedeform1}), we have 
\begin{eqnarray*}
\nabla _{c}\tilde{g}_{ad} &=&\nabla _{c}\left( \gamma ^{-\frac{1}{3}%
}s_{ad}\right) =-\frac{1}{3}\gamma ^{-\frac{4}{3}}\left( \nabla _{c}\gamma
\right) s_{ad}+\gamma ^{-\frac{1}{3}}\nabla _{c}s_{ad} \\
&=&-\frac{1}{3}\gamma ^{-1}\left( \nabla _{c}\gamma \right) \tilde{g}%
_{ad}+\gamma ^{-\frac{1}{3}}\nabla _{c}s_{ad}
\end{eqnarray*}%
Now let us look at $\nabla _{c}\gamma $. Note that by definition of the
determinant, we get 
\begin{equation*}
\gamma ^{3}=\left( \frac{\det \tilde{g}}{\det g}\right) ^{\frac{9}{2}}=\frac{%
1}{7!}\frac{1}{\det g}\hat{\varepsilon}^{mnpqrst}\hat{\varepsilon}%
^{abcdefg}s_{am}s_{bn}s_{cp}s_{dq}s_{er}s_{fs}s_{gt}.
\end{equation*}%
and similarly, 
\begin{equation*}
\tilde{g}^{mu}=\frac{1}{6!}\left( \frac{\det g}{\det \tilde{g}}\right) ^{4}%
\frac{1}{\det g}\hat{\varepsilon}^{mnpqrst}\hat{\varepsilon}%
^{ubcdefg}s_{bn}s_{cp}s_{dq}s_{er}s_{fs}s_{gt}
\end{equation*}%
So, in particular, 
\begin{eqnarray*}
\nabla _{c}\left( \gamma ^{3}\right) &=&\nabla _{c}\left( \frac{1}{7!}\frac{1%
}{\det g}\hat{\varepsilon}\hat{\varepsilon}sssssss\right) \\
&=&\frac{1}{6!}\frac{1}{\det g}\hat{\varepsilon}\hat{\varepsilon}\left(
\nabla _{c}s\right) ssssss \\
&=&\gamma ^{\frac{8}{3}}\tilde{g}^{\tilde{m}\tilde{n}}\nabla _{c}s_{mn}
\end{eqnarray*}%
Therefore,%
\begin{equation}
\nabla _{c}\gamma =\frac{1}{3}\gamma ^{\frac{2}{3}}\tilde{g}^{\tilde{m}%
\tilde{n}}\nabla _{c}s_{mn}  \label{delgamma}
\end{equation}%
where for brevity we have omitted the contracted indices. Hence, 
\begin{equation}
\nabla _{c}\tilde{g}_{ad}=-\frac{1}{9}\gamma ^{-\frac{1}{3}}\tilde{g}_{ad}%
\tilde{g}^{\tilde{m}\tilde{n}}\nabla _{c}s_{mn}+\gamma ^{-\frac{1}{3}}\nabla
_{c}s_{ad}.  \label{delcgadtild}
\end{equation}%
Substituting (\ref{delcgadtild}) into (\ref{LCconndiff}), we find that%
\begin{equation*}
\delta \Gamma _{a\ \ \ c}^{\ \ b}=\frac{1}{2}\gamma ^{-\frac{1}{3}}\left( 
\tilde{g}^{\tilde{b}\tilde{d}}\left( \nabla _{c}s_{ad}+\nabla
_{a}s_{cd}-\nabla _{d}s_{ac}\right) -\frac{1}{9}\left( \delta _{a}^{b}\delta
_{c}^{e}+\delta _{c}^{b}\delta _{a}^{e}-\tilde{g}_{ac}\tilde{g}^{\tilde{b}%
\tilde{e}}\right) \tilde{g}^{\tilde{m}\tilde{n}}\nabla _{e}s_{mn}\right)
\end{equation*}%
and then after substituting $\gamma ^{-\frac{1}{3}}=\left( \frac{\det g}{%
\det \tilde{g}}\right) ^{\frac{1}{2}}$we get the result.
\end{proof}

For the conformal deformation with $\chi $ given by (\ref{chiconf}), we find
that 
\begin{equation}
\delta \Gamma _{a\ \ \ c}^{\ \ b}=\frac{1}{9}f^{-9}\partial _{e}f\left(
\delta _{c}^{e}\delta _{a}^{b}+\delta _{a}^{e}\delta
_{c}^{b}-g^{be}g_{ac}\right)  \label{confdelgam}
\end{equation}

\section{Torsion deformations}

\label{sectorsdeform}Suppose we have a deformation of $\varphi $ given by (%
\ref{phideform1}). Using the results from Sect. \ref{secdeform}, we can
calculate the deformed torsion.

\begin{lemma}
Given a deformation of $\varphi $ as in (\ref{phideform1}), the full torsion 
$\tilde{T}$ of the new $G_{2}$-structure $\tilde{\varphi}$ is given by 
\begin{eqnarray}
\tilde{T}_{a}^{\ \Tilde{m}} &=&\left( \frac{\det g}{\det \tilde{g}}\right) ^{%
\frac{1}{2}}\left( T_{a}^{\ m}+\frac{1}{24}T_{a}^{\ e}\psi _{ebcd}\ast \chi
^{mbcd}+\frac{1}{24}\psi ^{mbcd}\nabla _{a}\chi _{bcd}\right.
\label{tamtildefull} \\
&&\left. +\frac{1}{24}\nabla _{a}\chi _{bcd}\ast \chi ^{mbcd}\right) -\frac{1%
}{2}\delta \Gamma _{a\ \ \ b}^{\ \ e}\tilde{\varphi}_{e}^{\ \ \tilde{m}%
\tilde{b}}  \notag
\end{eqnarray}%
with $\delta \Gamma _{a\ \ \ b}^{\ \ e}$ given by (\ref{deltagamma}).
\end{lemma}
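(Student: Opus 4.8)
The plan is to mimic the definition (\ref{tamphipsi}) of the torsion tensor for the deformed structure, writing $\tilde{T}_{a}^{\ \tilde{m}}=\frac{1}{24}(\tilde{\nabla}_{a}\tilde{\varphi}_{bcd})\tilde{\psi}^{\tilde{m}\tilde{b}\tilde{c}\tilde{d}}$, where every raised index and every $bcd$-contraction now uses $\tilde{g}$ and $\tilde{\nabla}$ is the Levi-Civita connection of $\tilde{g}$. The first move is to replace $\tilde{\nabla}$ by $\nabla$ via Lemma~\ref{lemdelconn}: since the two connections differ by the tensor $\delta\Gamma$, one has $\tilde{\nabla}_{a}\tilde{\varphi}_{bcd}=\nabla_{a}\tilde{\varphi}_{bcd}-\delta\Gamma_{a\ \ b}^{\ \ e}\tilde{\varphi}_{ecd}-\delta\Gamma_{a\ \ c}^{\ \ e}\tilde{\varphi}_{bed}-\delta\Gamma_{a\ \ d}^{\ \ e}\tilde{\varphi}_{bce}$. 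Then I would substitute $\tilde{\varphi}=\varphi+\chi$ so that $\nabla_{a}\tilde{\varphi}_{bcd}=\nabla_{a}\varphi_{bcd}+\nabla_{a}\chi_{bcd}$, and crucially replace the fully $\tilde{g}$-raised $\tilde{\psi}^{\tilde{m}\tilde{b}\tilde{c}\tilde{d}}$ by its closed form from (\ref{psiraised}), namely $(\det g/\det\tilde{g})^{1/2}(\psi^{mbcd}+\ast\chi^{mbcd})$ with indices now raised by $g$. This single substitution is what collapses all the hidden $\tilde{g}$-dependence of the contraction into one overall determinant factor.

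For the three connection terms, I would exploit the total antisymmetry of $\tilde{\psi}$ in $bcd$: relabelling dummy indices shows the three contributions coincide, so they combine into $-3\,\delta\Gamma_{a\ \ b}^{\ \ e}\tilde{\varphi}_{ecd}\tilde{\psi}^{\tilde{m}\tilde{b}\tilde{c}\tilde{d}}$. Reordering the indices of $\tilde{\psi}$ and applying the deformed contraction identity (\ref{phitild2raised}), i.e. $\tilde{\varphi}_{ecd}\tilde{\psi}^{\tilde{c}\tilde{d}\tilde{m}\tilde{b}}=4\tilde{\varphi}_{e}^{\ \tilde{m}\tilde{b}}$ (valid because $\tilde{\varphi},\tilde{\psi}$ obey the same identities as $\varphi,\psi$), turns this block into $-\tfrac{1}{2}\delta\Gamma_{a\ \ b}^{\ \ e}\tilde{\varphi}_{e}^{\ \tilde{m}\tilde{b}}$ once the prefactor $\tfrac{1}{24}$ is included. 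This is exactly the last term of (\ref{tamtildefull}), and notably it carries no determinant factor, since both free indices of $\tilde{\varphi}_{e}^{\ \tilde{m}\tilde{b}}$ are already raised by $\tilde{g}$.

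For the remaining four terms I would expand $(\nabla_{a}\varphi_{bcd}+\nabla_{a}\chi_{bcd})(\psi^{mbcd}+\ast\chi^{mbcd})$. The piece $\tfrac{1}{24}(\nabla_{a}\varphi_{bcd})\psi^{mbcd}$ is by definition $T_{a}^{\ m}$; using (\ref{fulltorsion}) to write $\nabla_{a}\varphi_{bcd}=T_{a}^{\ e}\psi_{ebcd}$ turns $\tfrac{1}{24}(\nabla_{a}\varphi_{bcd})\ast\chi^{mbcd}$ into $\tfrac{1}{24}T_{a}^{\ e}\psi_{ebcd}\ast\chi^{mbcd}$; and the two $\chi$-derivative pieces are already in final form. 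Collecting these under the common factor $(\det g/\det\tilde{g})^{1/2}$ reproduces the bracketed expression in (\ref{tamtildefull}).

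The routine but error-prone heart of the argument is the index bookkeeping: one must track carefully which indices are raised by $g$ versus $\tilde{g}$, since (\ref{psiraised}) trades $\tilde{g}$-raised indices on $\tilde{\psi}$ for $g$-raised ones at the cost of a determinant factor, whereas the identity (\ref{phitild2raised}) is stated with $\tilde{g}$-raised indices throughout. The main obstacle is simply verifying that these two conversions are mutually consistent, so that the connection term ends up with no determinant factor while the other four terms share a single factor of $(\det g/\det\tilde{g})^{1/2}$; once (\ref{psiraised}) and (\ref{phitild2raised}) are in hand, no genuinely new $G_{2}$ computation is required.
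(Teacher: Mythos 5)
Your proposal is correct and takes essentially the same route as the paper's own proof: both start from the definition (\ref{tamphipsi}) applied to $(\tilde{\varphi},\tilde{\psi})$, trade $\tilde{\nabla}$ for $\nabla$ plus the $\delta\Gamma$ corrections of Lemma \ref{lemdelconn}, collapse the three connection terms via antisymmetry and the deformed contraction identity (\ref{phitild2raised}) to get $-\tfrac{1}{2}\delta\Gamma_{a\ \ b}^{\ \ e}\tilde{\varphi}_{e}^{\ \ \tilde{m}\tilde{b}}$, and expand the remaining piece using (\ref{psiraised}) together with (\ref{fulltorsion}). The index bookkeeping you flag (which terms carry the determinant factor) is handled identically in the paper, so there is no substantive difference.
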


\begin{proof}
Starting from (\ref{tamphipsi}) for $\tilde{\varphi}$ and $\tilde{\psi}$, we
get 
\begin{eqnarray*}
\tilde{T}_{a}^{\ \Tilde{m}} &=&\frac{1}{24}\left( \tilde{\nabla}_{a}\tilde{%
\varphi}_{bcd}\right) \tilde{\psi}^{\tilde{m}\tilde{b}\tilde{c}\tilde{d}} \\
&=&\frac{1}{24}\left( \nabla _{a}\tilde{\varphi}_{bcd}-3\delta \Gamma _{a\ \
\ b}^{\ \ e}\tilde{\varphi}_{cde}\right) \tilde{\psi}^{\tilde{m}\tilde{b}%
\tilde{c}\tilde{d}} \\
&=&\frac{1}{24}\left( \left( \frac{\det g}{\det \tilde{g}}\right) ^{\frac{1}{%
2}}\nabla _{a}\tilde{\varphi}_{bcd}\ast \tilde{\varphi}^{mbcd}-3\delta
\Gamma _{a\ \ \ b}^{\ \ e}\tilde{\varphi}_{cde}\tilde{\psi}^{\tilde{m}\tilde{%
b}\tilde{c}\tilde{d}}\right)
\end{eqnarray*}%
We can write 
\begin{equation*}
\tilde{\varphi}_{cde}\tilde{\psi}^{\tilde{m}\tilde{b}\tilde{c}\tilde{d}}=4%
\tilde{\varphi}_{e}^{\ \ \tilde{m}\tilde{b}}
\end{equation*}%
and we can expand%
\begin{eqnarray*}
\nabla _{a}\tilde{\varphi}_{bcd}\ast \tilde{\varphi}^{mbcd} &=&\left( \nabla
_{a}\varphi _{bcd}+\nabla _{a}\chi _{bcd}\right) \left( \psi ^{mbcd}+\ast
\chi ^{mbcd}\right) \\
&=&24T_{a}^{\ m}+T_{a}^{\ e}\psi _{ebcd}\ast \chi ^{mbcd} \\
&&+\psi ^{mbcd}\nabla _{a}\chi _{bcd}+\nabla _{a}\chi _{bcd}\ast \chi ^{mbcd}
\end{eqnarray*}%
Hence the result.
\end{proof}

The torsion classes $W_{i}$ were originally defined by the $G_{2}$-structure 
$\varphi $, so once we have deformed $\varphi $ to $\tilde{\varphi}$ we will
also get new torsion classes. Denote the new space by $\tilde{W}$ which
splits as 
\begin{equation}
\tilde{W}=\tilde{W}_{1}\oplus \tilde{W}_{7}\oplus \tilde{W}_{14}\oplus 
\tilde{W}_{27}.
\end{equation}%
The new torsion $\tilde{T}$ should now split as 
\begin{equation}
\tilde{T}_{ab}=\tilde{\tau}_{1}\tilde{g}_{ab}+\left( \tilde{\tau}_{7}^{%
\tilde{\#}}\lrcorner \tilde{\varphi}\right) _{ab}+\left( \tilde{\tau}%
_{14}\right) _{ab}+\left( \tilde{\tau}_{27}\right) _{ab}
\end{equation}%
Note that $\tilde{\tau}_{7}^{\tilde{\#}}$ refers to the vector obtained from
the $1$-form $\tilde{\tau}_{7}$ by raising indices using the deformed
inverse metric $\tilde{g}^{-1}$. In general, determining these new torsion
components $\tilde{\tau}_{i}$ is quite complicated. First we would have to
lower one of the indices in (\ref{tamtildefull}) using $\tilde{g}$ and
extract the different components. It is however easy to extract the the $%
\tilde{W}_{1}$-component directly from (\ref{tamtildefull}) by just
contracting the indices.

\begin{lemma}
The $\tilde{W}_{1}$-component $\tilde{\tau}_{1}$ of the deformed torsion $%
\tilde{T}$ is given by 
\begin{equation*}
\tilde{\tau}_{1}=\left( \frac{\det g}{\det \tilde{g}}\right) ^{\frac{1}{2}%
}\left( \tau _{1}+\frac{1}{168}T_{a}^{\ e}\psi _{ebcd}\ast \chi ^{abcd}+%
\frac{1}{168}\psi ^{abcd}\nabla _{a}\chi _{bcd}+\frac{1}{168}\nabla _{a}\chi
_{bcd}\ast \chi ^{abcd}\right)
\end{equation*}
\end{lemma}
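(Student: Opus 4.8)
The plan is to realize $\tilde{\tau}_{1}$ as a trace and then read it off directly from the formula for $\tilde{T}_{a}^{\ \tilde{m}}$ just proved. In the decomposition $\tilde{T}_{ab}=\tilde{\tau}_{1}\tilde{g}_{ab}+(\tilde{\tau}_{7}^{\tilde{\#}}\lrcorner\tilde{\varphi})_{ab}+(\tilde{\tau}_{14})_{ab}+(\tilde{\tau}_{27})_{ab}$, the last three pieces are all $\tilde{g}$-traceless: the $\Lambda_{7}^{2}$ and $\Lambda_{14}^{2}$ parts are antisymmetric, and $\tilde{\tau}_{27}$ is traceless symmetric. Hence tracing with $\tilde{g}$ annihilates them and leaves $\tilde{\tau}_{1}=\frac{1}{7}\tilde{T}_{a}^{\ \tilde{a}}$. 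So I would simply set $m=a$ and sum in the expression for $\tilde{T}_{a}^{\ \tilde{m}}$ from the preceding Lemma.

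Carrying out the trace term by term, the overall factor $(\det g/\det\tilde{g})^{1/2}$ is untouched. The leading term contributes $T_{a}^{\ a}=\func{Tr}T=7\tau_{1}$, since by (\ref{torsioncomps}) only the $\tau_{1}g$ part of $T$ has nonzero trace; after the prefactor $\frac{1}{7}$ this returns $\tau_{1}$. Each of the three correction terms $\frac{1}{24}T_{a}^{\ e}\psi_{ebcd}\ast\chi^{mbcd}$, $\frac{1}{24}\psi^{mbcd}\nabla_{a}\chi_{bcd}$ and $\frac{1}{24}\nabla_{a}\chi_{bcd}\ast\chi^{mbcd}$ has its free index identified with $a$ and summed, and the factor $\frac{1}{7}$ turns each $\frac{1}{24}$ into $\frac{1}{168}$, matching the statement.

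The only step with real content is the vanishing of the connection contribution, which after contraction reads $-\frac{1}{2}\delta\Gamma_{a\ \ b}^{\ e}\tilde{\varphi}_{e}^{\ \tilde{a}\tilde{b}}$. Here $\tilde{\varphi}_{e}^{\ \tilde{a}\tilde{b}}$ is antisymmetric under $a\leftrightarrow b$, being (raised) components of the $3$-form $\tilde{\varphi}$. On the other hand $\delta\Gamma_{a\ \ b}^{\ e}$ is symmetric under $a\leftrightarrow b$: it is the difference of two torsion-free Levi-Civita connections, and this symmetry is manifest in (\ref{deltagamma}), where both the factor $\nabla_{c}s_{ad}+\nabla_{a}s_{cd}-\nabla_{d}s_{ac}$ and the bracketed $\delta$-tensor are invariant under interchange of the two subscript indices. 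Contracting a symmetric tensor against an antisymmetric one over that index pair gives zero, so this term drops and the claimed expression survives.

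I expect this symmetric-versus-antisymmetric cancellation to be the crux; everything else is bookkeeping. The one subtlety worth keeping honest is that the preceding Lemma is an identity of $(1,1)$-tensors, so its trace is the ordinary (metric-independent) matrix trace; this is what legitimizes contracting the $m$-slot with the subscript $a$, even though $m$ is raised with $g$ in the bracketed group and with $\tilde{g}$ in the connection term.
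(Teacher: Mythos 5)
Your proposal is correct and takes essentially the same route as the paper: contract the free indices in the formula for $\tilde{T}_{a}^{\ \tilde{m}}$, observe that the connection term dies because $\delta \Gamma _{a\ \ \ b}^{\ \ e}$ is symmetric in $a,b$ while $\tilde{\varphi}_{e}^{\ \ \tilde{a}\tilde{b}}$ is antisymmetric, and identify $\tilde{T}_{a}^{\ \tilde{a}}=7\tilde{\tau}_{1}$ to turn each $\tfrac{1}{24}$ into $\tfrac{1}{168}$. Your extra remarks (the manifest symmetry of $\delta\Gamma$ in (\ref{deltagamma}) and the trace being the matrix trace of a $(1,1)$-tensor) only make explicit what the paper leaves implicit.
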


\begin{proof}
Contracting the indices in (\ref{tamtildefull}) we get 
\begin{equation*}
\tilde{T}_{a}^{\ \tilde{a}}=\left( \frac{\det g}{\det \tilde{g}}\right) ^{%
\frac{1}{2}}\left( T_{a}^{\ a}+\frac{1}{24}T_{a}^{\ e}\psi _{ebcd}\ast \chi
^{abcd}+\frac{1}{24}\psi ^{abcd}\nabla _{a}\chi _{bcd}+\frac{1}{24}\nabla
_{a}\chi _{bcd}\ast \chi ^{abcd}\right) -\frac{1}{2}\delta \Gamma _{a\ \ \
b}^{\ \ e}\tilde{\varphi}_{e}^{\ \ \tilde{a}\tilde{b}}
\end{equation*}%
Note that since the Christoffel symbols are symmetric in the bottom two
indices, $\delta \Gamma _{a\ \ \ b}^{\ \ e}$ is also symmetric in $a$ and $b$%
, so 
\begin{equation*}
\delta \Gamma _{a\ \ \ b}^{\ \ e}\tilde{\varphi}_{e}^{\ \ \tilde{a}\tilde{b}%
}=0
\end{equation*}%
Since 
\begin{equation*}
\tilde{T}_{a}^{\ \tilde{a}}=7\tilde{\tau}_{1},
\end{equation*}%
we get the result.
\end{proof}

Consider now what happens to $\tilde{T}_{an}$ with lowered indices.%
\begin{eqnarray}
\tilde{T}_{an} &=&\tilde{T}_{a}^{\ \Tilde{m}}\tilde{g}_{mn}=\left( \frac{%
\det g}{\det \tilde{g}}\right) \left( T_{a}^{\ m}+\frac{1}{24}T_{a}^{\
e}\psi _{ebcd}\ast \chi ^{mbcd}+\frac{1}{24}\psi ^{mbcd}\nabla _{a}\chi
_{bcd}\right.  \label{Tanlow} \\
&&\left. +\frac{1}{24}\nabla _{a}\chi _{bcd}\ast \chi ^{mbcd}\right) s_{mn}-%
\frac{1}{2}\delta \Gamma _{a\ \ \ b}^{\ \ e}\tilde{\varphi}_{en}^{\ \ \ 
\tilde{b}}  \notag
\end{eqnarray}%
Now using the expression for $\delta \Gamma _{a\ \ \ b}^{\ \ e}$ (\ref%
{deltagamma}), we get 
\begin{eqnarray*}
\delta \Gamma _{a\ \ \ b}^{\ \ e}\tilde{\varphi}_{en}^{\ \ \ \tilde{b}} &=&%
\frac{1}{2}\left( \frac{\det g}{\det \tilde{g}}\right) ^{\frac{1}{2}}\left( 
\tilde{g}^{\tilde{e}\tilde{d}}\tilde{\varphi}_{en}^{\ \ \ \tilde{b}}\left(
\nabla _{b}s_{ad}+\nabla _{a}s_{bd}-\nabla _{d}s_{ab}\right) \right. \\
&&\left. -\frac{1}{9}\left( \delta _{a}^{e}\delta _{b}^{f}+\delta
_{b}^{e}\delta _{a}^{f}-\tilde{g}_{ab}\tilde{g}^{\tilde{e}\tilde{f}}\right) 
\tilde{\varphi}_{en}^{\ \ \ \tilde{b}}\tilde{g}^{\tilde{p}\tilde{q}}\nabla
_{f}s_{pq}\right)
\end{eqnarray*}%
Simplifying further, we eventually get%
\begin{eqnarray*}
\delta \Gamma _{a\ \ \ b}^{\ \ e}\tilde{\varphi}_{en}^{\ \ \ \tilde{b}}
&=&\left( \frac{\det g}{\det \tilde{g}}\right) ^{\frac{1}{2}}\left( \tilde{%
\varphi}_{n}^{\ \ \tilde{b}\tilde{d}}\nabla _{b}s_{ad}-\frac{1}{9}\tilde{%
\varphi}_{an}^{\ \ \ \ \tilde{f}}\tilde{g}^{\tilde{p}\tilde{q}}\nabla
_{f}s_{pq}\right) \\
&=&\left( \frac{\det g}{\det \tilde{g}}\right) ^{\frac{1}{2}}\tilde{\varphi}%
_{c}^{\ \ \tilde{b}\tilde{d}}\left( \delta _{n}^{c}\nabla _{b}s_{ad}-\frac{1%
}{9}\delta _{a}^{c}\tilde{g}_{bn}\tilde{g}^{\tilde{p}\tilde{q}}\nabla
_{d}s_{pq}\right) \\
&=&\frac{1}{4}\left( \frac{\det g}{\det \tilde{g}}\right) \left( 4\varphi
_{c}^{\ \ bd}+\varphi _{cmn}\ast \chi ^{mnbd}+\chi _{cmn}\psi ^{mnbd}+\chi
_{cmn}\ast \chi ^{mnbd}\right) \times \\
&&\times \left( \delta _{n}^{c}\nabla _{b}s_{ad}-\frac{1}{9}\delta _{a}^{c}%
\tilde{g}_{bn}\tilde{g}^{\tilde{p}\tilde{q}}\nabla _{d}s_{pq}\right)
\end{eqnarray*}%
Thus, overall we have 
\begin{eqnarray}
\tilde{T}_{an} &=&\frac{1}{24}\left( \frac{\det g}{\det \tilde{g}}\right)
\left( 24T_{a}^{\ m}+T_{a}^{\ e}\psi _{ebcd}\ast \chi ^{mbcd}+\psi
^{mbcd}\nabla _{a}\chi _{bcd}+\nabla _{a}\chi _{bcd}\ast \chi ^{mbcd}\right)
\times  \label{Tanlow2} \\
&&\times s_{mn}-\frac{1}{8}\left( \frac{\det g}{\det \tilde{g}}\right)
\left( 4\varphi _{c}^{\ \ bd}+\varphi _{cpq}\ast \chi ^{pqbd}+\chi
_{cpq}\psi ^{pqbd}+\chi _{cpq}\ast \chi ^{pqbd}\right) \times  \notag \\
&&\times \left( \delta _{n}^{c}\nabla _{b}s_{ad}-\frac{1}{9}\delta _{a}^{c}%
\tilde{g}_{bn}\tilde{g}^{\tilde{p}\tilde{q}}\nabla _{d}s_{pq}\right) . 
\notag
\end{eqnarray}%
In the particular case of conformal deformations we can simply plug in $\chi 
$, $s$ and $\tilde{g}$ as in (\ref{chiconf}), (\ref{sabconf}) and (\ref%
{gabconf}) into (\ref{Tanlow}) and obtain the deformed torsion.

\begin{proposition}
\label{conftorsion}Let $\left( \varphi ,g\right) $ be a $G_{2}$-structure
with torsion $T$. Then define $\left( \tilde{\varphi},\tilde{g}\right) $ to
be a new $G_{2}$-structure given by a conformal transformation of $\left(
\varphi ,g\right) $: 
\begin{eqnarray*}
\tilde{\varphi} &=&f^{3}\varphi \\
\tilde{g} &=&f^{2}g.
\end{eqnarray*}%
Then the full torsion tensor $\tilde{T}$ is given by 
\begin{equation}
\tilde{T}=fT-df\lrcorner \varphi  \label{conftranstors}
\end{equation}
\end{proposition}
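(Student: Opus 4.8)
The plan is to substitute the conformal data directly into the general deformed-torsion formula (\ref{tamtildefull}) and to collapse every contraction using the $G_{2}$ identities of Proposition \ref{propcontractions}. For $\chi=\left(f^{3}-1\right)\varphi$ one has $\ast\chi=\left(f^{3}-1\right)\psi$, together with $\tilde{g}_{ab}=f^{2}g_{ab}$, $\det\tilde{g}=f^{14}\det g$ (so that $\left(\det g/\det\tilde{g}\right)^{1/2}=f^{-7}$), $\tilde{\psi}=f^{4}\psi$ and $s_{ab}=f^{9}g_{ab}$, from (\ref{chiconf}), (\ref{sabconf}), (\ref{gabconf}), (\ref{detgconf}) and (\ref{psiconf}). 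I would first record the connection difference by putting $s_{ab}=f^{9}g_{ab}$ into (\ref{deltagamma}) (equivalently using (\ref{confdelgam})), which reduces to the familiar conformal expression $\delta\Gamma_{a\ \ c}^{\ \ b}=f^{-1}\left(\delta_{a}^{b}\partial_{c}f+\delta_{c}^{b}\partial_{a}f-g_{ac}\partial^{b}f\right)$. Everything then becomes algebra inside (\ref{tamtildefull}).

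For the torsion part I would expand $\nabla_{a}\chi_{bcd}=3f^{2}\left(\partial_{a}f\right)\varphi_{bcd}+\left(f^{3}-1\right)\nabla_{a}\varphi_{bcd}$ and use the defining relation $\nabla_{a}\varphi_{bcd}=T_{a}^{\ e}\psi_{ebcd}$ from (\ref{fulltorsion}). The only contraction facts needed are $\psi_{ebcd}\psi^{mbcd}=24\delta_{e}^{m}$ and the vanishing $\varphi_{bcd}\psi^{mbcd}=0$ of the invariant one-form, both consequences of Proposition \ref{propcontractions}. The first collapses each of the four $T$-proportional terms of (\ref{tamtildefull}) to a multiple of $T_{a}^{\ m}$; writing $u=f^{3}-1$ their coefficients are $1,u,u,u^{2}$, whose sum is the perfect square $\left(1+u\right)^{2}=f^{6}$. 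Against the overall prefactor $f^{-7}$ this part of $\tilde{T}_{a}^{\ \tilde{m}}$ is therefore exactly $f^{-1}T_{a}^{\ m}$, and lowering the free index with $\tilde{g}_{mn}=f^{2}g_{mn}$ turns it into $fT_{an}$, the first term of (\ref{conftranstors}).

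For the gradient part, the $3f^{2}\left(\partial_{a}f\right)\varphi_{bcd}$ pieces all die against $\psi^{mbcd}$ by the same vanishing $\varphi_{bcd}\psi^{mbcd}=0$, so the only surviving $\partial f$ contribution is the connection term $-\tfrac{1}{2}\delta\Gamma_{a\ \ b}^{\ \ e}\tilde{\varphi}_{e}^{\ \ \tilde{m}\tilde{b}}$. Here $\tilde{\varphi}_{e}^{\ \ \tilde{m}\tilde{b}}=f^{-1}\varphi_{e}^{\ \ mb}$; inserting the conformal $\delta\Gamma$, the $\delta_{b}^{e}\partial_{a}f$ piece vanishes by tracelessness $\varphi_{b}^{\ \ mb}=0$, and the remaining two pieces, after lowering with $\tilde{g}$ and reordering $\varphi$ through its antisymmetry, combine into $-\partial^{c}f\,\varphi_{can}=-\left(df\lrcorner\varphi\right)_{an}$. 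Adding the two parts gives $\tilde{T}_{an}=fT_{an}-\left(df\lrcorner\varphi\right)_{an}$, which is (\ref{conftranstors}).

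The delicate point is not any single identity but the bookkeeping: consistently distinguishing indices raised with $g$ from those raised with $\tilde{g}$, tracking the powers of $f$ that enter through the determinant ratios and through $\tilde{\psi}=f^{4}\psi$, and fixing the relative signs when the several $\partial f$ terms are reordered into a single $df\lrcorner\varphi$. A convenient internal check is that the four $T$-coefficients must assemble into the perfect square $\left(1+u\right)^{2}$; a failure there would flag a misplaced index or a missing factor of $f$.
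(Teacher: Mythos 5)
Your proposal is correct and follows essentially the same route as the paper: the paper's (implicit) proof is precisely to substitute the conformal data (\ref{chiconf}), (\ref{sabconf}), (\ref{gabconf}) into the general deformed-torsion formula, which is what you do with (\ref{tamtildefull}) before lowering the index with $\tilde{g}$. Your detailed bookkeeping — the coefficients $1,u,u,u^{2}$ summing to $f^{6}$ against the $f^{-7}$ prefactor, the vanishing of the $\varphi_{bcd}\psi^{mbcd}$ contractions, and the collapse of the connection term to $-\left(df\lrcorner\varphi\right)_{an}$ — is an accurate execution of exactly that computation.
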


Thus from Proposition \ref{conftorsion} we see that a conformal
transformation only affects the $W_{7}$ torsion class, while the torsion
classes in $W_{1}$, $W_{14}$ and $W_{27}$ are simply scaled. Therefore, the
only conformally invariant torsion classes are the ones that contain a $%
W_{7} $ component. This was previously shown in \cite{FernandezGray} and (%
\cite{karigiannis-2005-57}) but here we have an explicit expression for the
torsion from which this conclusion follows trivially.

The expression (\ref{conftranstors}) also shows that if the $W_{7}$
component of the original torsion is an exact form, then it is possible to
remove this component by applying a particular conformal transformation.
Note that this implies that the class $W_{1}\oplus W_{7}$ is conformal to
the class $W_{1}$. As we know from (!!!), if $\tau _{1}\neq 0$, then 
\begin{equation*}
{\small \tau }_{7}{\small =}d\left( \log \tau _{1}\right)
\end{equation*}%
Hence in order to remove this torsion component, need 
\begin{equation*}
d\left( \log \tau _{1}\right) =\frac{1}{f}df
\end{equation*}%
hence, 
\begin{equation*}
f=\tau _{0}\tau _{1}
\end{equation*}%
is a solution for a constant $\tau _{0}$. The original torsion is 
\begin{equation*}
T=\tau _{1}g+\frac{1}{f}df\lrcorner \varphi
\end{equation*}%
so, under the change (\ref{conftranstors}), the new torsion will become 
\begin{equation*}
\tilde{T}=\tau _{0}\tau _{1}^{2}g
\end{equation*}%
However, under the transformation 
\begin{equation}
\varphi \longrightarrow \tau _{1}^{3}\varphi ,  \label{tau1conftrans}
\end{equation}%
the metric changes as 
\begin{equation*}
g\longrightarrow \tau _{1}^{2}g
\end{equation*}%
Hence in terms of the new metric, the new torsion is 
\begin{equation*}
\tilde{T}=\tau _{0}\tilde{g}
\end{equation*}%
and so the constant $\tau _{0}$ is the new $W_{1}$ torsion component. Thus
the conformal transformation (\ref{tau1conftrans}) reduces the class $%
W_{1}\oplus W_{7}$ to $W_{1}$. Conversely, a conformal transformation of the 
$W_{1}$ class will result in $W_{1}\oplus W_{7}$. Since $G_{2}$-structures
in the $W_{1}$ class are sometimes called \emph{nearly }$G_{2}$ or \emph{%
nearly parallel}, the $G_{2}$-structures in the strict $W_{1}\oplus W_{7}$
class are referred to as \emph{conformally nearly parallel}. If $W_{1}=0$,
then we just have the $W_{7}$ class. In this case, we just know that $\tau
_{7}$ is closed. So by Poincar\'{e} Lemma, we can at least locally find a
function $h$ such that $dh=\tau _{7}\,.$ By taking a conformal
transformation with $f=e^{h}$, we can thus locally fully remove the torsion.
Hence the $W_{7}$ class is sometimes called \emph{locally conformally
parallel. }

For the class $W_{1}\oplus W_{7}$ we can also explicitly write out the Ricci
curvature.

\begin{corollary}
\label{corrrict1t7}Suppose the $3$-form $\varphi $ defines a $G_{2}$%
-structure with torsion contained in the class $W_{1}\oplus W_{7}$. The the
Ricci curvature of the corresponding metric is given by%
\begin{equation}
R_{ab}=\left( \nabla ^{c}\left( \tau _{7}\right) _{c}+5\left( \tau
_{7}\right) ^{c}\left( \tau _{7}\right) _{c}+6\tau _{1}^{2}\right)
g_{ab}-5\left( \tau _{7}\right) _{a}\left( \tau _{7}\right) _{b}+5\nabla
_{a}\left( \tau _{7}\right) _{b}  \label{riccit1t7}
\end{equation}
\end{corollary}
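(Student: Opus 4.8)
The plan is to specialize the general Ricci formula (\ref{torsricci}) to a torsion tensor with only its $\mathbf{1}$ and $\mathbf{7}$ parts switched on. Setting $\tau _{14}=\tau _{27}=0$ in (\ref{torsioncomps}) gives $T_{ab}=\tau _{1}g_{ab}+(\tau _{7}\lrcorner \varphi )_{ab}$, so that the symmetric part of $T$ is $\tau _{1}g_{ab}$, the antisymmetric part is $(\tau _{7})^{c}\varphi _{cab}$, and $\func{Tr}T=7\tau _{1}$. First I would substitute this $T$ into each of the four terms of (\ref{torsricci}) and reduce every resulting $\varphi $--$\varphi $ and $\varphi $--$\psi $ contraction using Proposition \ref{propcontractions}. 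The square $T_{an}T^{n}{}_{b}$ and the term $T_{ac}T_{nm}\psi ^{nmc}{}_{b}$ are purely algebraic and collapse, via (\ref{phiphi1}) and the two-index contraction of $\varphi $ with $\psi $ obtained from (\ref{phipsi}), into combinations of $g_{ab}$, $(\tau _{7})_{a}(\tau _{7})_{b}$, $(\tau _{7})^{c}(\tau _{7})_{c}\,g_{ab}$ and $\tau _{1}(\tau _{7})^{c}\varphi _{cab}$.

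The derivative term $(\nabla _{a}T_{nm}-\nabla _{n}T_{am})\varphi ^{nm}{}_{b}$ is the crux. Expanding $\nabla _{a}T_{nm}$ produces three kinds of pieces: a $\nabla \tau _{1}$ piece, a genuine $\nabla \tau _{7}$ piece, and a piece $(\tau _{7})^{e}\nabla _{a}\varphi _{enm}$ into which I would substitute $\nabla _{a}\varphi _{enm}=T_{a}{}^{x}\psi _{xenm}$ from (\ref{fulltorsion}). To tidy the first of these I would invoke the two consistency relations that hold in this class: $d\tau _{7}=0$, i.e.\ $\nabla _{[a}(\tau _{7})_{b]}=0$, so that $\nabla _{a}(\tau _{7})_{b}$ is symmetric; and, feeding this symmetry into (\ref{dtabcond2c}) with $\tau _{14}=\tau _{27}=0$ (whereupon the $\varphi _{m}{}^{ab}\nabla _{a}(\tau _{7})_{b}$ term drops out), the relation $\nabla _{m}\tau _{1}=\tau _{1}(\tau _{7})_{m}$. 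Using these, the $\nabla \tau _{1}$ pieces become algebraic, while the honest derivative pieces collect, after one application of (\ref{phiphi1}), into exactly $5\nabla _{a}(\tau _{7})_{b}+(\nabla ^{c}(\tau _{7})_{c})g_{ab}$, matching the derivative content of (\ref{riccit1t7}).

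The remaining labour, and the main obstacle, is the cubic-in-torsion piece $(\tau _{7})^{e}T_{n}{}^{x}\psi _{xenm}\varphi ^{nm}{}_{b}$. Here $\psi $ meets $\varphi $ in a single index, so I must use (\ref{phipsi}) with its antisymmetrisation, then expand $T_{n}{}^{x}=\tau _{1}\delta _{n}^{x}+(\tau _{7})^{d}\varphi _{dn}{}^{x}$ and reduce the surviving $\varphi $--$\varphi $ contractions again by (\ref{phiphi1}); this is where the sign and coefficient bookkeeping is most delicate. I expect this term to supply precisely the leftover symmetric contributions $+4(\tau _{7})^{c}(\tau _{7})_{c}\,g_{ab}-4(\tau _{7})_{a}(\tau _{7})_{b}$, together with an antisymmetric $\tau _{1}(\tau _{7})^{c}\varphi _{cab}$ contribution that cancels the $\varphi $-valued terms accumulated from the other three pieces. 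That cancellation is exactly the statement, noted after (\ref{torsricci}), that the consistency conditions of Proposition \ref{PropTorsConds} force the antisymmetric part of the Ricci expression to vanish; once it is carried out, the symmetric remainder assembles into $\left( \nabla ^{c}(\tau _{7})_{c}+5(\tau _{7})^{c}(\tau _{7})_{c}+6\tau _{1}^{2}\right) g_{ab}-5(\tau _{7})_{a}(\tau _{7})_{b}+5\nabla _{a}(\tau _{7})_{b}$, which is (\ref{riccit1t7}).
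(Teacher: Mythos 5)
Your proposal is correct and follows essentially the same route as the paper: substitute $T_{ab}=\tau _{1}g_{ab}+\left( \tau _{7}\right) ^{c}\varphi _{cab}$ into the general Ricci formula (\ref{torsricci}), reduce the contractions via Proposition \ref{propcontractions} and $\nabla \varphi =T\lrcorner \psi $, and then invoke the two class-specific consistency facts, $d\tau _{7}=0$ (so $\nabla _{a}\left( \tau _{7}\right) _{b}$ is symmetric, killing the $\psi $-contracted term) and $\nabla _{c}\tau _{1}=\tau _{1}\left( \tau _{7}\right) _{c}$ (cancelling the $\varphi $-valued terms), which is exactly what the paper's proof does. Your explicit derivation of $\nabla _{m}\tau _{1}=\tau _{1}\left( \tau _{7}\right) _{m}$ from (\ref{dtabcond2c}) is a nice touch the paper leaves implicit; the only blemishes are cosmetic (an index clash in your written cubic piece $\left( \tau _{7}\right) ^{e}T_{n}{}^{x}\psi _{xenm}\varphi ^{nm}{}_{b}$, and the precise split of the $\left\vert \tau _{7}\right\vert ^{2}g_{ab}$, $\left( \tau _{7}\right) _{a}\left( \tau _{7}\right) _{b}$ coefficients between the quadratic and cubic pieces, which depends on sign conventions), neither of which affects the argument.
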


\begin{proof}
In the general expression for the Ricci curvature, (\ref{torsricci}),
substitute 
\begin{equation*}
T_{ab}=\tau _{1}g_{ab}+\left( \tau _{7}\right) ^{c}\varphi _{cab}.
\end{equation*}%
We then get 
\begin{eqnarray*}
R_{ab} &=&\left( \nabla ^{c}\left( \tau _{7}\right) _{c}+5\left( \tau
_{7}\right) ^{c}\left( \tau _{7}\right) _{c}+6\tau _{1}^{2}\right)
g_{ab}-5\left( \tau _{7}\right) _{a}\left( \tau _{7}\right) _{b}+5\nabla
_{b}\left( \tau _{7}\right) _{a} \\
&&-\psi _{ab}^{\ \ \ \ cd}\nabla _{c}\left( \tau _{7}\right) _{d}+\varphi
_{ab}^{\ \ \ c}\nabla _{c}\tau _{1}-\tau _{1}\varphi _{ab}^{\ \ \ c}\left(
\tau _{7}\right) _{c}
\end{eqnarray*}%
However using the fact that $d\tau _{7}=0$, and hence that $\nabla
_{a}\left( \tau _{7}\right) _{b}$ is symmetric, and moreover that $\nabla
_{c}\tau _{1}=\tau _{1}\left( \tau _{7}\right) _{c}$, we obtain (\ref%
{riccit1t7}).
\end{proof}

\section{Torsion for $\Lambda _{7}$ deformations}

\label{seclam7deform}Now consider in detail the case when we have a
deformation in $\Lambda _{7}$. Here we have 
\begin{equation}
h_{ab}=v^{c}\varphi _{cab}  \label{h7}
\end{equation}%
Then, 
\begin{eqnarray*}
\chi _{bcd} &=&h_{[b}^{\ \ e}\varphi _{cd]e}=v^{e}\psi _{bcde}^{\ \ \ \ \ \ }
\\
\ast \chi _{mnpq} &=&4v_{[m}\varphi _{npq]}
\end{eqnarray*}%
So we take a $G_{2}$-structure $\varphi $ and deform it to 
\begin{equation}
\tilde{\varphi}=\varphi +v^{e}\psi _{bcde}^{\ \ \ \ \ \ }  \label{g2deform}
\end{equation}%
For convenience, let 
\begin{equation*}
M=\left\vert v\right\vert ^{2}
\end{equation*}%
then, as we know, 
\begin{equation*}
s_{ab}=g_{ab}\left( 1+M\right) -v_{a}v_{b}
\end{equation*}%
We also know that 
\begin{eqnarray*}
\left( \frac{\det \tilde{g}}{\det g}\right) ^{\frac{1}{2}} &=&\left(
1+M\right) ^{\frac{2}{3}} \\
\tilde{g}_{ab} &=&\left( 1+M\right) ^{-\frac{2}{3}}\left( g_{ab}\left(
1+M\right) -v_{a}v_{b}\right) \\
\tilde{g}^{\tilde{a}\tilde{b}} &=&\left( 1+M\right) ^{-\frac{1}{3}}\left(
g^{ab}+v^{a}v^{b}\right) \\
\tilde{\varphi}_{a}^{\ \ \tilde{b}\tilde{c}} &=&\left( 1+M\right) ^{-\frac{2%
}{3}}\left( \varphi _{a}^{\ bc}-\chi _{a}^{\ \ bc}+v^{b}v_{m}\varphi _{a\ \
\ }^{\ \ cm}-v^{c}v_{m}\varphi _{a\ \ \ \ }^{\ \ bm}\right)
\end{eqnarray*}%
Note that the deformed metric defined above is always positive definite. To
see this, suppose $\xi ^{a}$ is some vector, then 
\begin{equation}
\tilde{g}_{ab}\xi ^{a}\xi ^{b}=\left( 1+\left\vert v\right\vert ^{2}\right)
^{-\frac{2}{3}}\left( \left\vert \xi \right\vert ^{2}+\left\vert
v\right\vert ^{2}\left\vert \xi \right\vert ^{2}-\left( v_{a}\xi ^{a}\right)
^{2}\right) \geq 0
\end{equation}%
since $\left( v_{a}\xi ^{a}\right) ^{2}\leq \left\vert v\right\vert
^{2}\left\vert \xi \right\vert ^{2}$. Therefore, under such a deformation,
the $3$-form $\tilde{\varphi}$ is always a positive $3$-form, and thus
indeed defines a $G_{2}$-structure. The deformation is also invertible.
Suppose we want to use a vector $\tilde{v}$ to get from $\tilde{\varphi}$
back to $\varphi .$ So we have 
\begin{equation}
\varphi =\tilde{\varphi}+\tilde{v}^{e}\tilde{\psi}_{bcde}^{\ \ \ \ \ \ }
\label{g2tilddeform}
\end{equation}%
Then, from (\ref{g2deform}), we obtain 
\begin{equation*}
\tilde{v}^{e}\tilde{\psi}_{bcde}^{\ \ \ \ \ \ }=-v^{e}\psi _{bcde}^{\ \ \ \
\ \ }
\end{equation*}%
Now we multiply both sides by $\tilde{\psi}^{\tilde{b}\tilde{c}\tilde{d}%
\tilde{a}}$. For the left hand side we obtain, using standard contraction
identities%
\begin{equation*}
\tilde{v}^{e}\tilde{\psi}_{bcde}^{\ \ \ \ \ \ }\tilde{\psi}^{\tilde{b}\tilde{%
c}\tilde{d}\tilde{a}}=24\tilde{v}^{a}
\end{equation*}%
For the right hand side, we use the expression for $\tilde{\psi}^{\tilde{b}%
\tilde{c}\tilde{d}\tilde{a}}$ (\ref{psiraised}) 
\begin{eqnarray*}
-v^{e}\psi _{bcde}^{\ \ \ \ \ \ }\tilde{\psi}^{\tilde{b}\tilde{c}\tilde{d}%
\tilde{a}} &=&-\left( 1+M\right) ^{-\frac{2}{3}}v^{e}\psi _{bcde}^{\ \ \ \ \
\ }\left( \psi ^{bcda}+\ast \chi ^{bcda}\right) \\
&=&-\left( 1+M\right) ^{-\frac{2}{3}}v^{e}\psi _{bcde}^{\ \ \ \ \ \ }\left(
\psi ^{bcda}+4v^{[b}\varphi ^{cda]}\right) \\
&=&-24\left( 1+M\right) ^{-\frac{2}{3}}v^{a}
\end{eqnarray*}%
Thus we have the following lemma. Here we decompose $\nabla v$ in terms of
representations of $G_{2}$ as 
\begin{equation}
\nabla _{a}v_{b}=v_{1}g_{ab}+v_{7}^{c}\varphi _{cab}+\left( v_{14}\right)
_{ab}+\left( v_{27}\right) _{ab}
\end{equation}%
where $v_{14}\in \Lambda _{14}^{2}$ and $v_{27}$ is traceless symmetric.

\begin{lemma}
\label{vtildelem}Suppose we have a deformation of a $G_{2}$-structure $%
\varphi $ given by 
\begin{equation*}
\varphi \longrightarrow \tilde{\varphi}=\tilde{\varphi}+v^{e}\psi _{bcde}^{\
\ \ \ \ \ }
\end{equation*}%
Then conversely, the deformation of the new $G_{2}$-structure $\tilde{\varphi%
}$ given by 
\begin{equation*}
\tilde{\varphi}\longrightarrow \tilde{\varphi}+\tilde{v}^{e}\tilde{\psi}%
_{bcde}^{\ \ \ \ \ \ }
\end{equation*}%
results in the original $G_{2}$-structure $\varphi $ if and only if 
\begin{equation*}
\tilde{v}^{a}=-\left( 1+M\right) ^{-\frac{2}{3}}v^{a}
\end{equation*}%
Moreover, $\tilde{v}$ has the following properties:

\begin{enumerate}
\item Denote the norm squared of $\tilde{v}$ with respect to the deformed
metric $\tilde{g}$ by $\tilde{M}$. Then, $\tilde{M}$ is given by 
\begin{equation}
\tilde{M}=\left\vert \tilde{v}\right\vert _{\tilde{g}}^{2}=\tilde{v}^{a}%
\tilde{v}^{b}\tilde{g}_{ab}=M\left( 1+M\right) ^{-2}  \label{mtilde}
\end{equation}

\item The covariant derivative $\tilde{\nabla}$ of $\tilde{v}$ with respect
to the deformed metric $\tilde{g}$ is given by 
\begin{eqnarray}
\tilde{\nabla}_{a}\tilde{v}_{c} &=&\frac{2}{3}\left( 1+M\right) ^{-\frac{7}{3%
}}v_{a}v_{c}\left( \left( 5+2M\right) v_{1}-\left( v_{27}\right)
_{mn}v^{m}v^{n}\right) -\left( 1+M\right) ^{-\frac{1}{3}}\left(
v_{27}\right) _{ac}  \label{deltilvtil} \\
&&-\frac{1}{3}\left( 1+M\right) ^{-\frac{4}{3}}g_{ac}\left( \left(
3+4M\right) v_{1}+\left( v_{27}\right) _{mn}v^{m}v^{n}\right) -\left(
1+M\right) ^{-\frac{4}{3}}\left( v_{14}\right) _{ac}  \notag \\
&&+\frac{1}{3}\left( 1+M\right) ^{-\frac{7}{3}}\left( 3v^{b}\left(
v_{27}\right) _{ba}v_{c}+\left( 1+3M\right) v^{b}\left( v_{27}\right)
_{bc}v_{a}\right)  \notag \\
&&+\frac{2}{3}\left( 1+M\right) ^{-\frac{7}{3}}\left( 3\varphi _{abd}^{\ \
}v^{b}\left( v_{7}\right) ^{d}v_{c}-\varphi _{cbd}^{\ \ }v^{b}\left(
v_{7}\right) ^{d}v_{a}\right)  \notag \\
&&+\frac{2}{3}\left( 1+M\right) ^{-\frac{7}{3}}\left( v_{a}v^{b}\left(
v_{14}\right) _{bc}-3v_{c}v^{b}\left( v_{14}\right) _{ba}\right) -\left(
1+M\right) ^{-\frac{4}{3}}\left( v_{7}\right) ^{b}\varphi _{bac}  \notag
\end{eqnarray}

\item Moreover, $\tilde{\nabla}\tilde{v}=0$ if and only if $\nabla v=0$.
\end{enumerate}

\begin{proof}
The calculation of $\tilde{M}$ is immediate. For the second part, we apply
Lemma \ref{lemdelconn} to calculate $\tilde{\nabla}_{a}\tilde{v}_{c}$.
Finally for the third part, it is trivial that $\tilde{\nabla}\tilde{v}=0$
if $\nabla v=0$. For the \textquotedblleft only if\textquotedblright\
statement, we can invert (\ref{deltilvtil}) to get $\nabla v$ in terms of $%
\tilde{\nabla}\tilde{v}$, in which case in becomes clear that $\nabla v=0$
if $\tilde{\nabla}\tilde{v}=0$.
\end{proof}
\end{lemma}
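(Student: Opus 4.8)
The biconditional $\tilde{v}^{a}=-(1+M)^{-\frac{2}{3}}v^{a}$ characterizing the inverse deformation has already been obtained in the computation immediately preceding the statement, by contracting $\tilde{v}^{e}\tilde{\psi}_{bcde}=-v^{e}\psi_{bcde}$ with $\tilde{\psi}^{\tilde{b}\tilde{c}\tilde{d}\tilde{a}}$ and applying (\ref{psiraised}). I therefore take this identity as the starting point and verify the three enumerated properties.

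For property (1) the argument is purely algebraic: substitute $\tilde{v}^{a}=-(1+M)^{-\frac{2}{3}}v^{a}$ and the deformed metric (\ref{p7gabdef}) into $\tilde{M}=\tilde{v}^{a}\tilde{v}^{b}\tilde{g}_{ab}$. The contraction produces $(1+M)^{-2}\left(M(1+M)-M^{2}\right)=M(1+M)^{-2}$, which is the claim.

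The core of the lemma is property (2). My plan is to first lower the index with $\tilde{g}$: using (\ref{p7gabdef}) the contraction $\left((1+M)g_{cd}-v_{c}v_{d}\right)v^{d}=v_{c}$ collapses, giving $\tilde{v}_{c}=-(1+M)^{-\frac{4}{3}}v_{c}$. I then write $\tilde{\nabla}_{a}\tilde{v}_{c}=\nabla_{a}\tilde{v}_{c}-\delta\Gamma_{a\ \ \ c}^{\ \ d}\tilde{v}_{d}$ with $\delta\Gamma$ supplied by Lemma \ref{lemdelconn}. The ordinary-derivative piece $\nabla_{a}\tilde{v}_{c}=-(1+M)^{-\frac{4}{3}}\nabla_{a}v_{c}+\frac{4}{3}(1+M)^{-\frac{7}{3}}(\nabla_{a}M)v_{c}$ is expanded by inserting the $G_{2}$-decomposition $\nabla_{a}v_{b}=v_{1}g_{ab}+v_{7}^{c}\varphi_{cab}+(v_{14})_{ab}+(v_{27})_{ab}$ together with $\nabla_{a}M=2v^{b}\nabla_{a}v_{b}$. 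For the connection piece I compute $\nabla_{d}s_{ab}=(\nabla_{d}M)g_{ab}-(\nabla_{d}v_{a})v_{b}-v_{a}(\nabla_{d}v_{b})$ from $s_{ab}$ in (\ref{sab7}), substitute into (\ref{deltagamma}), and contract against $\tilde{v}_{d}$ using the explicit inverse metric (\ref{p7gabdefinv}). Collecting all contributions and reducing every $\varphi$--$\varphi$ and $\varphi$--$\psi$ contraction via Proposition \ref{propcontractions} should reproduce (\ref{deltilvtil}). I expect this to be the main obstacle: it is a long, bookkeeping-intensive calculation in which each of the four pieces $v_{1},v_{7},v_{14},v_{27}$ generates several terms carrying different powers of $(1+M)$, and the symmetric versus antisymmetric parts of the quadratic contractions $v^{b}(v_{14})_{bc}$, $v^{b}(v_{27})_{bc}$, and $\varphi_{abd}v^{b}v_{7}^{d}$ must be tracked with care.

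For property (3) the ``if'' direction is immediate: $\nabla v=0$ forces $v_{1}=0$, $v_{7}=0$, $v_{14}=0$, $v_{27}=0$, whereupon every term on the right-hand side of (\ref{deltilvtil}) vanishes and $\tilde{\nabla}\tilde{v}=0$. For the converse I would use invertibility of the deformation: since the deformation of $\tilde{\varphi}$ by $\tilde{v}$ returns $\varphi$, the formula (\ref{deltilvtil}) applied with the two $G_{2}$-structures interchanged expresses $\nabla_{a}v_{c}$ as a $(1+\tilde{M})$-weighted linear combination of the $\tilde{g}$-representation components of $\tilde{\nabla}\tilde{v}$. Equivalently, for fixed $v$ and $M$ the linear map $(v_{1},v_{7},v_{14},v_{27})\mapsto\tilde{\nabla}\tilde{v}$ is invertible, so $\tilde{\nabla}\tilde{v}=0$ forces $\nabla v=0$.
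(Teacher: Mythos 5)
Your proposal is correct and follows essentially the same route as the paper: the characterization of $\tilde{v}$ comes from the $\tilde{\psi}$-contraction computation preceding the lemma, property (1) is a direct substitution, property (2) is obtained by applying Lemma \ref{lemdelconn} to compute $\tilde{\nabla}_{a}\tilde{v}_{c}$ (your index-lowering step $\tilde{v}_{c}=-(1+M)^{-\frac{4}{3}}v_{c}$ and the split into the $\nabla_{a}\tilde{v}_{c}$ and $\delta\Gamma$ pieces is exactly the intended calculation), and property (3) is trivial in one direction and follows by inverting (\ref{deltilvtil}) in the other. Your interchange-symmetry phrasing of the inversion is a clean way to realize what the paper states tersely, but it is the same underlying argument.
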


Now let us use the expression for deformed torsion (\ref{Tanlow2}) to write
it down in terms of $v$. First, we have 
\begin{equation*}
\nabla _{d}s_{pq}=2g_{pq}v_{m}\left( \nabla _{d}v^{m}\right) -\left( \nabla
_{d}v_{p}\right) v_{q}-\left( \nabla _{d}v_{q}\right) v_{p}
\end{equation*}%
and thus, 
\begin{eqnarray*}
\delta _{n}^{c}\nabla _{b}s_{ad}-\frac{1}{9}\delta _{a}^{c}\tilde{g}_{bn}%
\tilde{g}^{\tilde{p}\tilde{q}}\nabla _{d}s_{pq}\newline
&=&\nabla _{e}s_{pq}\left( \delta _{n}^{c}\delta _{b}^{e}\delta
_{a}^{p}\delta _{d}^{q}-\frac{1}{9}\delta _{a}^{c}\delta _{d}^{e}\tilde{g}%
_{bn}\tilde{g}^{\tilde{p}\tilde{q}}\right) \\
&=&\left( 1+M\right) ^{-1}\nabla _{e}s_{pq}\left( \left( 1+M\right) \delta
_{n}^{c}\delta _{b}^{e}\delta _{a}^{p}\delta _{d}^{q}\right. \\
&&\left. -\frac{1}{9}\delta _{a}^{c}\delta _{d}^{e}\left( g_{bn}\left(
1+M\right) -v_{b}v_{n}\right) \left( g^{pq}+v^{p}v^{q}\right) \right)
\end{eqnarray*}%
So, overall, we have 
\begin{eqnarray}
\tilde{T}_{an} &=&\frac{1}{24}\left( 1+M\right) ^{-\frac{4}{3}}\left( \left(
24T_{a}^{\ m}+T_{a}^{\ e}\psi _{ebcd}\ast \chi ^{mbcd}+\psi ^{mbcd}\nabla
_{a}\chi _{bcd}\right. \right.  \label{tantildefull} \\
&&+\left. \nabla _{a}\chi _{bcd}^{\ }\ast \chi ^{mbcd}\right) s_{mn}-3\left(
1+M\right) ^{-1}\left( \varphi _{c}^{\ bd}-\chi _{c}^{\ \
bd}+v^{b}v_{m}\varphi _{c\ \ \ }^{\ \ dm}-v^{d}v_{m}\varphi _{c\ \ \ \ }^{\
\ bm}\right) \times  \notag \\
&&\left. \times \nabla _{e}s_{pq}\left( \left( 1+M\right) \delta
_{n}^{c}\delta _{b}^{e}\delta _{a}^{p}\delta _{d}^{q}-\frac{1}{9}\delta
_{a}^{c}\delta _{d}^{e}\left( g_{bn}\left( 1+M\right) -v_{b}v_{n}\right)
\left( g^{pq}+v^{p}v^{q}\right) \right) \right) .  \notag
\end{eqnarray}%
It makes sense to expand $\nabla v$ also in terms of $G_{2}$-representations:%
\begin{equation}
\nabla _{a}v_{b}=v_{1}g_{ab}+v_{7}^{c}\varphi _{cab}+\left( v_{14}\right)
_{ab}+\left( v_{27}\right) _{ab}  \label{delvdecom}
\end{equation}%
where $v_{14}\in \Lambda _{14}^{2}$ and $v_{27}$ is traceless symmetric.
Together with the similar expansion of $T_{an}$ (\ref{torsioncomps}), after
some manipulations, we obtain:

\begin{theorem}
\label{ThmvTors}Given a $G_{2}$-structure $\varphi $ with full torsion
tensor $T_{ab}$, a deformation of $\varphi $ which lies $\Lambda _{7}^{3}$
given by $\varphi \longrightarrow \varphi +v_{e}\psi _{bcd}^{\ \ \ \ \ \ e}$
results in a new $G_{2}$-structure $\tilde{\varphi}$ with torsion tensor $%
\tilde{T}_{an}$ given by%
\begin{eqnarray}
\tilde{T}_{an} &=&\left( 1+M\right) ^{-\frac{4}{3}}\left( v_{1}\left(
v_{a}v_{n}-\left( 1+M\right) g_{an}\right) -\frac{4}{3}\left( 1+M\right)
v_{1}\varphi _{anm}v^{m}\right.  \label{tantildeexp} \\
&&-\left( 1+\frac{4}{3}M\right) \varphi _{anm}v_{7}^{m}-\frac{1}{3}\psi
_{anmp}v^{m}v_{7}^{p}+\frac{5}{3}v_{a}\varphi _{nmp\ }v^{m}v_{7}^{p}+\frac{4%
}{3}v_{n}\varphi _{amp}v^{m}v_{7}^{p}  \notag \\
&&+\frac{1}{3}v_{7}^{m}v_{m}\varphi _{\ \ an}^{p}v_{p}+\frac{1}{3}%
v_{n}\left( v_{7}\right) _{a}+\frac{8}{3}v_{a}\left( v_{7}\right)
_{n}-\left( 1+M\right) \left( v_{14}\right) _{an}  \notag \\
&&-2v_{m}\left( v_{14}\right) _{\ \ [a}^{m}v_{n]}^{\ }+\frac{1}{3}\varphi
_{anm}v_{14}^{mp}v_{p}+\frac{1}{3}\psi _{anmp}^{\ \ \ \
}v_{q}v^{m}v_{14}^{pq}-\left( 1+M\right) \left( v_{27}\right) _{an}  \notag
\\
&&+v_{m}\left( v_{27}\right) _{\ \ a}^{m}v_{n}-\left( 1+M\right) \varphi _{\
\ \ a}^{mp}\left( v_{27}\right) _{pn}v_{m}-\frac{1}{3}\varphi
_{anm}v_{27}^{mp}v_{p}  \notag \\
&&\left. +\frac{1}{3}\psi _{anmp}^{\ \ \ \
}v^{m}v_{27}^{pq}v_{q}+v_{a}\varphi _{nmp}v^{m}v_{27}^{pq}v_{q}-\frac{1}{3}%
\varphi _{an}^{\ \ \ m}v_{m}v_{27}^{pq}v_{p}v_{q}\right)  \notag \\
&&+\left( 1+M\right) ^{-\frac{1}{3}}\left( \tau _{1}g_{an}+\tau _{1}\varphi
_{\ \ an}^{m}v_{m}+\varphi _{anm}\tau _{7}^{m}+v_{a}\left( \tau _{7}\right)
_{n}-g_{an}\tau _{7}^{m}v_{m}\right.  \notag \\
&&\left. +\psi _{anmp}\tau _{7}^{m}v^{p}+\left( \tau _{14}\right)
_{an}-\varphi _{nmp}v^{m}\left( \tau _{14}\right) _{\ \ a}^{p}+\left( \tau
_{27}\right) _{an}+\varphi _{nmp}v^{m}\left( \tau _{27}\right) _{\ \
a}^{p}\right)  \notag
\end{eqnarray}
\end{theorem}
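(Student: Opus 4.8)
The plan is to specialize the master formula (\ref{Tanlow2}) for $\tilde{T}_{an}$ to the case $\chi\in\Lambda_{7}^{3}$ and then expand every contraction in terms of the $G_{2}$-representation pieces of $\nabla v$ and of the old torsion $T$. All the ingredients are already in hand: for a $\Lambda_{7}$ deformation one has $\chi_{bcd}=v^{e}\psi_{bcde}$ and $\ast\chi_{mnpq}=4v_{[m}\varphi_{npq]}$, together with the explicit $s_{ab}=(1+M)g_{ab}-v_{a}v_{b}$, the determinant ratio (\ref{p7detg}), the deformed metric and inverse metric (\ref{p7gabdef})--(\ref{p7gabdefinv}), and the raised form $\tilde{\varphi}_{a}^{\ \tilde{b}\tilde{c}}$ of (\ref{phitildpi7raised2}). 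Substituting these into (\ref{Tanlow2}) yields the intermediate expression (\ref{tantildefull}), in which the only remaining work is to reduce the contractions of $\varphi$, $\psi$ and $v$ against $\nabla_{d}s_{pq}$ and against $T_{a}^{\ m}$.

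First I would handle the pure-deformation (derivative) contributions. From $s_{pq}=(1+M)g_{pq}-v_{p}v_{q}$ one gets $\nabla_{d}s_{pq}=2g_{pq}v_{m}\nabla_{d}v^{m}-(\nabla_{d}v_{p})v_{q}-(\nabla_{d}v_{q})v_{p}$, so every occurrence of $\nabla s$ is linear in $\nabla v$. Inserting the decomposition (\ref{delvdecom}), $\nabla_{a}v_{b}=v_{1}g_{ab}+v_{7}^{c}\varphi_{cab}+(v_{14})_{ab}+(v_{27})_{ab}$, turns each $\nabla s$ term into a sum carrying one of $v_{1}$, $v_{7}$, $v_{14}$, $v_{27}$, multiplied by contractions of $\varphi$ and $\psi$ with up to three factors of $v$. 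These are reduced using Proposition \ref{propcontractions}, and the bookkeeping is organized by collecting the coefficient of each independent tensor structure ($g_{an}$, $v_{a}v_{n}$, $\varphi_{anm}v^{m}$, $\varphi_{anm}v_{7}^{m}$, $\psi_{anmp}v^{m}v_{7}^{p}$, $(v_{14})_{an}$, $(v_{27})_{an}$, and the cubic-in-$v$ terms). This produces the first two large blocks of (\ref{tantildeexp}), all proportional to $(1+M)^{-4/3}$.

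Next I would treat the old-torsion contributions, which enter from two places that must be combined. One is the explicit term $T_{a}^{\ e}\psi_{ebcd}\ast\chi^{mbcd}$ alongside $24T_{a}^{\ m}$; the other is hidden inside $\nabla\chi$, since $\nabla_{a}\chi_{bcd}=(\nabla_{a}v^{e})\psi_{bcde}+v^{e}\nabla_{a}\psi_{bcde}$ and the identity (\ref{psitorsion}), $\nabla_{a}\psi_{bcde}=-4T_{a[b}\varphi_{cde]}$, feeds the torsion back in. I would split $\nabla\chi$ accordingly: its $\nabla v$ part joins the deformation block above, while its $T$-part merges with the explicit $T\psi\ast\chi$ contribution. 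Substituting the torsion split (\ref{torsioncomps}), $T=\tau_{1}g+\tau_{7}\lrcorner\varphi+\tau_{14}+\tau_{27}$, contracting with $s_{mn}$ and $\ast\chi=-v^{\flat}\wedge\varphi$ through the $\varphi\psi$ and $\psi\psi$ identities, and using that the $(1+M)g_{mn}$ part of $s_{mn}$ raises the overall power to $(1+M)^{-1/3}$, yields the final block of (\ref{tantildeexp}) in $\tau_{1}$, $\tau_{7}$, $\tau_{14}$, $\tau_{27}$ with single factors of $v$.

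I expect the main obstacle to be organizational rather than conceptual: the expansion generates a very large number of intermediate terms, and the delicate point is disentangling the cubic-in-$v$ contractions (for instance $v^{b}(v_{27})_{bc}v_{a}$ versus $v^{b}(v_{27})_{bc}v_{n}$, and the $\psi$-contractions $\psi_{anmp}v^{m}v_{27}^{pq}v_{q}$) so that each lands with the correct coefficient and antisymmetrization. Keeping the traceless-symmetric property of $v_{27}$ and $\tau_{27}$, and the $\omega\lrcorner\varphi=0$ property of $v_{14}$ and $\tau_{14}$, in play throughout is what collapses many a priori distinct contractions; and checking that contracting $a$ with $n$ reproduces the $\tilde{\tau}_{1}$ computed independently in the preceding lemma gives the main internal consistency test.
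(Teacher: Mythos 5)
Your proposal follows essentially the same route as the paper: specialize the general formula (\ref{Tanlow2}) to $\chi = v\lrcorner\psi$ using $s_{ab}=(1+M)g_{ab}-v_av_b$ and the explicit deformed metric data, then expand $\nabla v$ via (\ref{delvdecom}) and $T$ via (\ref{torsioncomps}) and grind through the contraction identities of Proposition \ref{propcontractions}. You also correctly make explicit a step the paper leaves implicit — that $\nabla_{a}\chi_{bcd}=(\nabla_{a}v^{e})\psi_{bcde}+v^{e}\nabla_{a}\psi_{bcde}$ feeds the old torsion back in through (\ref{psitorsion}) — so the plan is sound and matches the paper's proof.
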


From this we can also extract the individual components of $\tilde{T}_{an}$
in the representations of $G_{2}$. So first we have the component of $\tilde{%
T}_{an}$ in $\tilde{W}_{1}$: 
\begin{eqnarray}
\tilde{\tau}_{1} &=&\frac{1}{7}\tilde{T}_{ab}\tilde{g}^{\tilde{a}\tilde{b}%
}=\left( 1+M\right) ^{-\frac{1}{3}}\left( \tilde{T}_{ab}g^{ab}+v^{a}v^{b}%
\tilde{T}_{ab}\right)  \notag \\
&=&\left( 1+M\right) ^{-\frac{2}{3}}\left( \left( 1+\frac{1}{7}M\right) \tau
_{1}-v_{1}-\frac{6}{7}\left( \tau _{7}\right) ^{a}v_{a}+\frac{3}{7}\left(
v_{7}\right) ^{a}v_{a}+\frac{1}{7}\left( \tau _{27}\right)
_{ab}v^{a}v^{b}\right)  \label{tau1tild}
\end{eqnarray}

The $7$-dimensional component is given by 
\begin{equation}
\left( \tilde{\tau}_{7}\right) _{c}=\frac{1}{6}\tilde{T}_{ab}\tilde{\varphi}%
_{\ \ \ \ c}^{\tilde{a}\tilde{b}}=\frac{1}{6}\left( 1+M\right) ^{-\frac{2}{3}%
}\tilde{T}_{ab}\left( \varphi _{\ \ \ c}^{ab}-v^{m}\psi _{\ \ \ \
cm}^{ab}+v^{a}v_{m}\varphi _{\ \ \ \ c}^{bm}-v^{b}v_{m}\varphi _{\ \ \ \ \
c}^{am}\right)  \label{tau7tilda}
\end{equation}%
where we have used (\ref{phitildpi7raised2}). Now using the expression for $%
\tilde{T}_{ab}$ (\ref{tantildeexp}), after some manipulations, we obtain%
\begin{eqnarray}
\left( \tilde{\tau}_{7}\right) _{c} &=&\left( \tau _{7}\right) _{c}-\frac{1}{%
6}\varphi _{c}^{\ \ ab}\left( \tau _{7}\right) _{a}v_{b}-\frac{1}{6}%
v^{a}\left( \tau _{27}\right) _{ac}-\frac{1}{6}v^{a}\left( \tau _{14}\right)
_{ac}  \label{tau7tild} \\
&&+\frac{v_{c}}{6\left( 1+M\right) }\left( \left( \tau _{27}\right)
_{ab}v^{a}v^{b}+6\tau _{1}-6\left( \tau _{7}\right) _{a}v^{a}-8v_{1}+3\left(
v_{7}\right) _{a}v^{a}\right)  \notag \\
&&-\frac{1}{6\left( 1+M\right) }\left( 3\left( M+2\right) \left(
v_{7}\right) _{c}+v^{a}\left( v_{27}\right) _{ac}+\varphi _{ca}^{\ \ \
b}v^{a}\left( v_{27}\right) _{bd}v^{d}+3\varphi _{cab}v^{a}\left(
v_{7}\right) ^{b}\right)  \notag
\end{eqnarray}%
Let us now find the $\tilde{W}_{14}$ component. We have $\tilde{T}_{[an]}\in
\Lambda ^{2}$, so 
\begin{equation}
\pi _{14}\left( \tilde{T}_{[an]}\right) =\frac{2}{3}\tilde{T}_{[an]}-\frac{1%
}{6}\tilde{T}_{mp}\tilde{\psi}_{\ \ \ \ \ an}^{\tilde{m}\tilde{p}}
\label{pi14ttild1}
\end{equation}%
The skew-symmetric part of (\ref{tantildeexp}) is given by:

\begin{eqnarray}
\tilde{T}_{[an]} &=&\left( 1+M\right) ^{-\frac{4}{3}}\left( -\frac{4}{3}%
\left( 1+M\right) v_{1}^{\ \ \ \ }\varphi _{anm}v^{m}-\left( 1+\frac{4}{3}%
M\right) \varphi _{anm}v_{7}^{m}\right.  \label{tTildeasym} \\
&&-\frac{1}{3}\psi _{anmp}v^{m}v_{7}^{p}+v_{[a}\varphi _{n]mp\
}v^{m}v_{7}^{p}+\frac{1}{3}v_{7}^{m}v_{m}\varphi _{\ an}^{p}v_{p}+\frac{7}{3}%
v_{[a}\left( v_{7}\right) _{n]}  \notag \\
&&-\left( 1+M\right) \left( v_{14}\right) _{an}-2v_{m}\left( v_{14}\right)
_{\ \ [a}^{m}v_{n]}^{\ }+\frac{1}{3}\varphi _{anm}v_{14}^{mp}v_{p}+\frac{1}{3%
}\psi _{anmp}^{\ }v_{q}v^{m}v_{14}^{pq}  \notag \\
&&+v_{m}\left( v_{27}\right) _{\ \ [a}^{m}v_{n]}-\left( 1+M\right) \varphi
_{\ \ \ [a}^{mp}\left( v_{27}\right) _{n]p}v_{m}-\frac{1}{3}\varphi
_{anm}v_{27}^{mp}v_{p}  \notag \\
&&\left. +\frac{1}{3}\psi _{anmp}^{\ \ \ \
}v^{m}v_{27}^{pq}v_{q}+v_{[a}\varphi _{n]mp}v^{m}v_{27}^{pq}v_{q}-\frac{1}{3}%
\varphi _{an}^{\ \ \ \ m}v_{m}v_{27}^{pq}v_{p}v_{q}\right)  \notag \\
&&+\left( 1+M\right) ^{-\frac{1}{3}}\left( \tau _{1}\varphi _{\ \
an}^{m}v_{m}+\varphi _{anm}\tau _{7}^{m}+v_{[a}\left( \tau _{7}\right)
_{n]}+\psi _{anmp}\tau _{7}^{m}v^{p}\right.  \notag \\
&&\left. +\left( \tau _{14}\right) _{an}+\varphi _{mp[a}v^{m}\left( \tau
_{14}\right) _{\ \ n]}^{p}-\varphi _{mp[a}v^{m}\left( \tau _{27}\right) _{\
\ n]}^{p}\right)  \notag
\end{eqnarray}%
Now note that 
\begin{equation*}
\tilde{\psi}_{\ \ \ an}^{\tilde{m}\tilde{p}}=\tilde{\psi}^{\tilde{m}\tilde{p}%
\tilde{q}\tilde{r}}\tilde{g}_{qa}\tilde{g}_{nr}=\left( 1+M\right)
^{-2}\left( \psi ^{mpqr}+4v^{[m}\varphi ^{pqr]}\right) \left( g_{aq}\left(
1+M\right) -v_{a}v_{q}\right) \left( g_{nr}\left( 1+M\right)
-v_{n}v_{r}\right)
\end{equation*}%
Hence the $14$-dimensional component is 
\begin{eqnarray}
\left( \tilde{\tau}_{14}\right) _{an} &=&\left( 1+M\right) ^{-\frac{4}{3}%
}\left( \frac{10}{3}\left( v_{7}\right) _{[a}v_{n]}+\frac{4}{3}v_{[a}\varphi
_{\ \ \ \ n]}^{mp}v_{m}\left( v_{7}\right) _{p}-\left( \frac{5}{6}+\frac{1}{2%
}M\right) \psi _{\ \ \ an}^{mp}v_{m}\left( v_{7}\right) _{p}\right.
\label{tTilde14} \\
&&+\frac{1}{3}\left( v_{7}\right) _{m}v^{m}v_{p}\varphi _{\ \ an}^{p}-\frac{1%
}{3}M\left( v_{7}\right) _{m}\varphi _{\ \ an}^{m}-\left( 1+M\right) \left(
v_{14}\right) _{an}-2v_{m}\left( v_{14}\right) _{\ \ \ [a}^{m}v_{n]}  \notag
\\
&&+\frac{1}{3}\varphi _{\ \ an}^{m}v_{p}\left( v_{14}\right) _{\ \ m}^{p}+%
\frac{1}{3}\psi _{\ \ \ an}^{mp}v_{m}\left( v_{14}\right) _{pq}v^{q}-\frac{1%
}{3}\varphi _{\ \ \ an}^{m}v_{m}\left( v_{27}\right) _{pq}v^{p}v^{q}  \notag
\\
&&+\left( M+1\right) \varphi _{\ \ \ [a}^{mp}\left( v_{27}\right) _{n]p}^{\
}v_{m}+\frac{1}{6}\left( M-1\right) \varphi _{\ \ an}^{m}\left(
v_{27}\right) _{\ \ m}^{p}v_{p}  \notag \\
&&\left. +\frac{2}{3}v_{m}\left( v_{27}\right) _{\ \ \ [a}^{m}v_{n]}-\frac{4%
}{3}\varphi _{\ \ \ \ [a}^{mp}v_{n]}v_{m}\left( v_{27}\right) _{pq}v^{q}+%
\frac{1}{3}\psi _{\ \text{\ }an}^{mp}v_{m}\left( v_{27}\right)
_{pq}v^{q}\right)  \notag \\
&&+\left( 1+M\right) ^{-\frac{1}{3}}\left( -\frac{1}{6}M\varphi _{\ \
an}^{m}\left( \tau _{7}\right) _{m}+\frac{1}{6}\psi _{\ \ \ an}^{mp}\left(
\tau _{7}\right) _{m}v_{p}-\frac{1}{3}\varphi _{\ \ \ \ [a}^{mp}v_{n]}\left(
\tau _{7}\right) _{m}v_{p}+\frac{2}{3}v_{[a}\left( \tau _{7}\right)
_{n]}\right.  \notag \\
&&+\frac{1}{6}\varphi _{\ \ \ an}^{m}v_{m}v_{p}\left( \tau _{7}\right)
^{p}+\left( \tau _{14}\right) _{an}+\frac{1}{6}\psi _{\ \ \ \ an}^{mp}\left(
\tau _{14}\right) _{\ \ m}^{q}v_{p}v_{q}-\frac{1}{3}\varphi _{\ \
an}^{m}v_{p}\left( \tau _{14}\right) _{\ \ m}^{p}  \notag \\
&&\left. -\varphi _{\ \ \ [a}^{mp}\left( \tau _{27}\right) _{n]p}v_{m}+\frac{%
1}{6}\varphi _{\ \ an}^{m}\left( \tau _{27}\right) _{\ \ m}^{p}v_{p}+\frac{1%
}{6}\psi _{\ \ \ \ an}^{mp}\left( \tau _{27}\right) _{\ \ \
m}^{q}v_{p}v_{q}\right)  \notag
\end{eqnarray}

Finally, the component in $\tilde{W}_{27}$ is now given by 
\begin{equation*}
\left( \tilde{\tau}_{27}\right) _{an}=\tilde{T}_{(an)}-\tau _{1}\tilde{g}%
_{an}
\end{equation*}%
where $\tilde{T}_{(an)}$ is the symmetric part of (\ref{tantildeexp}):%
\begin{eqnarray}
\tilde{T}_{(an)} &=&\left( 1+M\right) ^{-\frac{4}{3}}\left( v_{1}\left(
v_{a}v_{n}-\left( 1+M\right) g_{an}\right) +3v_{(a}\varphi _{n)mp\
}v^{m}v_{7}^{p}\right.  \label{tTildesym} \\
&&+3v_{(a}\left( v_{7}\right) _{n)}-\left( 1+M\right) \left( v_{27}\right)
_{an}+v_{m}\left( v_{27}\right) _{\ \ (a}^{m}v_{n)}  \notag \\
&&\left. -\left( 1+M\right) \varphi _{\ \ \ (a}^{mp}\left( v_{27}\right)
_{n)p}v_{m}+v_{(a}\varphi _{n)mp}v^{m}v_{27}^{pq}v_{q}\right)  \notag \\
&&+\left( 1+M\right) ^{-\frac{1}{3}}\left( \tau _{1}g_{an}+v_{(a}\left( \tau
_{7}\right) _{n)}-g_{an}\tau _{7}^{m}v_{m}\right.  \notag \\
&&\left. -\varphi _{mp(a}v^{m}\left( \tau _{14}\right) _{\ \ n)}^{p}+\left(
\tau _{27}\right) _{an}+\varphi _{mp(a}v^{m}\left( \tau _{27}\right) _{\ \
n)}^{p}\right)  \notag
\end{eqnarray}%
and 
\begin{equation*}
\tilde{\tau}_{1}\tilde{g}_{an}=\left( 1+M\right) ^{-\frac{4}{3}}\left(
\left( 1+M\right) g_{an}-v_{a}v_{n}\right) \left( \left( 1+\frac{1}{7}%
M\right) \tau _{1}-v_{1}-\frac{6}{7}\tau _{7}^{m}v_{m}+\frac{3}{7}%
v_{7}^{m}v_{m}-\frac{1}{7}\tau _{27}^{mp}v_{m}v_{p}\right)
\end{equation*}%
Thus overall, we have 
\begin{eqnarray}
\left( \tilde{\tau}_{27}\right) _{an} &=&\left( 1+M\right) ^{-\frac{4}{3}%
}\left( -\frac{3}{7}\left( \left( 1+M\right) g_{an}-v_{a}v_{n}\right)
v_{7}^{m}v_{m}+3v_{(a}\varphi _{n)mp\ }v^{m}v_{7}^{p}\right.
\label{tTilde27} \\
&&+3v_{(a}\left( v_{7}\right) _{n)}-\left( 1+M\right) \left( v_{27}\right)
_{an}+v_{m}\left( v_{27}\right) _{\ \ (a}^{m}v_{n)}-\left( 1+M\right)
\varphi _{\ \ \ (a}^{mp}\left( v_{27}\right) _{n)p}v_{m}  \notag \\
&&\left. +v_{(a}\varphi _{n)mp}v^{m}v_{27}^{pq}v_{q}+\left( 1+\frac{1}{7}%
M\right) \tau _{1}v_{a}v_{n}-\frac{6}{7}\tau _{7}^{m}v_{m}v_{a}v_{n}+\frac{1%
}{7}\tau _{27}^{mp}v_{m}v_{p}v_{a}v_{n}\right)  \notag \\
&&+\left( 1+M\right) ^{-\frac{1}{3}}\left( -\frac{1}{7}M\tau
_{1}g_{an}+v_{(a}\left( \tau _{7}\right) _{n)}-\frac{1}{7}g_{an}\tau
_{7}^{m}v_{m}\right.  \notag \\
&&\left. -\varphi _{mp(a}v^{m}\left( \tau _{14}\right) _{\ \ n)}^{p}+\left(
\tau _{27}\right) _{an}+\varphi _{mp(a}v^{m}\left( \tau _{27}\right) _{\ \
n)}^{p}-\frac{1}{7}\tau _{27}^{mp}v_{m}v_{p}g_{an}\right)  \notag
\end{eqnarray}%
The expressions (\ref{tau1tild}), (\ref{tau7tild}), (\ref{tTilde14}) and (%
\ref{tTilde27}) give us the components of the new torsion $\tilde{T}$ in $%
\tilde{W}_{1}$, $\tilde{W}_{7}$, $\tilde{W}_{14}$ and $\tilde{W}_{27}$,
respectively. As we can see these expressions are quite complicated, so for
a generic deformation vector $v$, in general we would obtain

\begin{theorem}
\label{ThmFullDeform}Given a $G_{2}$-structure $\varphi $ with full torsion
tensor $T_{ab}$, a deformation of $\varphi $ which lies $\Lambda _{7}^{3}$
given by $\varphi \longrightarrow \varphi +v_{e}\psi _{bcd}^{\ \ \ \ \ \ e}$
results in a new $G_{2}$-structure $\tilde{\varphi}$ with a torsion tensor $%
\tilde{T}_{an}$ if only if the components $v_{1}$,$v_{7}$, $v_{14}$ and $%
v_{27}$ of $\nabla v$ satisfy the following equations 
\begin{subequations}%
\label{tt0allsol} 
\begin{eqnarray}
v_{1} &=&\tau _{1}-\frac{3}{7}\left( \tau _{7}\right) _{a}v^{a}-\frac{1}{7}%
\frac{\left( 7+3M\right) }{\left( 1+M\right) ^{\frac{1}{3}}}\tilde{\tau}_{1}-%
\frac{3}{7}\left( \tilde{\tau}_{7}\right) _{a}v^{a}+\frac{1}{14}\left(
1+M\right) ^{\frac{1}{3}}\left( \tilde{\tau}_{27}\right) _{ab}v^{a}v^{b}
\label{v1sol} \\
\left( v_{7}\right) ^{c} &=&\left( \tau _{7}\right) ^{c}-\frac{1}{3}\tau
_{1}v^{c}+\frac{1}{3}\varphi _{\ \ ab}^{c}\left( \tau _{7}\right) ^{a}v^{b}-%
\frac{1}{6}\left( \tau _{14}\right) _{a}^{\ \ c}v^{a}-\frac{1}{3}\left( \tau
_{27}\right) _{a}^{\ \ c}v^{a}+\frac{4}{3}\frac{\tilde{\tau}_{1}}{\left(
1+M\right) ^{\frac{1}{3}}}v^{c}  \label{v7sol} \\
&&-\left( \tilde{\tau}_{7}\right) ^{c}-\frac{1}{2}\varphi _{\ \
ab}^{c}\left( \tilde{\tau}_{7}\right) ^{a}v^{b}+\frac{1}{6}\left( 1+M\right)
^{\frac{1}{3}}\left( \tilde{\tau}_{27}\right) _{a}^{\ \ c}v^{a}  \notag \\
\left( v_{14}\right) _{ab} &=&\frac{1}{\left( M+9\right) }\left( \frac{4}{3}%
\left( M-27\right) \left( \tau _{7}\right) _{[a}v_{b]}-\frac{1}{3}\left(
M-27\right) \psi _{\ \ \ \ ab}^{mn}\left( \tau _{7}\right)
_{m}v_{n}-4M\varphi _{\ \ ab}^{m}\left( \tau _{7}\right) _{m}\right.
\label{v14sol} \\
&&+4\left( \tau _{7}\right) ^{m}v_{m}\varphi _{\ \ ab}^{n}v_{n}-24\varphi
_{\ \ \ \ \ [a}^{mn}v_{b]}\left( \tau _{7}\right) _{m}v_{n}+\frac{1}{2}%
\left( M+2\right) \left( M+9\right) \left( \tau _{14}\right) _{ab}  \notag \\
&&+\frac{1}{2}\left( M+9\right) \varphi _{a}^{\ \ mn}\varphi _{b}^{\ \ \
pq}v_{m}v_{p}\left( \tau _{14}\right) _{nq}+\left( M-7\right) v_{m}\left(
\tau _{14}\right) _{\ \ [a}^{m}v_{b]}  \notag \\
&&+8\varphi _{\ \ \ \ \ [a}^{mn}v_{b]}v_{n}v_{p}\left( \tau _{14}\right) _{\
\ m}^{p}-\frac{4}{3}Mv_{m}\left( \tau _{14}\right) _{\ \ n}^{m}\varphi _{\ \
ab}^{n}+4\psi _{\ \ \ \ \ ab}^{mn}v_{p}\left( \tilde{\tau}_{14}\right) _{\ \
m}^{p}v_{n}  \notag \\
&&+16v_{m}\left( \tau _{27}\right) _{\ \ [a}^{m}v_{b]}-\frac{4}{3}%
v_{m}\varphi _{\ \ ab}^{m}\left( \tau _{27}\right) _{np}v^{n}v^{p}+\frac{4}{3%
}Mv_{m}\left( \tau _{27}\right) _{\ \ n}^{m}\varphi _{\ \ ab}^{n}-4\psi _{\
\ \ \ \ ab}^{mn}v_{p}\left( \tau _{27}\right) _{\ \ m}^{p}v_{n}  \notag \\
&&+8\varphi _{\ \ \ \ \ [a}^{mn}v_{b]}v_{n}v_{p}\left( \tau _{27}\right) _{\
\ m}^{p}+\left( \frac{1}{6}\left( M+17\right) \left( 1+M\right) ^{\frac{1}{3}%
}\left( \tilde{\tau}_{27}\right) _{mn}v^{m}v^{n}-4\left( \tilde{\tau}%
_{7}\right) _{m}v^{m}\right) v_{p}\varphi _{\ \ ab}^{p}  \notag \\
&&+24\varphi _{\ \ \ \ \ [a}^{mn}v_{b]}\left( \tilde{\tau}_{7}\right)
_{m}v_{n}-2\left( M-15\right) \left( \tilde{\tau}_{7}\right) _{[a}v_{b]}+%
\frac{1}{2}\left( M-15\right) \psi _{\ \ \ \ ab}^{mn}\left( \tilde{\tau}%
_{7}\right) _{m}v_{n}  \notag \\
&&+4M\varphi _{\ \ ab}^{m}\left( \tilde{\tau}_{7}\right) _{m}-\left(
1+M\right) ^{\frac{1}{3}}\left( M+9\right) \left( \left( \tilde{\tau}%
_{14}\right) _{ab}+v^{m}\varphi _{mn[a}\left( \tilde{\tau}_{27}\right) _{\
b]}^{n}\right)  \notag \\
&&+16\left( 1+M\right) ^{-\frac{2}{3}}v_{m}\left( \tilde{\tau}_{14}\right)
_{\ \ [a}^{m}v_{b]}+8\left( 1+M\right) ^{-\frac{2}{3}}\varphi _{\ \ \ \ \
[a}^{mn}v_{b]}v_{n}v_{p}\left( \tilde{\tau}_{14}\right) _{\ \ m}^{p}  \notag
\\
&&+\left( M-3\right) \left( 1+M\right) ^{-\frac{2}{3}}v_{m}\left( \tilde{\tau%
}_{14}\right) _{\ \ n}^{m}\varphi _{\ \ ab}^{n}-4\left( 1+M\right) ^{-\frac{2%
}{3}}\psi _{\ \ \ \ \ ab}^{mn}v_{p}\left( \tilde{\tau}_{14}\right) _{\ \
m}^{p}v_{n}  \notag \\
&&-8\left( 1+M\right) ^{\frac{1}{3}}\varphi _{\ \ \ \ \
[a}^{mn}v_{b]}v_{n}v_{p}\left( \tilde{\tau}_{27}\right) _{\ \ m}^{p}+8\left(
1+M\right) ^{\frac{1}{3}}v_{m}\left( \tilde{\tau}_{27}\right) _{\ \
[a}^{m}v_{b]}  \notag \\
&&\left. +\frac{1}{6}\left( 7M-9\right) \left( 1+M\right) ^{\frac{1}{3}%
}v_{m}\left( \tilde{\tau}_{27}\right) _{\ \ n}^{m}\varphi _{\ \
ab}^{n}-4\left( 1+M\right) ^{\frac{1}{3}}\psi _{\ \ \ \ \
ab}^{mn}v_{p}\left( \tilde{\tau}_{27}\right) _{\ \ m}^{p}v_{n}\right)  \notag
\\
\left( v_{27}\right) _{ab} &=&\left( \tau _{27}\right) _{ab}+4\left( \tau
_{7}\right) _{(a}v_{b)}+\left( 4\left( 1+M\right) ^{-\frac{1}{3}}\tilde{\tau}%
_{1}-\frac{1}{2}\left( 1+M\right) ^{-\frac{2}{3}}\left( \tilde{\tau}%
_{27}\right) _{mn}v^{m}v^{n}\right) v_{a}v_{b}  \label{v27sol} \\
&&-\frac{1}{7}\left( 4\left( \tau _{7}\right) _{m}v^{m}-3\left( \tilde{\tau}%
_{7}\right) _{m}v^{m}-4M\left( 1+M\right) ^{-\frac{1}{3}}\tilde{\tau}_{1}-%
\frac{1}{2}\left( 1+M\right) ^{-\frac{1}{3}}\left( \tilde{\tau}_{27}\right)
_{mn}v^{m}v^{n}\right) g_{ab}  \notag \\
&&-3\tilde{\tau}_{7(a}v_{b)}-\varphi _{\ \ \ (a}^{mn}\left( \tilde{\tau}%
_{14}\right) _{b)n}v_{m}-\frac{1}{2}\left( 1+M\right) ^{-\frac{2}{3}}\varphi
_{a}^{\ \ mn}\varphi _{b}^{\ \ pq}\left( \tilde{\tau}_{27}\right)
_{mp}v_{n}v_{q}  \notag \\
&&-\left( 1+M\right) ^{-\frac{2}{3}}\left( \frac{1}{2}\left( 2+M\right)
\left( \tilde{\tau}_{27}\right) _{ab}+v_{m}\left( \tilde{\tau}_{27}\right)
_{\ \ (a}^{m}v_{b)}+v^{m}\varphi _{mn(a}\left( \tilde{\tau}_{27}\right) _{\
\ b)}^{n}\right.  \notag \\
&&\left. +\varphi _{\ \ \ \ \ (a}^{mn}v_{b)}v_{n}v_{p}\left( \tilde{\tau}%
_{27}\right) _{\ \ m}^{p}\right)  \notag
\end{eqnarray}%
\end{subequations}%
and moreover, the following necessary condition is satisfied: 
\begin{equation}
d\left( \left( v_{7}\right) \lrcorner \varphi +v_{14}\right) =0.
\label{dvcond}
\end{equation}
\end{theorem}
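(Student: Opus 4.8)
The plan is to obtain \eqref{v1sol}--\eqref{v27sol} by inverting, representation by representation, the linear system that expresses the deformed torsion components in terms of $\nabla v$. The four building blocks are already in hand: the full deformed torsion $\tilde T_{an}$ of Theorem \ref{ThmvTors} (equation \eqref{tantildeexp}), together with its projections \eqref{tau1tild}, \eqref{tau7tild}, \eqref{tTilde14} and \eqref{tTilde27} onto $\tilde W_1,\tilde W_7,\tilde W_{14},\tilde W_{27}$. Reading these with the decomposition \eqref{delvdecom} of $\nabla_a v_b$, each of $\tilde\tau_1,\tilde\tau_7,\tilde\tau_{14},\tilde\tau_{27}$ is \emph{linear} in the components $v_1,(v_7)^c,(v_{14})_{ab},(v_{27})_{ab}$, with coefficients assembled from $v$, the background torsion $\tau_i$, and the invariant forms $\varphi,\psi$. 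Since $\dim(\tilde W_1\oplus\tilde W_7\oplus\tilde W_{14}\oplus\tilde W_{27})=1+7+14+27=49$ equals the number of components of $\nabla v$, this is a square system at each point, and \eqref{tt0allsol} is nothing but its solution for $\nabla v$ in terms of the old data $\tau_i$ and the prescribed new data $\tilde\tau_i$. The invertibility is never in doubt because the scalar factors that appear, $1+M$ and $M+9$, are strictly positive for $M=|v|^2\ge 0$.

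First I would settle the $\mathbf 1$ and $\mathbf 7$ sectors, which couple to each other but to nothing else at leading order. Equation \eqref{tau1tild} is linear in $v_1$ and in the single contraction $(v_7)^a v_a$, while \eqref{tau7tild} returns $(v_7)^c$ together with the same trace pieces; solving these two jointly, and using the contraction identities \eqref{phiphi1}--\eqref{psipsi0} of Proposition \ref{propcontractions} to collapse every $\varphi$--$\psi$ product, yields \eqref{v1sol} and \eqref{v7sol}. The symmetric traceless sector is handled identically: starting from the symmetric part \eqref{tTildesym} of \eqref{tantildeexp} and removing the trace term $\tilde\tau_1\tilde g_{an}$ to reach \eqref{tTilde27}, one is left with a relation linear in $(v_{27})_{ab}$ (and in the already-determined lower pieces), which inverts to \eqref{v27sol}.

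The main obstacle is the $\mathbf{14}$ sector. The map sending $(v_{14})_{ab}$ to the $\Lambda^2_{14}$ part of \eqref{tTilde14} is \emph{not} a multiple of the identity: alongside the diagonal term $-(1+M)(v_{14})_{an}$ it carries contractions such as $v_m(v_{14})^m_{\ [a}v_{n]}$ and the $\psi$- and $\varphi$-contracted terms, all of which mix $v_{14}$ with itself through $v$. Inverting this endomorphism of $\Lambda^2_{14}$ is precisely what produces the overall prefactor $1/(M+9)$ and the two-index structure on the right of \eqref{v14sol}. I expect this to demand the most careful bookkeeping, with repeated use of \eqref{phiphi1}--\eqref{psipsi0} and of the defining property $\omega\lrcorner\varphi=0$ for $\omega\in\Lambda^2_{14}$ to reduce the nested contractions; once the endomorphism is diagonalised its inverse is explicit and substitution gives \eqref{v14sol}.

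Finally I would derive the necessary condition \eqref{dvcond}, which uses only \eqref{delvdecom}. Antisymmetrising $\nabla_a v_b=v_1 g_{ab}+(v_7)^c\varphi_{cab}+(v_{14})_{ab}+(v_{27})_{ab}$ in $a,b$ kills the symmetric pieces $g_{ab}$ and $(v_{27})_{ab}$ and leaves $\nabla_{[a}v_{b]}=(v_7)^c\varphi_{cab}+(v_{14})_{ab}=\big((v_7)\lrcorner\varphi+v_{14}\big)_{ab}$, so that $(v_7)\lrcorner\varphi+v_{14}=\tfrac12\,dv$; applying $d$ and invoking $d^2=0$ gives \eqref{dvcond} immediately. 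This is exactly the step that turns \eqref{tt0allsol}, read as a prescription for the full $\nabla v$ in terms of $v$ and the two torsions, into an overdetermined first-order system for $v$, of whose integrability \eqref{dvcond} is the closedness part. The converse implication is then automatic: substituting \eqref{v1sol}--\eqref{v27sol} back into \eqref{tau1tild}--\eqref{tTilde27} reproduces the prescribed $\tilde\tau_i$, so any $v$ satisfying all of \eqref{tt0allsol} together with \eqref{dvcond} does realise the stated deformed torsion $\tilde T_{an}$.
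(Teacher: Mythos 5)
Your high-level strategy coincides with the paper's: read \eqref{tau1tild}, \eqref{tau7tild}, \eqref{tTilde14}, \eqref{tTilde27} as a pointwise linear system in the $49$ components of $\nabla v$, invert it to obtain \eqref{tt0allsol}, and derive \eqref{dvcond} as the integrability condition $d^{2}v^{\flat }=0$ via $dv^{\flat }=2\left( \left( v_{7}\right) \lrcorner \varphi +v_{14}\right) $; your final paragraph is exactly the paper's closing step. The genuine gap is the claimed decoupling of representation sectors, on which your whole inversion scheme rests. It is false that the $\mathbf{1}$ and $\mathbf{7}$ sectors \textquotedblleft couple to each other but to nothing else\textquotedblright : the explicit formula \eqref{tau7tild} for $\left( \tilde{\tau}_{7}\right) _{c}$ contains the unknowns $v^{a}\left( v_{27}\right) _{ac}$ and $\varphi _{ca}^{\ \ b}v^{a}\left( v_{27}\right) _{bd}v^{d}$ in its last line. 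Hence your first subsystem has $8$ equations but strictly more than $8$ unknowns and cannot by itself yield \eqref{v1sol} and \eqref{v7sol}; worse, your proposed ordering ($\mathbf{7}$ before $\mathbf{27}$) is circular, since \eqref{tTilde27} in turn contains $v_{(a}\left( v_{7}\right) _{n)}$ and related $v_{7}$ terms. The same mixing occurs in the sectors you do treat: \eqref{tTilde27} involves $v_{27}$ not only through $\left( v_{27}\right) _{ab}$ but through inequivalent contractions such as $\varphi _{\ \ \ (a}^{mp}\left( v_{27}\right) _{n)p}v_{m}$, and \eqref{tTilde14} involves the skew contraction $\varphi _{\ \ \ [a}^{mp}\left( v_{27}\right) _{n]p}v_{m}$, so the \textquotedblleft $\mathbf{14}$ endomorphism\textquotedblright\ you describe is not a self-map of $\Lambda _{14}^{2}$ unless $v_{27}$ has already been fully and correctly determined --- which, given the failure of your first step, it has not.

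The paper's proof handles this mixing by enlarging the set of unknowns rather than assuming it away: it first solves the three scalar equations coming from $\tilde{\tau}_{1}$, $\left( \tilde{\tau}_{7}\right) ^{c}v_{c}$ and $\left( \tilde{\tau}_{27}\right) _{ab}v^{a}v^{b}$ for the scalars $v_{1}$, $\left( v_{7}\right) ^{a}v_{a}$, $\left( v_{27}\right) _{ab}v^{a}v^{b}$ (system \eqref{ttscal0eqs2}, whose determinant is proportional to $1+M$); then six vector equations \eqref{tt0vecteqs}, built by contracting $\tilde{\tau}_{14}$ and $\tilde{\tau}_{27}$ with $v$ and twisting with $\varphi $, for six vector unknowns including $\left( v_{7}\right) ^{c}$ and $v^{a}\left( v_{27}\right) _{a}^{\ \ c}$; then a $2\times 2$ skew system \eqref{tt0skeweqs} in $\left( v_{14}\right) _{ab}$ and $\varphi _{\ \ \ [a}^{cd}\left( v_{27}\right) _{b]d}v_{c}$; and finally a $3\times 3$ symmetric system \eqref{tt0symeqs}. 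Note also that your appeal to the positivity of $1+M$ and $M+9$ as making invertibility \textquotedblleft never in doubt\textquotedblright\ gets the logic backwards: those factors are the output of computing the determinants of these successive systems, not a premise available beforehand. To repair your argument you would need either to reproduce this cascade, or to solve the coupled $\mathbf{1}\oplus \mathbf{7}\oplus \mathbf{27}$ block simultaneously ($35$ equations in $35$ unknowns, which is consistent because $v_{14}$ is genuinely absent from \eqref{tau1tild}, \eqref{tau7tild} and \eqref{tTilde27}) before attacking the $\mathbf{14}$ sector, and in either case actually verify that the relevant determinants do not vanish.
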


\begin{proof}
In order to obtain the equations which the components $v_{1}$, $v_{7}$, $%
v_{14}$ and $v_{27}$ of $\nabla v$ must satisfy, we have to invert the
expressions for $\tilde{\tau}_{1},$ $\tilde{\tau}_{7},$ $\tilde{\tau}_{14}$
and $\tilde{\tau}_{27}$ that are given by (\ref{tau1tild}), (\ref{tau7tilda}%
), (\ref{tTilde14}) and (\ref{tTilde27}), respectively. So we solve for $%
v_{1}$, $\left( v_{7}\right) ^{c}$, $\left( v_{14}\right) _{ab}$ and $\left(
v_{27}\right) _{ab}$ in terms of the original torsion components $\tau _{1},$
$\tau _{7},$ $\tau _{14}$ and $\tau _{27}$ and the new torsion components $%
\tilde{\tau}_{1},$ $\tilde{\tau}_{7},$ $\tilde{\tau}_{14}$ and $\tilde{\tau}%
_{27}$. Therefore pointwise we have the same number of variables as
equations, so generically we should be able to solve it.

For convenience, let us denote the left hand sides of equations (\ref%
{tau1tild}), (\ref{tau7tilda}), (\ref{tTilde14}) and (\ref{tTilde27}), by $%
\hat{\tau}_{1}$,$\hat{\tau}_{7},$ $\hat{\tau}_{14}$ and $\hat{\tau}_{27}$,
respectively. Hence these equations can be rewritten as 
\begin{eqnarray*}
\tilde{\tau}_{1} &=&\hat{\tau}_{1} \\
\tilde{\tau}_{7} &=&\hat{\tau}_{7} \\
\tilde{\tau}_{14} &=&\hat{\tau}_{14} \\
\tilde{\tau}_{27} &=&\hat{\tau}_{27}
\end{eqnarray*}

Let us first look at the $\tilde{\tau}_{1}$ equation. Note that the
expression for $\tilde{\tau}_{1}$ contains the scalars $v_{1}$ and $\left(
v_{7}\right) ^{a}v_{a}$. So in order to find $v_{1}$, we would also need to
find $\left( v_{7}\right) ^{a}v_{a}$. We can get another equation that has $%
\left( v_{7}\right) ^{a}v_{a}$ by constructing the scalar $\left( \tilde{\tau%
}_{7}\right) ^{c}v_{c}.$ However that now also has the scalar $\left(
v_{27}\right) ^{ab}v_{a}v_{b}$. So we would need another equation - this
time from by $\left( \tilde{\tau}_{27}\right) _{ab}v^{a}v^{b}$. Now we can
solve the system 
\begin{subequations}
\label{ttscal0eqs2}
\begin{eqnarray}
\tilde{\tau}_{1} &=&\hat{\tau}_{1}=\left( 1+M\right) ^{-\frac{2}{3}}\left(
-v_{1}+\frac{3}{7}\left( v_{7}\right) ^{a}v_{a}+\left( 1+\frac{1}{7}M\right)
\tau _{1}-\frac{6}{7}\left( \tau _{7}\right) ^{a}v_{a}\right. \\
&&\left. +\frac{1}{7}\left( \tau _{27}\right) ^{ab}v_{a}v_{b}\right)  \notag
\\
\left( \tilde{\tau}_{7}\right) ^{c}v_{c} &=&\left( \hat{\tau}_{7}\right)
^{c}v_{c}=\left( 1+M\right) ^{-1}\left( -\frac{4}{3}Mv_{1}-\left(
v_{7}\right) ^{a}v_{a}-\frac{1}{6}\left( v_{27}\right) ^{ab}v_{a}v_{b}+M\tau
_{1}\right. \\
&&\left. +\left( \tau _{7}\right) ^{a}v_{a}-\frac{1}{6}\left( \tau
_{27}\right) ^{ab}v_{a}v_{b}\right)  \notag \\
\left( \tilde{\tau}_{27}\right) _{ab}v^{a}v^{b} &=&\left( \hat{\tau}%
_{27}\right) _{ab}v^{a}v^{b}=\left( 1+M\right) ^{-\frac{4}{3}}\left( \frac{18%
}{7}M\left( v_{7}\right) ^{a}v_{a}-\left( v_{27}\right) ^{ab}v_{a}v_{b}+%
\frac{6}{7}M^{2}\tau _{1}\right. \\
&&\left. +\frac{6}{7}M\left( \tau _{7}\right) ^{a}v_{a}+\left( 1+\frac{6}{7}%
M\right) \left( \tau _{27}\right) ^{ab}v_{a}v_{b}\right)  \notag
\end{eqnarray}%
We have three equations, and the three variables, $v_{1}$, $\left(
v_{7}\right) ^{a}v_{a}$ and $\left( v_{27}\right) _{ab}v^{a}v^{b}$, which we
treat as being independent. The determinant of this system is proportional
to $\left( M+1\right) $, but $M=\left\vert v\right\vert ^{2}>0,$ so we
always have a solution. Solving, we get the solution (\ref{v1sol}) for $%
v_{1} $ and also solutions for $\left( v_{7}\right) ^{a}v_{a}$ and $\left(
v_{27}\right) _{ab}v^{a}v^{b}$.

Note that we could have also considered $\left( \tilde{\tau}_{27}\right)
_{ab}g^{ab}$. However, since $\tilde{\tau}_{27}$ is traceless with respect
to $\tilde{g}^{ab}=\left( 1+M\right) ^{-\frac{2}{3}}\left(
g^{ab}+v^{a}v^{b}\right) $, 
\end{subequations}
\begin{equation*}
\left( \tilde{\tau}_{27}\right) _{ab}g^{ab}=-\left( \tilde{\tau}_{27}\right)
_{ab}v^{a}v^{b}
\end{equation*}%
so we would get no new independent equation.

Next, we look at the $\tilde{\tau}_{7}$ equation. We now have expressions
for $\left( v_{7}\right) ^{a}v_{a}$ and $\left( v_{27}\right)
^{ab}v_{a}v_{b} $, so we can replace any instances of these scalars by the
solutions of the above scalar equations. Our remaining variables are now $%
\left( v_{7}\right) ^{c},\left( v_{14}\right) _{\ a}^{\ \ \ c}v^{a},\left(
v_{27}\right) _{a}^{\ \ \ c}v^{a},\ \varphi _{\ \ ab}^{c}\left( v_{7}\right)
^{a}v^{b},\varphi _{\ \ ab}^{c}\left( v_{14}\right) ^{eb}v^{a}v_{e}$ and $%
\varphi _{\ \ ab}^{c}\left( v_{27}\right) ^{be}v^{a}v_{e}$. To solve for
these variables, we construct six equations 
\begin{subequations}
\label{tt0vecteqs}
\begin{eqnarray}
&&\left( \tilde{\tau}_{7}\right) _{a}=\left( \hat{\tau}\right) _{a} \\
&&\left( \tilde{\tau}_{14}\right) _{ab}v^{a}=\left( \hat{\tau}_{14}\right)
_{ab}v^{a} \\
&&\left( \tilde{\tau}_{27}\right) _{ab}v^{a}=\left( \hat{\tau}_{27}\right)
_{ab}v^{a} \\
&&\varphi _{\ \ bc}^{a}\left( \tilde{\tau}_{7}\right) ^{b}v^{c}=\varphi _{\
\ bc}^{a}\left( \hat{\tau}_{7}\right) ^{b}v^{c} \\
&&\varphi _{\ \ bc}^{a}\left( \tilde{\tau}_{14}\right) _{\ \
d}^{b}v^{d}v^{c}=\varphi _{\ \ bc}^{a}\left( \hat{\tau}_{14}\right) _{\ \
d}^{b}v^{d}v^{c} \\
&&\varphi _{\ \ bc}^{a}\left( \tilde{\tau}_{27}\right) _{\ \
d}^{b}v^{d}v^{c}=\varphi _{\ \ bc}^{a}\left( \hat{\tau}_{27}\right) _{\ \
d}^{b}v^{d}v^{c}
\end{eqnarray}%
The left hand side of each of these equations is now some function of $v,$ $%
\tau _{1},$ $\tau _{7},$ $\tau _{14}$ and $\tau _{27}$ constructed from the
expressions for $\tilde{\tau}_{1},$ $\tilde{\tau}_{7},$ $\tilde{\tau}_{14}$
and $\tilde{\tau}_{27}$ and with any instances of $v_{1}$,$\left(
v_{7}\right) ^{a}v_{a}$ and $\left( v_{27}\right) ^{ab}v_{a}v_{b}$ replaced
by the solutions of equations (\ref{ttscal0eqs2}). It turns out that we do
not get any new variables, and so we get six equations for six variables.
The determinant of this system is positive, so we can solve this, and in
particular, get the solution for $\left( v_{7}\right) ^{c}$ (\ref{v7sol}).
We also get solutions for the other vectors constructed above.

Now we can look at the last two equation - $\left( \tilde{\tau}_{14}\right)
_{ab}=0$ and $\left( \tilde{\tau}_{27}\right) _{ab}=0$. We now have
solutions for scalars and vectors, so we can substitute them into these
equations. Then, the variables in the first equation are skew-symmetric
quantities, and in the second equation we have symmetric quantities.

In the $\tilde{\tau}_{14}$ equation the quantities are $\left( v_{14}\right)
_{ab}$ and $\varphi _{\ \ \ [a}^{cd}\left( v_{27}\right) _{b]d}v_{c}$, while
in the $\tilde{\tau}_{27}$ equation we have $\left( v_{27}\right) _{ab}$ and 
$\varphi _{\ \ \ (a}^{cd}\left( v_{27}\right) _{b)d}v_{c}.$ Hence we can
construct quantities $\varphi _{\ \ \ [a}^{cd}\left( \tilde{\tau}%
_{27}\right) _{b]d}v_{c}$ and $\varphi _{\ \ \ (a}^{cd}\left( \tilde{\tau}%
_{27}\right) _{b)d}v_{c}$ which give us one extra equation for both
skew-symmetric and symmetric quantities. For the skew-symmetric equations we
get no new variables, thus our equations are 
\end{subequations}
\begin{subequations}
\label{tt0skeweqs}
\begin{eqnarray}
&&\left( \tilde{\tau}_{14}\right) _{ab}=\left( \hat{\tau}_{14}\right) _{ab}
\\
&&\varphi _{\ \ \ [a}^{cd}\left( \tilde{\tau}_{27}\right)
_{b]d}v_{c}=\varphi _{\ \ \ [a}^{cd}\left( \hat{\tau}_{27}\right) _{b]d}v_{c}
\end{eqnarray}%
Here we solve for $\left( v_{14}\right) _{ab}$ and $\varphi _{\ \ \
[a}^{cd}\left( v_{27}\right) _{b]d}v_{c},$ and immediately get the solution (%
\ref{v14sol}). It can be checked that this expression does indeed give a $2$%
-form lying in $\Lambda _{14}^{2}$.

Going back to the symmetric equations, from $\varphi _{\ \ \ (a}^{cd}\left( 
\tilde{\tau}_{27}\right) _{b)d}v_{c}$ we get a new symmetric variable $%
\varphi _{a}^{\ \ cd}\varphi _{b}^{\ \ \ ef}v_{c}v_{e}\left( v_{27}\right)
_{df}$. We then construct the quantity $\varphi _{a}^{\ \ cd}\varphi _{b}^{\
\ \ ef}v_{c}v_{e}\left( \tilde{\tau}_{27}\right) _{df}$ and get no new
variables. Therefore, the symmetric equations are 
\end{subequations}
\begin{subequations}
\label{tt0symeqs}
\begin{eqnarray}
&&\left( \tilde{\tau}_{27}\right) _{ab}=\left( \hat{\tau}_{27}\right) _{ab}
\\
&&\varphi _{\ \ \ (a}^{cd}\left( \tilde{\tau}_{27}\right)
_{b)d}v_{c}=\varphi _{\ \ \ (a}^{cd}\left( \hat{\tau}_{27}\right) _{b)d}v_{c}
\\
&&\varphi _{a}^{\ \ cd}\varphi _{b}^{\ \ \ ef}v_{c}v_{e}\left( \tilde{\tau}%
_{27}\right) _{df}=\varphi _{a}^{\ \ cd}\varphi _{b}^{\ \ \
ef}v_{c}v_{e}\left( \hat{\tau}_{27}\right) _{df}
\end{eqnarray}%
where we solve for $\left( v_{27}\right) _{ab}$, $\varphi _{\ \ \
(a}^{cd}\left( v_{27}\right) _{b)d}v_{c}$ and $\varphi _{a}^{\ \ cd}\varphi
_{b}^{\ \ \ ef}v_{c}v_{e}\left( v_{27}\right) _{df}$. We have three
equations with three variable, and the determinant is again positive, so we
solve it and get the solution (\ref{v27sol}) for $\left( v_{27}\right) _{ab}$%
. Note that it is always traceless, hence indeed always corresponds to the
component in the $27$-dimensional representation.

To get the necessary condition (\ref{dvcond}), first note that 
\end{subequations}
\begin{equation}
dv^{\flat }=2\left( v_{7}\right) \lrcorner \varphi +2v_{14}.  \label{dv1}
\end{equation}%
Therefore, we must have 
\begin{equation*}
d^{2}v^{\flat }=0\text{,}
\end{equation*}%
which gives us (\ref{dvcond}). So far, we have only considered the algebraic
constraints on the components of $\nabla v$, so the differential condition (%
\ref{dvcond}) is not automatically satisfied in general, and must be imposed
separately.
\end{proof}

Note that the condition (\ref{dvcond}) in Theorem \ref{ThmFullDeform}
involves second derivatives of $v$ - in particular, derivatives of the $%
\mathbf{7}$ and the $\mathbf{14}$ components of $\nabla v$, that is, $v_{7}$
and $v_{14}$. However, from the equations (\ref{v7sol}) and (\ref{v14sol}), $%
v_{7}$ and $v_{14}$ are expressed in terms of $v$ and the torsion
components. However, derivatives of $v$ can be reduced again to expressions
just involving $v$ and the torsion components, using all of the equations (%
\ref{v1sol}) to (\ref{v27sol}). So overall, (\ref{dvcond}) gives a
relationship between $v$, the torsion components and the derivatives of the
torsion components. Moreover, we can also apply the conditions on the
derivatives of torsion components from Proposition \ref{PropTorsConds} in
order to relate some of the torsion derivatives to the torsion components
themselves. In the general case, the resulting expressions are extremely
long, and not very helpful, so we will consider individual torsion classes
in order to gain more insight.

The simplest case is when the original torsion vanishes.

\begin{corollary}
\label{corrt0t0}Suppose the $3$-form $\varphi $ defines a torsion-free $%
G_{2}\,$-structure, then a deformation of $\varphi $ which lies in $\Lambda
_{7}^{3}$ and is given by $\varphi \longrightarrow \varphi +v^{e}\psi
_{bcde}^{\ \ \ \ \ \ }$ results in a new torsion-free $G_{2}$-structure $%
\tilde{\varphi}$ if and only if 
\begin{equation*}
\nabla v=0
\end{equation*}
\end{corollary}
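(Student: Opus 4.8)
The plan is to obtain the corollary as the specialization of Theorem \ref{ThmFullDeform} to the case where both the source and target $G_2$-structures are torsion-free. I would first recall the Fern\'{a}ndez--Gray criterion stated earlier: a $G_2$-structure is torsion-free exactly when its full torsion tensor vanishes, i.e.\ when all four components $\tau_1,\tau_7,\tau_{14},\tau_{27}$ are zero; the identical statement holds for $\tilde\varphi$ with components $\tilde\tau_1,\tilde\tau_7,\tilde\tau_{14},\tilde\tau_{27}$.

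For the ``only if'' direction I assume $\varphi$ and $\tilde\varphi$ are both torsion-free, so that $\tau_i=0$ and $\tilde\tau_i=0$ for every $i$, and I substitute these vanishing values into the inversion formulas (\ref{v1sol})--(\ref{v27sol}) of Theorem \ref{ThmFullDeform}. The decisive point is structural rather than computational: every summand appearing on the right-hand side of each of (\ref{v1sol})--(\ref{v27sol}) carries at least one factor drawn from the list $\tau_1,\tau_7,\tau_{14},\tau_{27},\tilde\tau_1,\tilde\tau_7,\tilde\tau_{14},\tilde\tau_{27}$. Hence all four right-hand sides collapse to zero, yielding
\begin{equation*}
v_1=0,\qquad (v_7)^c=0,\qquad (v_{14})_{ab}=0,\qquad (v_{27})_{ab}=0.
\end{equation*}
Feeding these into the $G_2$-irreducible decomposition (\ref{delvdecom}) of $\nabla_a v_b$ gives $\nabla v=0$, as claimed. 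The differential constraint (\ref{dvcond}) is then automatically satisfied, since its integrand $(v_7)\lrcorner\varphi+v_{14}$ vanishes identically.

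For the converse I assume $\nabla v=0$, so that $v_1=v_7=v_{14}=v_{27}=0$, and combine this with the hypothesis $\tau_i=0$ that $\varphi$ is torsion-free. I would then read off the new torsion components from (\ref{tau1tild}), (\ref{tau7tild}), (\ref{tTilde14}) and (\ref{tTilde27}): since in each of these expressions every term contains a factor of some $\tau_i$ or some component of $\nabla v$, all of $\tilde\tau_1,\tilde\tau_7,\tilde\tau_{14},\tilde\tau_{27}$ reduce to zero, so $\tilde T=0$ and $\tilde\varphi$ is torsion-free. Equivalently, one may argue directly from the master formula (\ref{tantildeexp}), each of whose summands is proportional either to an original torsion component or to a component of $\nabla v$.

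The argument is essentially bookkeeping once Theorem \ref{ThmFullDeform} and the component expressions are in hand. The one step meriting genuine care---and the only place where the equivalence could fail---is verifying the ``no pure-$v$ term'' claim: that none of the right-hand sides of (\ref{v1sol})--(\ref{v27sol}), nor any summand of (\ref{tantildeexp}), is built solely from $v$ and the metric with no torsion factor at all. I would check this term by term, since a single such rogue term (with coefficient nonvanishing for some $v\neq 0$) would obstruct the collapse to $\nabla v=0$ and break the corollary.
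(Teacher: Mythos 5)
Your proposal is correct and follows essentially the same route as the paper: the paper's own proof simply sets all torsion components $\tau_i$ and $\tilde\tau_i$ to zero in equations (\ref{v1sol})--(\ref{v27sol}) of Theorem \ref{ThmFullDeform}, observes that every term carries a torsion factor so the system collapses to $\nabla v=0$, and notes that (\ref{dvcond}) is then automatic. Your extra verification of the converse via (\ref{tau1tild}), (\ref{tau7tild}), (\ref{tTilde14}) and (\ref{tTilde27}) is sound but redundant, since Theorem \ref{ThmFullDeform} is already stated as an equivalence.
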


\begin{proof}
We get this immediately by setting $\tau _{1}=\tau _{7}=\tau _{14}=\tau
_{27}=0$ in (\ref{v1sol}) to (\ref{v27sol}) in Theorem \ref{ThmFullDeform}.
The condition (\ref{dvcond}) is then automatically satisfied.
\end{proof}

This is equivalent to saying that $d\chi =0$ and $d\ast \chi =0$ for $\chi
=v^{e}\psi _{bcde}^{\ \ \ \ \ \ }$. This is however exactly the same
condition as the one for an infinitesimal deformation.

\begin{theorem}
\label{Thmt17tt0}Suppose $\left( \varphi ,g\right) $ is a $G_{2}$-structure
on a closed, compact manifold $M$. Consider a deformation of the $G_{2}$%
-structure $\varphi $ given by 
\begin{equation}
\varphi \longrightarrow \varphi +v^{e}\psi _{bcde}^{\ \ \ \ \ \ }
\label{lambda7deform}
\end{equation}%
If the torsion $T$ lies in the class $W_{1}\oplus W_{7}$, then this
deformation results in a torsion-free $G_{2}$-structure if and only if $T=0$
and $\nabla v=0.$
\end{theorem}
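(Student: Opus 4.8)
The plan is to prove both implications, but to dispatch the forward (``if'') direction quickly and then concentrate all the work on the reverse direction, where compactness of $M$ is essential. For the ``if'' direction, if $T=0$ then $\varphi$ is itself torsion-free, and a deformation with $\nabla v=0$ keeps it torsion-free by Corollary~\ref{corrt0t0}. It is worth recording at the outset a clean equivalence that will organize the reverse direction. Since the deformation is invertible (Lemma~\ref{vtildelem}), the passage $\tilde\varphi\longrightarrow\tilde\varphi+\tilde v^{e}\tilde\psi_{bcde}=\varphi$, with $\tilde v=-(1+M)^{-2/3}v$, is a $\Lambda_{7}^{3}$ deformation of the \emph{torsion-free} structure $\tilde\varphi$. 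Applying Corollary~\ref{corrt0t0} to this inverse deformation shows that $\varphi$ is torsion-free if and only if $\tilde\nabla\tilde v=0$, which by Lemma~\ref{vtildelem}(3) holds if and only if $\nabla v=0$. Because $T\in W_{1}\oplus W_{7}$, the structure $\varphi$ is torsion-free exactly when $T=0$. Hence, under the standing hypothesis that $\tilde\varphi$ is torsion-free, we have $T=0\iff\nabla v=0$, and it suffices to prove either one.

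To rule out nonzero torsion I would first specialize the inversion formulas (\ref{v1sol})--(\ref{v27sol}) of Theorem~\ref{ThmFullDeform} by setting the new components $\tilde\tau_{1}=\tilde\tau_{7}=\tilde\tau_{14}=\tilde\tau_{27}=0$ and, since $T\in W_{1}\oplus W_{7}$, also $\tau_{14}=\tau_{27}=0$. This collapses the four equations into explicit algebraic expressions for the pieces $v_{1},v_{7},v_{14},v_{27}$ of $\nabla v$ in (\ref{delvdecom}), in terms of $\tau_{1}$, $\tau_{7}$ and $v$ only. In particular the trace gives $\func{div}v=7v_{1}=7\tau_{1}-3(\tau_{7})_{a}v^{a}$, while the traceless symmetric part is $(v_{27})_{ab}=4(\tau_{7})_{(a}v_{b)}-\tfrac{4}{7}(\tau_{7})_{m}v^{m}g_{ab}$, whose pointwise norm is the manifestly non-negative $|v_{27}|^{2}=8|v|^{2}|\tau_{7}|^{2}+\tfrac{40}{7}\big((\tau_{7})_{m}v^{m}\big)^{2}$. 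I would then invoke the differential constraints that $W_{1}\oplus W_{7}$ forces through Proposition~\ref{PropTorsConds}: with $\tau_{14}=\tau_{27}=0$ these reduce to $d\tau_{7}=0$, so that $\nabla(\tau_{7})$ is symmetric, together with $\nabla_{a}\tau_{1}=\tau_{1}(\tau_{7})_{a}$.

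The closing step is integration over the closed manifold $M$. The natural organizing identity comes from Corollary~\ref{corrrict1t7}: tracing (\ref{riccit1t7}) gives $R_{g}=12\,\func{div}(\tau_{7})+30|\tau_{7}|^{2}+42\tau_{1}^{2}$, so that $\int_{M}R_{g}\,\mathrm{dvol}=\int_{M}\big(30|\tau_{7}|^{2}+42\tau_{1}^{2}\big)\,\mathrm{dvol}\ge 0$, with equality if and only if $T=0$. I would combine this with the vanishing of integrals of divergences built from the explicit $\nabla v$ above (for instance $\int_{M}\func{div}v=0$, $\int_{M}\func{div}(\tau_{1}v)=0$ and $\int_{M}\func{div}(|v|^{2}v)=0$) and with the integrated Bochner--Weitzenb\"ock identity for $v^{\flat}$, in which the Ricci term is substituted from (\ref{riccit1t7}). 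The goal is to produce a single integrated identity equating the non-negative quantity $\int_{M}(30|\tau_{7}|^{2}+42\tau_{1}^{2})$ to an expression that vanishes by torsion-freeness of $\tilde\varphi$, forcing $\tau_{1}\equiv\tau_{7}\equiv 0$, hence $T=0$ and, by the equivalence above, $\nabla v=0$. The necessary condition (\ref{dvcond}), encoding $d^{2}v^{\flat}=0$, enters here to rewrite the second-derivative terms of $v$ that appear in these integrals back in terms of torsion data.

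I expect the main obstacle to be precisely this final assembly. The divergence and Weitzenb\"ock computations generate many terms containing $\nabla\tau_{7}$ — both from the Ricci expression and from differentiating $(\tau_{7})_{m}v^{m}$ — and showing that these cancel after integration by parts, leaving only a sign-definite combination of $\tau_{1}^{2}$, $|\tau_{7}|^{2}$, $\big((\tau_{7})_{m}v^{m}\big)^{2}$ and $|v_{27}|^{2}$, will require systematic use of the contraction identities of Proposition~\ref{propcontractions} alongside the two constraints $d\tau_{7}=0$ and $\nabla_{a}\tau_{1}=\tau_{1}(\tau_{7})_{a}$. A secondary subtlety is that the $W_{1}\oplus W_{7}$ characterization $\tau_{7}=d(\log\tau_{1})$ degenerates on the zero set of $\tau_{1}$, so the locus $\{\tau_{1}\neq 0\}$ and its complement should be treated separately before the global conclusion is drawn.
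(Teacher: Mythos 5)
Your forward direction is fine, and your opening reduction is a genuinely nice observation: using the invertibility of the deformation (Lemma \ref{vtildelem}) together with Corollary \ref{corrt0t0} applied to the inverse deformation of the torsion-free structure $\tilde{\varphi}$, one gets that, under the standing hypothesis $\tilde{T}=0$, the conditions $T=0$ and $\nabla v=0$ are equivalent, so it suffices to prove $T=0$. The paper does not state this, and it would streamline its proof. Your algebraic setup is also sound: specializing (\ref{v1sol})--(\ref{v27sol}) with $\tilde{\tau}_{i}=0$ and $\tau_{14}=\tau_{27}=0$ does give $\func{div}v=7\tau_{1}-3\left( \tau _{7}\right) _{a}v^{a}$ and $\left( v_{27}\right) _{ab}=4\left( \tau _{7}\right) _{(a}v_{b)}-\tfrac{4}{7}\left( \tau _{7}\right) _{m}v^{m}g_{ab}$, and Proposition \ref{PropTorsConds} does reduce in this class to $d\tau _{7}=0$ and $\nabla _{a}\tau _{1}=\tau _{1}\left( \tau _{7}\right) _{a}$. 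The problem is everything after that: the entire content of the theorem is the derivation of a contradiction from $T\neq 0$, and exactly that step is left as a ``goal'' which you yourself flag as the main obstacle. Moreover, the proposed assembly -- forcing $\int_{M}R_{g}\,\mathrm{vol}=\int_{M}\left( 30\left\vert \tau _{7}\right\vert ^{2}+42\tau _{1}^{2}\right) \mathrm{vol}$ to vanish via divergence identities and the Bochner formula for $v^{\flat }$ -- has no evident mechanism to close. Those identities relate many distinct unknown integrals ($\int \left\langle \tau _{7},v\right\rangle ^{2}$, $\int M\left\vert \tau _{7}\right\vert ^{2}$, $\int \left( \nabla _{a}\left( \tau _{7}\right) _{b}\right) v^{a}v^{b}$, etc.) whose coefficients are rational functions of $M=\left\vert v\right\vert ^{2}$ coming from the $\left( M+9\right) ^{-1}$-type factors in $\nabla v$, and nothing in the plan forces them to recombine into a sign-definite statement. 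Note also that no purely integral argument can be the whole story: for the sub-classes $W_{1}$ and $W_{7}$ the conclusion is actually pointwise and requires no compactness at all.

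What the paper does, and what your proposal is missing, is the systematic exploitation of the integrability condition (\ref{dvcond}), i.e.\ $d\left( dv^{\flat }\right) =0$, decomposed into its $\Lambda _{1}^{3}$, $\Lambda _{7}^{3}$ and $\Lambda _{27}^{3}$ pieces and further contracted with $v$ and $\varphi $. These projections give strong \emph{pointwise} algebraic relations, treated in three sub-cases. For $T\in W_{1}$ they force $\tau _{1}=0$ immediately; for $T\in W_{7}$ they produce two incompatible expressions for $\left\vert \tau _{7}\right\vert ^{2}$ (equations (\ref{t7t0tau7sq1}) and (\ref{t7t0tau7sq2})), forcing $\tau _{7}=0$; and in the strict mixed case they yield $v=\tfrac{3}{\tau _{1}}\tau _{7}$ together with the explicit formula $\nabla \tau _{7}=\left( \tfrac{1}{3}\tau _{1}^{2}-\left\vert \tau _{7}\right\vert ^{2}\right) g+5\tau _{7}\otimes \tau _{7}$. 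Only then, taking the trace and substituting the Cleyton--Ivanov relation $\tau _{7}=d\left( \log \tau _{1}\right) $, does one obtain the single scalar equation $\nabla ^{2}F=\tfrac{14}{3}F^{2}$ for $F=\tau _{1}^{2}\geq 0$; compactness enters only here, via $\int_{M}F\nabla ^{2}F\,\mathrm{vol}=-\int_{M}\left\vert \nabla F\right\vert ^{2}\mathrm{vol}=\tfrac{14}{3}\int_{M}F^{3}\mathrm{vol}$, whose two sides have opposite signs, giving $F\equiv 0$. Your scalar-curvature/Bochner route would at best have to reproduce these pointwise relations anyway (in particular the proportionality $v\propto \tau _{7}$ and the formula for $\nabla \tau _{7}$), since without them the $\nabla \tau _{7}$-terms cannot be eliminated from your integral identities. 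As written, the proposal therefore has a genuine gap precisely at the theorem's core.
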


\begin{proof}
If $T=0$, from Corollary \ref{corrt0t0}, we know that the deformation (\ref%
{lambda7deform}) results in a torsion-free $G_{2}$-structure if and only if $%
\nabla v=0$. So assume now $T\neq 0$.

Now let us assume that $T\in W_{1}\subset $ $W_{1}\oplus W_{7}$ and suppose
the deformation (\ref{lambda7deform}) results in $\tilde{T}=0$. Thus here we
have $\tau _{7}=\tau _{14}=\tau _{27}=0$ and $\tilde{\tau}_{1}=\tilde{\tau}%
_{7}=\tilde{\tau}_{14}=\tilde{\tau}_{27}=0$. Then from Theorem \ref%
{ThmFullDeform}, we have 
\begin{equation}
\nabla _{a}v_{b}=\tau _{1}g_{ab}-\frac{1}{3}\tau _{1}v^{c}\varphi _{cab}
\label{delvt1t0}
\end{equation}%
and in particular, 
\begin{equation*}
dv^{\flat }=-\frac{2}{3}\tau _{1}v\lrcorner \varphi .
\end{equation*}%
Now, the consistency condition $d^{2}v^{\flat }=0$ is equivalent to either $%
\tau _{1}=0$ or 
\begin{equation*}
d\left( v\lrcorner \varphi \right) =0.
\end{equation*}%
Using (\ref{delvt1t0}) and the fact that 
\begin{equation*}
\nabla \varphi =\tau _{1}\psi ,
\end{equation*}%
we find that 
\begin{equation*}
\pi _{1}\left( d\left( v\lrcorner \varphi \right) \right) =3\tau _{1}\varphi
=0
\end{equation*}%
So we must have $\tau _{1}=0$, which gives a contradiction. Hence there are
no deformations from torsion class $W_{1}$ to $W_{0}$.

Next we assume that $T\in W_{7}\subset $ $W_{1}\oplus W_{7}$, so that only $%
\tau _{7}$ is non-vanishing. In this case, 
\begin{eqnarray}
\nabla _{a}v_{b} &=&\left( M+9\right) ^{-1}\left( -g_{ab}\left( \tau
_{7}\right) _{c}v^{c}+3\left( 1+M\right) v_{b}\left( \tau _{7}\right)
_{a}+\left( 33+M\right) v_{a}\left( \tau _{7}\right) _{b}\right.
\label{delvt7t0} \\
&&-3\varphi _{\ ab}^{c}\left( \tau _{7}\right) _{c}\left( M-3\right)
+4\left( \tau _{7}\right) _{c}v^{c}\varphi _{abd}v^{d}-24\varphi _{\ \
d[a}^{c}\left( \tau _{7}\right) _{c}v^{d}v_{b]}  \notag \\
&&\left. +12\psi _{\ \ dab}^{c}\left( \tau _{7}\right) _{c}v^{d}\right) 
\notag
\end{eqnarray}%
and correspondingly we can also get $dv^{\flat }$ from this. As before, we
consider $d\left( dv^{\flat }\right) $ and the projections of it on to $%
\Lambda _{1}^{3}$, $\Lambda _{7}^{3}$ and $\Lambda _{27}^{3}$. Let $\xi _{1}$
be the scalar corresponding to the $\Lambda _{1}^{3}$ projection, $\xi _{7}$
- the vector corresponding to the $\Lambda _{7}^{3}$ projection and $\xi
_{27}$ - the symmetric $2$-tensor corresponding to the $\Lambda _{27}^{3}$
component. As before, we can obtain scalars $\left( \xi _{7}\right)
_{a}v^{a} $ and $\left( \xi _{27}\right) _{ab}v^{a}v^{b}.$ Hence we get
three scalar equations%
\begin{eqnarray}
0 &=&16\left( M-15\right) \left( \left( \tau _{7}\right) _{a}v^{a}\right)
^{2}-6\left( 3M^{2}-34M+27\right) \left( \tau _{7}\right) ^{a}\left( \tau
_{7}\right) _{a} \\
&&-\left( M+9\right) ^{2}\nabla ^{a}\left( \tau _{7}\right) _{a}  \notag \\
0 &=&\frac{4\left( M-3\right) \left( \left( \tau _{7}\right)
_{a}v^{a}\right) ^{2}}{M+9}-\frac{9M\left( M+3\right) \left( \tau
_{7}\right) ^{a}\left( \tau _{7}\right) _{a}}{M+9}+\left( \nabla _{a}\left(
\tau _{7}\right) _{b}\right) v^{a}v^{b} \\
&&-M\nabla ^{a}\left( \tau _{7}\right) _{a}  \notag \\
0 &=&\frac{6\left( 1+M\right) \left( M-39\right) \left( \left( \tau
_{7}\right) _{a}v^{a}\right) ^{2}}{M+9}-\frac{4M\left( 5M-3\right) \left(
M-3\right) \left( \tau _{7}\right) ^{a}\left( \tau _{7}\right) _{a}}{M+9} \\
&&+2\left( M-3\right) \left( \nabla _{a}\left( \tau _{7}\right) _{b}\right)
v^{a}v^{b}-3M\left( 1+M\right) \nabla ^{a}\left( \tau _{7}\right) _{a} 
\notag
\end{eqnarray}%
We can solve these equations to get $\nabla ^{a}\left( \tau _{7}\right) _{a}$%
, $\left( \nabla _{a}\left( \tau _{7}\right) _{b}\right) v^{a}v^{b}$ and $%
\left( \tau _{7}\right) ^{a}\left( \tau _{7}\right) _{a}=\left\vert \tau
_{7}\right\vert ^{2}$ in terms of $\left( \left( \tau _{7}\right)
_{a}v^{a}\right) ^{2}=\left\langle \tau _{7},v\right\rangle ^{2}$. So in
particular, we get 
\begin{equation}
\left\vert \tau _{7}\right\vert ^{2}=\frac{3\left\langle \tau
_{7},v\right\rangle ^{2}\left( 3M^{2}-10M+51\right) }{\left(
7M^{2}-66M-9\right) M}  \label{t7t0tau7sq1}
\end{equation}%
Further, from $\xi _{7}^{d}=0$, $\varphi _{abc}v^{b}\xi _{7}^{c}=0,$ $\left(
\xi _{27}\right) _{mn}v^{n}=0$ and $\varphi _{abc}$ $\left( \xi _{27}\right)
_{\ \ n}^{b}v^{n}v^{c}=0$, we actually find that 
\begin{equation}
v=\frac{M}{\left\langle \tau _{7},v\right\rangle }\tau _{7}
\end{equation}%
and after contracting with $\tau _{7}$, we get 
\begin{equation}
\left\vert \tau _{7}\right\vert ^{2}=\frac{\left\langle \tau
_{7},v\right\rangle ^{2}}{M}  \label{t7t0tau7sq2}
\end{equation}%
Comparing (\ref{t7t0tau7sq1}) and (\ref{t7t0tau7sq2}), we get 
\begin{equation*}
\left\langle \tau _{7},v\right\rangle ^{2}\left( M+9\right) ^{2}=0
\end{equation*}%
Hence $\left\langle \tau _{7},v\right\rangle =0$ and so must have $\tau
_{7}=0$. Therefore, there are no deformations from $W_{7}$ to $W_{0}$.

Finally, suppose $T_{ab}$ lies in the strict class $W_{1}\oplus W_{7}$, so
that the $W_{1}$ component of the torsion is $\tau _{1}$ and the $W_{7}$
component is $\tau _{7}$. In this case, from Theorem \ref{ThmFullDeform}, we
have%
\begin{eqnarray}
\nabla _{a}v_{b} &=&\left( \tau _{1}-\left( \tau _{7}\right)
_{c}v^{c}\right) g_{ab}+\frac{1}{\left( M+9\right) }\left( -3\left(
M-3\right) \left( \tau _{7}\right) _{c}\varphi _{\ \ ab}^{c}\right.
\label{fulldelvt17tt0} \\
&&-\left( M+33\right) v_{a}\left( \tau _{7}\right) _{b}+3\left( 1+M\right)
\left( \tau _{7}\right) _{a}v_{b}  \notag \\
&&-\frac{1}{3}v^{c}\varphi _{cab}\left( 9\tau _{1}+\tau _{1}M-12\left( \tau
_{7}\right) _{d}v^{d}\right)  \notag \\
&&\left. +12v_{a}\varphi _{\ \ \ b}^{cd}\left( \tau _{7}\right)
_{c}v_{d}-12v_{b}\varphi _{\ \ \ a}^{cd}\left( \tau _{7}\right)
_{c}v_{d}+12\left( \tau _{7}\right) _{c}v_{d}\psi _{\ \ \ ab}^{cd}\right) 
\notag
\end{eqnarray}%
Following the general procedure outlined above, we used \emph{Maple }to
expand the necessary condition (\ref{dvcond}). Again, as before, we consider
the projections of $d\left( dv^{\flat }\right) $. As outlined above we first
consider the $\pi _{1}$, $\pi _{7}$ and $\pi _{27}$ projections of $d\left(
\left( v_{7}\right) \lrcorner \varphi +v_{14}\right) $.

Denote by $\xi _{1}$ the scalar corresponding to the $\Lambda _{1}^{3}$
component, let $\xi _{7}$ and $\xi _{27}$ be the vector and symmetric tensor
components. Then by considering the equations $\xi _{1}=0$, $\left( \xi
_{7}\right) ^{a}v_{a}=0$ and $\left( \xi _{27}\right) _{mn}v^{m}v^{n}=0$, we
can express $\left( \nabla _{a}\left( \tau _{7}\right) _{b}\right)
v^{a}v^{b} $, $\nabla ^{a}\left( \tau _{7}\right) _{a}$ and $\left\vert \tau
_{7}\right\vert ^{2}$ in terms of $M,$ $\tau _{1}$ and $\left\langle \tau
_{7},v\right\rangle $. In particular, we find that 
\begin{eqnarray}
\left\vert \tau _{7}\right\vert ^{2} &=&\frac{3\left\langle \tau
_{7},v\right\rangle ^{2}\left( 3M^{2}-10M+51\right) }{\left(
7M^{2}-66M-9\right) M}-\frac{4}{3}\frac{\tau _{1}\left\langle \tau
_{7},v\right\rangle \left( M+9\right) ^{2}}{\left( 7M^{2}-66M-9\right) }
\label{t17tt1t7sq1} \\
&&+\frac{2}{9}\frac{\tau _{1}^{2}M\left( M+9\right) ^{2}}{7M^{2}-66M-9} 
\notag
\end{eqnarray}%
Further, we can consider the vector equations $\xi _{7}^{d}=0$, $\varphi
_{abc}v^{b}\xi _{7}^{c}=0,$ $\left( \xi _{27}\right) _{mn}v^{n}=0$ and $%
\varphi _{abc}$ $\left( \xi _{27}\right) _{\ \ n}^{b}v^{n}v^{c}=0.$ From
these, in particular, we find 
\begin{equation}
\tau _{7}=\frac{\left\langle \tau _{7},v\right\rangle }{M}v  \label{t17tt1v}
\end{equation}%
So as before, we get 
\begin{equation}
\left\vert \tau _{7}\right\vert ^{2}=\frac{\left\langle \tau
_{7},v\right\rangle ^{2}}{M}  \label{t17tt1t7sq2}
\end{equation}%
Now if we equate (\ref{t17tt1t7sq1}) and (\ref{t17tt1t7sq2}), and then solve
for $\left\langle \tau _{7},v\right\rangle $, we obtain an expression for $%
\left\langle \tau _{7},v\right\rangle $ in terms of $\tau _{1}$, $\tau _{7}$
and $v$.%
\begin{equation}
\left\langle \tau _{7},v\right\rangle =\frac{M\tau _{1}}{3}
\label{t17tt1tt1sol}
\end{equation}%
Hence, 
\begin{equation}
v=\frac{3}{\tau _{1}}\tau _{7}.  \label{t17tt0tau7sol}
\end{equation}%
and, 
\begin{subequations}
\label{t17tt0MVsols}
\begin{eqnarray}
M &=&\frac{9}{\tau _{1}^{2}}\left\vert \tau _{7}\right\vert ^{2} \\
\left\langle \tau _{7},v\right\rangle &=&\frac{3}{\tau _{1}}\left\vert \tau
_{7}\right\vert ^{2}
\end{eqnarray}

Next, from equations $\left( \xi _{27}\right) _{ab}=0,$ $\varphi _{\ \ \
(a}^{cd}\left( \xi _{27}\right) _{b)d}v_{c}=0$ and $\varphi _{a}^{\ \
cd}\varphi _{b}^{\ \ \ ef}v_{c}v_{e}\left( \xi _{27}\right) _{df}=0$, we
finally obtain an expression for $\nabla _{a}\left( \tau _{7}\right) _{b}$.
Using (\ref{t17tt0tau7sol}) and (\ref{t17tt0MVsols}) to completely eliminate 
$v$ from the resulting expression, we overall get: 
\end{subequations}
\begin{equation}
\nabla \tau _{7}=\left( \frac{1}{3}\tau _{1}^{2}-\left\vert \tau
_{7}\right\vert ^{2}\right) g+5\tau _{7}\otimes \tau _{7}  \label{t17tt1dt7}
\end{equation}%
By first considering the trace of this, we find that we get the condition 
\begin{equation}
\nabla _{a}\tau _{7}^{a}+2\left( \tau _{7}\right) _{a}\left( \tau
_{7}\right) ^{a}-\frac{7}{3}\tau _{1}^{2}=0  \label{pi1condt7t1}
\end{equation}%
Recall however, that a $G_{2}$-structure in the strict torsion class $%
W_{1}\oplus W_{7}$ has 
\begin{equation*}
\tau _{7}=\nabla \left( \log \tau _{1}\right)
\end{equation*}%
So we can rewrite (\ref{pi1condt7t1}) as 
\begin{equation}
\nabla ^{2}\left( \log \tau _{1}\right) +2\left\vert \nabla \left( \log \tau
_{1}\right) \right\vert ^{2}-\frac{7}{3}\tau _{1}^{2}=0
\end{equation}%
Now note that if we let $F=\tau _{1}^{2}$, then%
\begin{equation*}
\nabla ^{2}F=\frac{14}{3}F^{2}
\end{equation*}%
Multiplying by $F$, integrating over the whole manifold $M$, and applying
Stokes's Theorem (since $M$ is closed), we get 
\begin{equation}
\int_{M}F\left( \nabla ^{2}F\right) \mathrm{vol}=-\int_{M}\left\vert \nabla
F\right\vert ^{2}\mathrm{vol}=\frac{14}{3}\int_{M}F^{3}\sqrt{\det g}\mathrm{%
vol}
\end{equation}%
However, $F=\tau _{1}^{2}$ is a positive function, so the right-hand side is
non-negative, while the left-hand side is non-positive, and we can only have
equality when both sides vanish. This happens only if $F=0$ and this implies
that both $\tau _{1}$ and $\tau _{7}$ vanish. Therefore we cannot have a
deformation from $W_{1}\oplus W_{7}$ into $W_{0}$.
\end{proof}

\begin{corollary}
Given a deformation $\varphi \longrightarrow \varphi +v^{e}\psi _{bcde}$ of
a torsion-free $G_{2}$-structure $\varphi $, the torsion of the new $G_{2}$%
-structure $\tilde{\varphi}$ will necessarily have a non-trivial component
in $W_{14}$ or $W_{27}$ unless $\nabla v=0$.
\end{corollary}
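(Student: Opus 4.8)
The plan is to prove the contrapositive: assuming the deformed torsion has no $W_{14}$ and no $W_{27}$ component, that is $\tilde{\tau}_{14}=0$ and $\tilde{\tau}_{27}=0$, I would deduce $\nabla v=0$. The key idea is not to attack the explicit formulas (\ref{tTilde14}) and (\ref{tTilde27}) head-on, but to exploit the invertibility of the $\Lambda_{7}^{3}$-deformation established in Lemma \ref{vtildelem} together with the non-existence result of Theorem \ref{Thmt17tt0}, reading the latter with the roles of $\varphi$ and $\tilde{\varphi}$ exchanged.

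First I would record that $\tilde{\tau}_{14}=0$ and $\tilde{\tau}_{27}=0$ say exactly that the full torsion $\tilde{T}$ of $\tilde{\varphi}$ lies in the class $\tilde{W}_{1}\oplus\tilde{W}_{7}$. Next, by Lemma \ref{vtildelem} the deformation is invertible: the vector $\tilde{v}^{a}=-(1+M)^{-2/3}v^{a}$ determines a $\tilde{\Lambda}_{7}^{3}$-deformation $\tilde{\varphi}\mapsto\tilde{\varphi}+\tilde{v}^{e}\tilde{\psi}_{bcde}$ whose result is precisely the original $\varphi$. Since $\varphi$ is torsion-free, I now regard $\tilde{\varphi}$ as the base $G_{2}$-structure: it carries torsion in $\tilde{W}_{1}\oplus\tilde{W}_{7}$, and the above $\Lambda_{7}$-deformation by $\tilde{v}$ produces a torsion-free structure. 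This is exactly the hypothesis of Theorem \ref{Thmt17tt0} applied to $\tilde{\varphi}$, whose conclusion forces both $\tilde{T}=0$ and $\tilde{\nabla}\tilde{v}=0$. Finally, part (3) of Lemma \ref{vtildelem} gives $\tilde{\nabla}\tilde{v}=0\iff\nabla v=0$, so $\nabla v=0$, completing the contrapositive.

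The point most in need of care --- and what I expect to be the conceptual obstacle --- is the justification that a purely algebraic, pointwise argument cannot suffice, so that invoking the global Theorem \ref{Thmt17tt0} is genuinely necessary. In the torsion-free case the four output components $(\tilde{\tau}_{1},\tilde{\tau}_{7},\tilde{\tau}_{14},\tilde{\tau}_{27})$ depend linearly on the decomposition $(v_{1},v_{7},v_{14},v_{27})$ of $\nabla v$, a map between $49$-dimensional spaces which by Corollary \ref{corrt0t0} is in fact a pointwise linear isomorphism; imposing only $\tilde{\tau}_{14}=0$ and $\tilde{\tau}_{27}=0$ is $14+27=41$ linear conditions, so its solution space is at least $8$-dimensional (matching the freedom left in the unconstrained $\tilde{\tau}_{1},\tilde{\tau}_{7}$). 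Hence $\nabla v=0$ cannot follow from (\ref{tTilde14})--(\ref{tTilde27}) alone, and the differential constraint (\ref{dvcond}) together with integration over the closed, compact $M$ --- exactly the content packaged inside Theorem \ref{Thmt17tt0} --- is indispensable. For this reason I would also flag that the corollary tacitly inherits the closed-and-compact hypothesis of Theorem \ref{Thmt17tt0}. A direct route through (\ref{tTilde14}) and (\ref{tTilde27}), applying $d^{2}v^{\flat}=0$ and Stokes's theorem as in the proof of Theorem \ref{Thmt17tt0}, would also work but essentially re-derives that theorem, so the invert-and-swap argument is the economical one.
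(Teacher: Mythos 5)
Your proof is correct and follows essentially the same route as the paper's own: decompose the hypothesis as $\tilde{T}\in \tilde{W}_{1}\oplus \tilde{W}_{7}$, invert the deformation via Lemma \ref{vtildelem}, apply Theorem \ref{Thmt17tt0} to the reversed deformation $\tilde{\varphi}\mapsto \tilde{\varphi}+\tilde{v}^{e}\tilde{\psi}_{bcde}$, and conclude $\nabla v=0$ from part (3) of that lemma. Your two supplementary remarks --- that the corollary tacitly inherits the closed-and-compact hypothesis of Theorem \ref{Thmt17tt0}, and that the pointwise linear-algebra count (an $8$-dimensional solution space for the $41$ algebraic conditions $\tilde{\tau}_{14}=\tilde{\tau}_{27}=0$) shows a purely algebraic argument cannot suffice --- are both sound and make explicit what the paper leaves implicit.
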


\begin{proof}
Suppose the vector $v$ defines a deformation a torsion-free $G_{2}$%
-structure results in a new $G_{2}$-structure with torsion $\tilde{T}$ lying
in $W_{1}\oplus W_{7}$, that is, there is no $W_{14}$ or $W_{27}$ component.
Then by Lemma \ref{vtildelem}, there exists a corresponding deformation,
defined by vector $\tilde{v}$, in the opposite direction from $W_{1}\oplus
W_{7}$ to a torsion-free $G_{2}$-structure. However by Theorem \ref%
{Thmt17tt0}, such a deformation exists if and only if $\tilde{\nabla}\tilde{v%
}=0$ (and equivalently, by Lemma \ref{vtildelem} $\nabla v=0$) and the
torsion $\tilde{T}$ vanishes.
\end{proof}

\begin{theorem}
There is no deformation of the form (\ref{lambda7deform}) within the strict
torsion class $W_{1}$.
\end{theorem}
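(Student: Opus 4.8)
The plan is to feed the two strictness hypotheses into Theorem~\ref{ThmFullDeform}. Assume $\varphi$ lies in the strict class $W_{1}$, so that $\tau_{7}=\tau_{14}=\tau_{27}=0$ with only $\tau_{1}$ surviving, and suppose for contradiction that the deformed form $\tilde{\varphi}$ also lies strictly in $W_{1}$, i.e. $\tilde{\tau}_{7}=\tilde{\tau}_{14}=\tilde{\tau}_{27}=0$ while $\tilde{\tau}_{1}\neq 0$. With all these components zero the system (\ref{v1sol})--(\ref{v27sol}) collapses: every term of (\ref{v14sol}) carries a factor of one of the vanishing components, so $v_{14}=0$; equation (\ref{v7sol}) reduces to $(v_{7})^{c}=\mu\,v^{c}$ with $\mu=-\tfrac{1}{3}\tau_{1}+\tfrac{4}{3}(1+M)^{-1/3}\tilde{\tau}_{1}$; and (\ref{v1sol}), (\ref{v27sol}) give $v_{1}=\alpha$ and $(v_{27})_{ab}=\theta\bigl(v_{a}v_{b}-\tfrac{M}{7}g_{ab}\bigr)$, where $\alpha=\tau_{1}-\tfrac{7+3M}{7}(1+M)^{-1/3}\tilde{\tau}_{1}$ and $\theta=4(1+M)^{-1/3}\tilde{\tau}_{1}$. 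Hence $\nabla_{a}v_{b}=\alpha g_{ab}+\mu\,v^{c}\varphi_{cab}+\theta\bigl(v_{a}v_{b}-\tfrac{M}{7}g_{ab}\bigr)$ is completely explicit in $v$ and the two scalars $\tau_{1},\tilde{\tau}_{1}$.

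Next I would record two structural facts. A strict $W_{1}$ structure satisfies $\nabla\tau_{1}=0$ (the $\tau_{7}=\tau_{27}=0$ case of (\ref{dtabcond2c}), i.e. the $W_{1}$ row of the torsion table), and for the scalar $\tilde{\tau}_{1}$ likewise $\nabla\tilde{\tau}_{1}=0$; since $\tau_{1}$ and $\tilde{\tau}_{1}$ are then constants, the scalars $\alpha,\mu,\theta$ are functions of $M=|v|^{2}$ alone. Moreover $v_{14}=0$ turns (\ref{dv1}) into $dv^{\flat}=2\mu\,(v\lrcorner\varphi)$, and contracting $\nabla_{a}v_{b}$ with $v^{b}$ gives $\nabla_{a}M=\rho\,v_{a}$, where $\rho=2\alpha+\tfrac{12}{7}\theta M$ (the $\varphi$-term drops out by antisymmetry).

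The contradiction then comes from two consistency requirements, neither needing compactness. First, symmetry of the Hessian of the scalar $M$: since $\nabla_{b}M=\rho v_{b}$ with $\rho=\rho(M)$, one has $\nabla_{a}\nabla_{b}M=\rho'\rho\,v_{a}v_{b}+\rho\,\nabla_{a}v_{b}$, whose only antisymmetric part is $\rho\mu\,v^{c}\varphi_{cab}$; as $v\lrcorner\varphi\neq 0$ for $v\neq 0$, this forces $\rho\mu=0$. Second, the necessary condition (\ref{dvcond}) is just $d^{2}v^{\flat}=0$; projecting onto $\Lambda_{1}^{3}$ and using $\nabla_{a}v^{a}=7\alpha$ together with $\psi_{adbc}\varphi^{abc}=0$ (so that the torsion contribution $\nabla_{a}\varphi_{bcd}=\tau_{1}\psi_{abcd}$ is annihilated) gives $\pi_{1}\bigl(d(v\lrcorner\varphi)\bigr)=3\alpha\,\varphi$, hence $\pi_{1}(d\,dv^{\flat})=6\mu\alpha\,\varphi=0$ whenever $M$, and therefore $\mu$, is constant. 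Now split on $\rho\mu=0$. If $\mu=0$ then $(1+M)^{-1/3}\tilde{\tau}_{1}=\tfrac{1}{4}\tau_{1}$ is constant, so $M$ is constant and $\nabla M=0$ forces $\rho=0$; but substituting $\mu=0$ yields $\rho=\tfrac{3}{2}\tau_{1}(1+M)\neq 0$ (for $\tau_{1}\neq 0$), a contradiction. If instead $\rho=0$ with $\mu\neq 0$, then $M$ is constant and $\mu\alpha=0$ forces $\alpha=0$, whence $\rho=\tfrac{12}{7}\theta M=0$ gives $\theta=0$, i.e. $\tilde{\tau}_{1}=0$; but then $\tilde{T}=0$ and $\tilde{\varphi}$ is torsion-free, contradicting strict membership in $W_{1}$. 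This exhausts the cases, so no deformation of the form (\ref{lambda7deform}) can stay within the strict class $W_{1}$; the argument runs parallel to the $W_{1}\to W_{0}$ step of Theorem~\ref{Thmt17tt0}.

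The hard part is not conceptual but bookkeeping: correctly reducing (\ref{v1sol})--(\ref{v27sol}) to the explicit coefficients $\alpha,\mu,\theta$ and thence to $\rho$, and verifying the single projection identity $\pi_{1}\bigl(d(v\lrcorner\varphi)\bigr)=3\alpha\,\varphi$ through the contraction identities of Proposition~\ref{propcontractions} (the crux being $\psi_{adbc}\varphi^{abc}=0$, which kills the torsion contribution and keeps the computation curvature-free). Once these explicit scalars are in hand, the two requirements $\rho\mu=0$ and $\mu\alpha=0$ close the argument by the short finite case check above, so I expect the reduction of the inverted equations, rather than the final logic, to absorb most of the effort.
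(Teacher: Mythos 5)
Your proposal is correct, and it reaches the theorem by a genuinely lighter route than the paper's. The starting point is the same: feeding the strict hypotheses into Theorem \ref{ThmFullDeform} collapses $\nabla v$ to the paper's (\ref{delvt1t1}), which is precisely your $\alpha\, g_{ab}+\mu\, v^{c}\varphi_{cab}+\theta\left(v_{a}v_{b}-\tfrac{M}{7}g_{ab}\right)$ (your traceless reading of (\ref{v27sol}) is the right one, since recombining it reproduces (\ref{delvt1t1})). From there the paper computes all of $d\left(dv^{\flat}\right)$, projects onto $\Lambda_{1}^{3}$, $\Lambda_{7}^{3}$ and $\Lambda_{27}^{3}$ to get (\ref{t1t1ddv}), contracts with $v$, and finishes in one stroke of linear algebra: a $3\times 3$ system in $\tau_{1}^{2}$, $\tau_{1}\tilde{\tau}_{1}$, $\tilde{\tau}_{1}^{2}$ with determinant $\tfrac{32}{21}M^{2}/(1+M)>0$, forcing $\tau_{1}=\tilde{\tau}_{1}=0$ --- no case analysis, but heavy (Maple-assisted) projection work. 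You instead extract only two cheap consequences: symmetry of the Hessian of $M$ (i.e.\ $d^{2}M=0$), giving $\rho\mu=0$, and the single $\pi_{1}$ projection of (\ref{dvcond}), giving $\mu\alpha=0$ where $M$ is locally constant; your identity $\pi_{1}\left(d\left(v\lrcorner\varphi\right)\right)=3\alpha\varphi$ is the same computation the paper performs inside the proof of Theorem \ref{Thmt17tt0} (there with $\alpha=\tau_{1}$), and your appeal to $d\tau_{1}=d\tilde{\tau}_{1}=0$ is legitimate and in fact implicitly needed by the paper as well, since (\ref{t1t1ddv}) contains no derivative terms. Two points to tighten in a write-up: the relation $\rho\mu=0$ holds pointwise on the open set where $v\neq 0$, so your two cases should be phrased as ``$\mu$ vanishes identically there'' versus ``$\mu\neq 0$ at some point, hence on a neighbourhood, on which $\rho\equiv 0$ and $M$ is locally constant'' --- this is exactly what each branch of your argument uses, and a one-line continuity/density remark makes it airtight; and, like the paper, you tacitly exclude the trivial deformation $v\equiv 0$, at which both your dichotomy and the paper's determinant degenerate. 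What your route buys is an essentially hand-checkable proof that bypasses the $\pi_{7}$ and $\pi_{27}$ projections entirely; what the paper's buys is a single uniform argument with no case split.
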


\begin{proof}
We consider a $G_{2}$-structure $\left( \varphi ,g\right) $ where the only
non-zero component of torsion $T$ is $\tau _{1}$. Suppose (\ref%
{lambda7deform}) gives a deformation to a $G_{2}\,$-structure $\left( \tilde{%
\varphi},\tilde{g}\right) $ with torsion $\tilde{T}$ with the only non-zero
component being $\tilde{\tau}_{1}$. Then from Theorem \ref{ThmFullDeform},%
\begin{equation}
\nabla _{a}v_{b}=\left( \tau _{1}-\left( 1+M\right) ^{\frac{2}{3}}\tilde{\tau%
}_{1}\right) g_{ab}+4\left( 1+M\right) ^{-\frac{1}{3}}\tilde{\tau}%
_{1}v_{a}v_{b}-\frac{1}{3}v^{c}\varphi _{cab}\left( \tau _{1}-4\left(
1+M\right) ^{-\frac{1}{3}}\tilde{\tau}_{1}\right)  \label{delvt1t1}
\end{equation}%
and in particular, 
\begin{equation}
dv^{\flat }=-\frac{2}{3}v\lrcorner \varphi \left( \tau _{1}-4\left(
1+M\right) ^{-\frac{1}{3}}\tilde{\tau}_{1}\right)
\end{equation}

Then we take $d\left( dv^{\flat }\right) $, and decompose it into $\Lambda
_{1}^{3}$, $\Lambda _{7}^{3}$ and $\Lambda _{27}^{3}$ components. Since $%
d\left( dv^{\flat }\right) $ must vanish, so must each of these components.
We hence get the following equations: 
\begin{subequations}
\label{t1t1ddv}
\begin{eqnarray}
0 &=&\tau _{1}^{2}-\frac{1}{21}\frac{\left( 9M^{2}+106M+105\right) \tau _{1}%
\tilde{\tau}_{1}}{\left( 1+M\right) ^{\frac{4}{3}}}+\frac{4}{21}\frac{\left(
15M^{2}+21+28M\right) \tilde{\tau}_{1}^{2}}{\left( 1+M\right) ^{\frac{5}{3}}}
\label{t1t1pi1ddv} \\
0 &=&\left( \tau _{1}^{2}-5\frac{\tau _{1}\tilde{\tau}_{1}}{\left(
1+M\right) ^{\frac{1}{3}}}+\frac{4\tilde{\tau}_{1}^{2}}{\left( 1+M\right) ^{%
\frac{2}{3}}}\right) v^{a}  \label{t1t1pi7ddv} \\
0 &=&\left( \tau _{1}^{2}-\frac{1}{27}\frac{\left( 15M^{2}+142M+135\right)
\tau _{1}\tilde{\tau}_{1}}{\left( 1+M\right) ^{\frac{4}{3}}}+\frac{4}{27}%
\frac{\left( 21M^{2}+40M+27\right) \tilde{\tau}_{1}^{2}}{\left( 1+M\right) ^{%
\frac{5}{3}}}\right) g_{ab}  \label{t1t1pi27ddv} \\
&&+\left( \frac{8}{27}\frac{\left( 3M+5\right) \tau _{1}\tilde{\tau}_{1}}{%
\left( 1+M\right) ^{\frac{4}{3}}}-\frac{16}{27}\frac{\left( 3M+7\right) 
\tilde{\tau}_{1}^{2}}{\left( 1+M\right) ^{\frac{5}{3}}}\right) v_{a}v_{b} 
\notag
\end{eqnarray}%
Now if we contract (\ref{t1t1pi7ddv}) with $v_{a}$ and (\ref{t1t1pi27ddv})
with $v^{a}v^{b}$, we get three scalar equations: 
\end{subequations}
\begin{eqnarray}
0 &=&\tau _{1}^{2}-\frac{1}{21}\frac{\left( 9M^{2}+106M+105\right) \tau _{1}%
\tilde{\tau}_{1}}{\left( 1+M\right) ^{\frac{4}{3}}}+\frac{4}{21}\frac{\left(
15M^{2}+21+28M\right) \tilde{\tau}_{1}^{2}}{\left( 1+M\right) ^{\frac{5}{3}}}
\\
0 &=&\tau _{1}^{2}-5\frac{\tau _{1}\tilde{\tau}_{1}}{\left( 1+M\right) ^{%
\frac{1}{3}}}+\frac{4\tilde{\tau}_{1}^{2}}{\left( 1+M\right) ^{\frac{2}{3}}}
\\
0 &=&\tau _{1}^{2}+\frac{1}{9}\frac{\left( 3M^{2}-34M-45\right) \tau _{1}%
\tilde{\tau}_{1}}{\left( 1+M\right) ^{\frac{4}{3}}}+\frac{4}{9}\frac{\left(
3M^{2}+4M+9\right) \tilde{\tau}_{1}^{2}}{\left( 1+M\right) ^{\frac{5}{3}}}
\end{eqnarray}%
Here our unknowns are $\tau _{1}^{2}$, $\tau _{1}\tilde{\tau}_{1}$ and $%
\tilde{\tau}_{1}^{2}$. The determinant of the system is $\frac{32}{21}\frac{%
M^{2}}{1+M}>0,$ so the only solution is $\tau _{1}=\tilde{\tau}_{1}=0$.
\end{proof}

\section{Concluding remarks}

In this paper we have studied the deformations of $G_{2}$ structures on $7$%
-manifolds. Given a general deformation $\chi $ of a $G_{2}$-structure $%
\left( \varphi ,g\right) $, we obtained a new $G_{2}$-structure $\left( 
\tilde{\varphi},\tilde{g}\right) $, and for this new $G_{2}$-structure we
calculated its torsion tensor $\tilde{T}$ in terms of the old $G_{2}$%
-structure $\left( \varphi ,g\right) $, its torsion $T$ and the deformation $%
\chi $. We then specialized to $\chi $ lying in $\Lambda _{7}^{3}$, given by 
$\chi =-v\lrcorner \psi $. For such a deformation, in Theorem \ref{ThmvTors}%
, we computed $\tilde{T}$ in terms of $v$ and its derivatives. So then,
given a $G_{2}$-structure $\left( \varphi ,g\right) $ with particular
torsion components $\tau _{i}$, we found the equations that $v$ must satisfy
so that the the torsion of $\left( \tilde{\varphi},\tilde{g}\right) $ has
particular components $\tilde{\tau}_{i}$. For particular cases of $\tau _{i}$
and $\tilde{\tau}_{i}$, we analysed these equations. In particular, we found
that a deformation within the zero torsion class is possible if and only if $%
\nabla v=0$. In other cases, it was found that there are no deformations of
this type that take the torsion from the classes $W_{1},$ $W_{7}$ and $%
W_{1}\oplus W_{7}$ to zero. Also, it was found that there are no
deformations which take the $W_{1}$ torsion class to itself. Although these
are all mostly negative results, since deformations in $\Lambda _{7}^{3}$
are invertible, we can conclude that any such deformation with non-zero $%
\nabla v$ from a torsion-free $G_{2}$-structure will yield a $G_{2}$%
-structure whose torsion contains at least a $W_{14}$ or $W_{27}$ component.

So far we have developed a technique for computing the deformed torsion, and
of course there is a significant amount of work to be done to fully
understand deformations of other torsion classes. Deformations that lie in $%
\Lambda _{7}^{3}$ are of course the simplest possible deformations, apart
from conformal deformations, since they are defined by just a vector. Also,
as we note in Section \ref{seclam7deform}, such a deformation of a positive $%
3$-form still yield a positive $3$-form, which does define a $G_{2}$%
-structure. The ultimate aim would be to make sense of non-infinitesimal
deformations that lie in $\Lambda _{27}^{3}$. These are then defined by
traceless symmetric tensors, and moreover, not all such deformations yield
positive $3$-forms, so extra conditions need to be imposed. On the other
hand, these deformations have many more degrees of freedom than the $\Lambda
_{7}^{3}$ deformations, so we could expect to get more interesting results
and unlock many of the mysteries of $G_{2}$ manifolds. In particular, one of
the aims would be to show if there are any obstructions to deformations of $%
G_{2}$ holonomy manifolds. A even more ambitious program would be to try and
understand which $G_{2}$-structures exist on a given manifold and what is
the smallest torsion class.

\bibliographystyle{jhep-a}
\bibliography{refs2}

\end{document}